\documentclass[english, 11pt]{amsart} %Xtop

\usepackage{cite}
\usepackage{amsmath}
\usepackage{amsfonts}
\usepackage{amssymb}
\usepackage{ae}
\usepackage{mathrsfs,array,graphicx}
\usepackage[all,ps]{xy}
\usepackage{hyperref}

\usepackage{note}
\usepackage{a4wide}
\usepackage{std_macros}
\linespread{1.2}
\newcommand{\Pol}{\textup{Pol}}
\newcommand{\Ram}{\textup{Ram}^+}
\newcommand{\Unr}{\textup{Unr}^+}
\renewcommand{\G}{\mathbb{G}} 
\renewcommand{\Gm}{\mathbb{G}_\textup{m}}
\renewcommand{\D}{\mathbb{D}}

%% Dingen om over na te denken:
%%  
%%   --->    h^-1
%%   --->    Compact trace is stable under base change?
%%

\title{The basic stratum of some simple Shimura varieties}

\author{Arno Kret}

\begin{document}

\begin{abstract}
Under simplifying hypotheses we prove a relation between the $\ell$-adic cohomology of the basic stratum of a Shimura variety of PEL-type modulo a prime of good reduction of the reflex field and the cohomology of the complex  Shimura variety. In particular we obtain explicit formulas for the number of points in the basic stratum over finite fields. We obtain our results using the trace formula and truncation of the formula of Kottwitz for the number of points on a Shimura variety over a finite fields. 
\end{abstract}

\maketitle

\section*{Introduction} 

Let $S$ be a Shimura variety of PEL-type. These Shimura varieties are moduli spaces of Abelian varieties with additional PEL-type structures. Consider the reduction modulo $p$ of $S$ and, for each point $x$ in $S$ modulo $p$ consider the rational Dieundonn\' module $\D(\cA_x)_\Q$ of the Abelian variety $\cA_x$ corresponding to $x$. These Dieundonn\'e modules are isocrystals, and we are interested in them up to isomorphism. When $x$ ranges over the points of $S$ modulo $p$, the isomorphism class of $\D(\cA_x)_\Q$ ranges over a certain \emph{finite} set $B(G_{\qp}, \mu)$ of possibilities. For $b$ in $B(G_{\qp})$, let $S_b \subset S$ be the closed subvariety consisting precisely of those Abelian varieties whose isocrystal is equal to $b$. The collection $\{S_b\}_{b \in B(G_{\gq}, \mu)}$ of subvarieties of $S$ is called the \emph{Newton stratification} of $S$. We study the action of the Frobenius operator on the $\ell$-adic cohomology of the varieties $S_b$, and, additionally, the action of the Hecke algebra of $G$ on these spaces. 
 
In this article we consider a restricted class of certain simple PEL type Shimura varieties, and take $b$ to be the \emph{basic} element of $B(G_{\qp}, \mu)$. We establish a relation between the cohomology of the corresponding \emph{basic stratum} of the Shimura variety $S$ modulo $p$ and the space of automorphic forms on $G$. We show that the space of automorphic forms completely describes the cohomology of the basic stratum as Hecke module, as well as the action Frobenius element.

Let us now give the precise statement of the main result. Let $D$ be a division algebra over $\Q$ equipped with an anti-involution $*$. Write $F$ for the center of $D$. We assume that $F$ is a CM field and that $*$ induces complex conjugation on $F$ and that $D \neq F$. We assume that $F$ is a compositum of a quadratic imaginary extension $\cK$ of $\Q$ and a totally real subfield $F^+$ of $F$. We pick a morphism $h_0$ of $\R$-algebras from $\C$ to $D_\R$ such that $h_0(z)^* = h_0(\li z)$ for all complex numbers $z$, and we assume that the involution $x \mapsto h_0(i)^{-1} x^* h_0(i)$ on $D_\R$ is positive. Then $(D, h)$ induces a Shimura datum $(G, X, h^{-1})$.  Let $K \subset G(\Af)$ be a compact open subgroup of $G$ and let $p$ be a prime number such that $\Sh_K$ has good reduction at $p$ (in the sense of \cite[\S 6]{MR1124982}) and such that $K$ decomposes into a product $K_p K^p$ where $K_p \subset G(\qp)$ is hyperspecial and $K^p$ is sufficiently small so that $\Sh_K$ is smooth. Let $\Sh_K$ be the Shimura variety which represents the corresponding moduli problem of Abelian varieties of PEL-type as defined in \cite{MR1124982}. We write $\cA(G)$ for the space of automorphic forms on $G$. Let $\xi$ be a irreducible complex algebraic representation of $G(\C)$, and let $f_\infty$ be the Clozel-Delorme function at infinity corresponding to the representation $\xi$. We assume that the prime number $p$ splits in the field $\cK$. Let $B$ be the basic stratum of the reduction of the variety $\Sh_K$ modulo a prime $\p$ of the reflex field lying above $p$, and let $\fq$ be the residue field of $E$ at $\p$. We write $\Phi_\p \in \Gal(\lfq/\fq)$ for the geometric Frobenius $x \mapsto x^{q^{-1}}$. Let $\cL$ be the restriction to $B_{\lfq, \et}$ of the $\ell$-adic local system associated to $\xi$ on $\Sh_{K, \lfq, \et}$ \cite[\S 6]{MR1124982}. Let $f^p$ be a $K^p$-spherical Hecke operator in $\cH(G(\Af^p))$, where $\Af^p$ is the ring of finite ad\`eles with trivial component at $p$. Finally we work under a technical simplifying condition on the geometry of the variety $B$. 
Let $b \in B(G_{\qp}, \mu)$ be the isocrystal with additional $G$-structure corresponding to the basic stratum. The group $G(\Qp)$ is equal to $\qp^\times \times \Gl_n(F^+ \otimes \Qp)$ and the set $B(G_{\qp})$ decomposes along the irreducible factors of the algebra $F^+ \otimes \Qp$. Therefore we have for each $F^+$-place $\wp$ above $p$ an isocrystal $b_\wp \in B(\Gl_n(F_\wp^+))$. The condition on $b$ is that the only slope in $b_{\wp}$ which could occur with multiplicity $>1$ is the slope $0$, \ie we assume that for each $F^+$-place $\wp$ that if  $b_\wp$ is \'etale or simple as object in the category of isocrystals with additional $\Gl_n(F_\wp^+)$-structure. Under these conditions we have the following theorem 

\begin{theorem}[see Theorem \ref{maintheorem}]
The trace of the correspondence $f^p \times \Phi_\p^\alpha$ acting on the alternating sum of the cohomology spaces
$\sum_{i=0}^\infty (-1)^i \uH_{\et}^i(B_{\lfq}, \iota^* \cL)$ is equal to 
\begin{equation}\label{introBB}
P(q^\alpha) \lhk \sum_{\bo{\pi \subset \cA(G)}{\dim(\pi) = 1,\pi_p=\textup{unr.}}}  \zeta_{\pi}^{\alpha} \cdot \Tr(f^p, \pi^p) + \eps \sum_{\bo{\pi \subset \cA(G)}{\pi_p=\textup{ St. type}}} \zeta_{\pi}^{\alpha} \cdot \Tr(f^p, \pi^p)\rhk. 
\end{equation}
for all positive integers $\alpha$. The condition that $\pi_p$ is of Steinberg type means that for every $F^+$-place $\wp$ above $p$ that 
\begin{enumerate}
\item if the component of the basic isocrystal at $\wp$ is not \'etale (\ie has non-zero slopes),  then $\pi_\wp$ is a twist by an unramified character  of the Steinberg representation of $\Gl_n(F^+_\wp)$; 
\item if the component at $\wp$ is \'etale (all slopes are $0$), then the representation $\pi_\wp$ is unramified and generic. 
\end{enumerate}
The symbol $\eps \in \Z^\times$  in Equation \eqref{introBB} is a sign equal to $(-1)^{(n-1)\#\Ram_p}$ where $\Ram_p$ is the set of $F^+$-places $\wp$ dividing $p$ such that the isocrystal $b_\wp$ is not \'etale. The number $\zeta_{\pi}$ is a certain Weil-$q$-numbers whose weight depends on $\xi_\C$ (see Lemma \ref{weilqnumber} for the precise statement). The symbol $P(q^\alpha)$ is a certain polynomial function, see Definition \ref{polP} and the discussion below this definition.  
\end{theorem}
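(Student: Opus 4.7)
My overall strategy combines Kottwitz's point-counting formula with a Newton-stratum truncation, followed by stabilization of the resulting distribution and base change to identify the contributing automorphic representations. I would begin from Kottwitz's formula, which expresses the Lefschetz number $\sum_i (-1)^i \Tr(f^p \times \Phi_\p^\alpha, \uH_{\et}^i(\Sh_{K,\lfq}, \cL))$ as a sum over Kottwitz triples $(\gamma_0;\gamma,\delta)$ of a product of an orbital integral of $f^p$ in $G(\Af^p)$, a twisted orbital integral at $p$ against the standard Kottwitz function, an orbital integral of $f_\infty$ at infinity, and sign/volume factors. The isocrystal invariant $b(\delta) \in B(G_{\qp},\mu)$ of $\delta$ stratifies this sum, and to pass from $\Sh_K$ to the basic stratum $B$ I would restrict to triples with $b(\delta)$ equal to the basic element; the technical geometric hypothesis on $B$ in the statement is precisely what ensures that this truncation computes the Lefschetz number of the restricted correspondence on $B$ itself.

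Second, the key local input at $p$ is a spectral identification of the truncated test function. Since $G(\Qp) = \Qp^\times \times \Gl_n(F^+\otimes\Qp)$, the computation factors over $F^+$-places $\wp$ above $p$. Under the hypothesis that each non-\'etale $b_\wp$ is a simple isocrystal, the relevant twisted orbital integrals at $\wp$ match, via Jacquet-Langlands and local base change, the trace of an unramified Hecke function against an unramified twist of the Steinberg representation of $\Gl_n(F^+_\wp)$, with the sign $(-1)^{n-1}$ arising from the Jacquet-Langlands character identity; accumulating the signs over $\wp \in \Ram_p$ gives the global sign $\eps$. At the \'etale places the local factor reduces to an ordinary Satake trace, so no condition beyond unramified genericity is imposed on $\pi_\wp$ there.

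Third, I would stabilize the truncated formula. Because $D$ is a division algebra the endoscopic picture is well controlled, and after Kottwitz's stabilization combined with cyclic base change from $G$ to an appropriate general linear group over $F$, the geometric side matches a spectral sum over $\pi \subset \cA(G)$. The archimedean factor $f_\infty$ selects $\pi$ cohomological with respect to $\xi$ and produces the Weil-$q$-number $\zeta_\pi^\alpha$; the polynomial $P(q^\alpha)$ records the Frobenius trace on the $\xi$-isotypic piece of a Rapoport-Zink type local factor at $p$. Combining the three contributions yields the two sums in \eqref{introBB}: the non-\'etale constraint at each $\wp \in \Ram_p$ forces $\pi_\wp$ to be a Steinberg twist, the \'etale constraint forces $\pi_\wp$ unramified generic, and Jacquet-Langlands rigidity for $D^\times$ combined with the purely unramified constraint at $p$ in the first sum forces those $\pi$ to be one-dimensional.

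The main obstacle I anticipate is the local harmonic-analysis step at $p$: one must prove that the geometric truncation function on $G(\Qp)$ has the same spectral expansion as an explicit Lefschetz-type function picking out exactly the Steinberg and one-dimensional local components, and nothing else. This requires a careful analysis of twisted orbital integrals on simple isocrystal strata (essentially a Lubin-Tate type computation at each $\wp \in \Ram_p$), together with a proof that intermediate endoscopic contributions vanish. The hypothesis that every non-\'etale $b_\wp$ is simple is essential here: it eliminates intermediate Newton strata at each $\wp$ whose local contributions would otherwise obstruct a clean spectral identification and force additional representation types to appear in the formula.
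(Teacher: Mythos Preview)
Your high-level outline (truncate Kottwitz's formula to the basic stratum, stabilize, base change, identify the contributing $\pi$) matches the paper, but the local mechanism at $p$ is not the one that actually works, and you misplace where the simplicity hypothesis on $b_\wp$ enters.

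The key object in the paper is the \emph{compact trace} $\Tr(\chi_c^{G(\qp)} f_\alpha,\pi_p)$, where $\chi_c^{G(\qp)}$ is the characteristic function of compact elements; truncating to the basic stratum on the geometric side is exactly multiplying $f_\alpha$ by $\chi_c^{G(\qp)}$ on the automorphic side. The spectral analysis is then done with Clozel's formula (Proposition~\ref{clozelwaldspurger}), which expresses $\Tr(\chi_c^G f,\pi)$ as an alternating sum over standard parabolics of $\Tr_M(\widehat\chi_N f^{(P)},\pi_N(\delta_P^{-1/2}))$. The coprimality hypothesis $(n,s_\wp)=1$ is used not to make the geometric truncation valid, and not to ``eliminate intermediate Newton strata'', but to force the vanishing $\chi_c^G f^{(P)}=0$ for every proper parabolic $P$ (Lemma~\ref{constantvanish} and Proposition~\ref{chicfp}). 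Combined with the Moeglin--Waldspurger classification of semi-stable components of discrete automorphic representations as products of Speh representations, and with Tadic's character formula, this vanishing kills the compact trace on every semi-stable rigid $\pi_\wp$ except $\one$ and $\St$ (Proposition~\ref{vanishunlesssteinberg}). The sign $(-1)^{n-1}$ per place in $\Ram_p$ is not a Jacquet--Langlands character sign; it is $\eps_{P_0}$ from comparing $\Tr(\chi_c^G f,\one)$ with $\Tr(\chi_c^G f,\St)$ in Proposition~\ref{cttrivial}. Likewise $P(q^\alpha)$ is by definition the compact trace on the trivial representation, not a Rapoport--Zink Lefschetz number.

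Your proposed ``main obstacle'' --- matching twisted orbital integrals on the simple isocrystal stratum via a Lubin--Tate computation --- is therefore aimed at the wrong target. What is actually needed is: (i) Clozel's compact-trace identity, (ii) the vanishing of truncated constant terms under coprimality, and (iii) the Moeglin--Waldspurger input restricting the shape of $\pi_\wp$. Finally, the argument that an unramified one-dimensional $\pi_\wp$ forces $\pi$ itself to be one-dimensional is a strong-approximation argument on the derived group (Lemma~\ref{propertyA}), not Jacquet--Langlands rigidity.
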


To give a flavor of the form of the function $P(q^\alpha)$ let us give it in this introduction under two additional simplifying hypotheses. Let $n$ be the positive integer such that $n^2$ is the dimension of the algebra $D$ over the field $F$. By the classification of unitary groups over the real numbers, the group $G(\R)$ induces for each infinite $F^+$-place $v$ a set of non-negative numbers $\{p_v, q_v\}$ with $p_v + q_v =n$. Let us assume in this introduction that $p_v = 0$ for all $v$ except for one infinite $F^+$-place $v_0$. Secondly we assume that $p$ splits completely in the field $F^+$, then there is a polynomial $\Pol \in \C[X]$, such that $P(q^\alpha)$ is equal to $\Pol|_{X = q^\alpha}$. Our condition on the basic isocrystal corresponds to the condition that the number $p_{v_0}$ is coprime to $n$ (see Subsection \S \ref{subsectionisocrystals}). Write $s$ for the signature $p_{v_0}$. The polynomial $P(q^\alpha)$ is equal to the evaluation of the polynomial
\begin{equation}\label{introsimpleformula}
q^{\alpha \frac {s(n-s)}2 } \sum X_{i_1}^\alpha X_{i_2}^\alpha \cdots X_{i_s}^\alpha \in \C[X_1^{\pm 1}, X_2^{\pm 1}, \ldots, X_n^{\pm 1}],
\end{equation}
at the point $X_1 = q^{\frac {1 -n}2}, X_2 = q^{\frac {3 - n}2}, \ldots, X_n = q^{\frac {n -1}2}$. In the sum of Equation \eqref{introsimpleformula} the indices $i_1, i_2, \ldots, i_s$ range over the set $\{1, 2, \ldots, n\}$ and satisfy the conditions
\begin{enumerate}
\item[$\bullet$] $i_1 < i_2 < i_3 < \ldots < i_s$;
\item[$\bullet$] $i_1 = 1$;
\item[$\bullet$] If $s > 1$ there is an additional condition: For each subindex $j \in \{2, \ldots, s\}$ we have $i_j < 1+ \frac ns (j-1)$. 
\end{enumerate}
In the case of Harris and Taylor \cite{MR1876802} the polynomial $\Pol(q^\alpha)$ is equal to $1$ (the basic stratum is then a \emph{finite} variety). 

Let us comment on the strategy of proof of Theorem \ref{maintheorem}. The formula of Kottwitz for Shimura varieties of PEL-type \cite{MR1124982} gives an expression for the number of points over finite fields on these varieties at primes of good reduction. The formula is an essential ingredient for the construction of Galois representations associated to automorphic forms: The formula allows one to compare the trace of the Frobenius operator with the trace of a Hecke operator acting on the cohomology of the variety. In this article we truncate the formula of Kottwitz to only contain the conjugacy classes which are \emph{compact at $p$}. Thus we count virtual Abelian varieties with additional PEL-type structure lying in the \emph{basic stratum}. The stabilization argument of Kottwitz carried out in his Ann Arbor article \cite{MR1044820} still applies because the notion of $p$-compactness is stable under stable conjugacy. After stabilizing we obtain a sum of stable orbital integrals on the group $G(\A)$, which can be compared with the geometric side of the trace formula. Ignoring endoscopy and possible non-compactness of the variety, the geometric side is equal to the \emph{compact} trace $\Tr(\chi_c^{G(\qp)} f, \cA(G))$ as considered by Clozel in his article on the fundamental lemma \cite{MR1068388}. Using base change and Jacquet-Langlands we compare this compact trace with the twisted trace of a certain truncated Hecke operator acting on automorphic representations of the general linear group. We arrive at a local combinatorial problem at $p$ to classify the representations which contribute (rigid representations, Section \ref{sectionrigid}), and the computation of the compact trace of the Kottwitz function on these representations (Section \ref{sectioncombinatorics}), which is a problem that we solve only partially in this article (hence the conditions on the basic isocrystal). 

One expects that a formula such as in Equation \eqref{introsimpleformula} can also be obtained for other Shimura varieties associated to unitary groups. However, such a formula will certainly be more complicated: We already mentioned that the above statements concerning the trace formula are too naive for more general unitary Shimura varieties. If the group $G$ has endoscopy, then the spectral side $\Tr(\chi_c^{G(\qp)} f, \cA(G))$ has to change into a sum of truncated transferred functions acting on automorphic representations of endoscopic groups (cf. \cite{harrisproject} \cite{MR2800722}). If furthermore the Shimura variety is non-compact, then additional problems occur. In the formula of Kottwitz for the number of points on a Shimura variety another sum should be included, indexed by the parabolic subgroups of $G$, and various \'etale cohomology spaces should be changed to intersection cohomology (cf. \cite{MR2567740}).  Leaving these complications for future work, we assume that the Shimura variety is such that the problems do not occur: We work in  the setting of Kottwitz in his Inventiones article \cite{MR818353} where the Shimura variety is associated to a division algebra. In this case the Shimura varieties are compact and there is essentially no endoscopy, hence the truth of the naive formula and hence the word ``simple'' in the title of this article. 

The above formula is established in Section \ref{establishmainformula}. In Subsections \ref{application.points} and \ref{application.dimension} we use the formula to deduce two applications, in the first we express the zeta function of the basic stratum in terms of automorphic data, in the second application we derive a dimension formula for the basic stratum. In the first Section \S 1 we carry out the necessary local computations at $p$ to derive the explicit formula for the polynomial $P(q^\alpha)$ in Equation \eqref{introsimpleformula}. In Section \S 1 we also prove a vanishing result of the truncated constant terms of the Kottwitz function due to the imposed conditions on the basic isocrystal (Proposition \ref{chicfp}). This result is the technical reason for the simplicity of the formula in Theorem \ref{maintheorem}: without the conditions on the basic isocrystal, Equation \eqref{introsimpleformula} is more complicated and involves a larger class of representations at $p$. In Section \S 2 we apply the  Moeglin-Waldspurger Theorem to determine which smooth irreducible representations of the general linear group can occur as components of discrete automorphic representations at finite places of a number field. This result is important for the final argument in Section 3.

We end this introduction with a discussion on upcoming work. We are working on the generalization of our main Theorem \ref{maintheorem} to more general Shimura varieties of PEL type and of type (A). Furthermore the same strategy can be used to study the basic stratum of the variety at primes where the group $G_{\qp}$ is unramified, but the prime $p$ is inert (instead of split) in the quadratic imaginary subfield of $F$. At least at the split primes the non-basic Newton strata $b \in B(G_{\qp}, \mu)$ may be attacked as well using this method. However, explicit results require computation of the Jacquet modules $(\pi_p)_N$ where $P=MN$ is the standard parabolic subgroup of $G(\qp)$ corresponding to the centralizer $M_b$ in $G_{\qp}$ of the slope morphism $\nu_b$ of $b$. These Jacquet modules have not been computed yet in the current literature.

Theorem \ref{maintheorem} has a simplifying condition on the isocrystal in the basic stratum. This is precisely the case where the only representations at $p$ that contribute to the cohomology are essentially\footnote{See Definition \ref{steinbergtypedef} for the precise statement.} the Steinberg and the trivial representation. The argument in Section 3 gives also a result without the condition on the basic isocrystal. In that case  a large class of ``rigid'' representations $\pi_p$ contribute to the basic stratum, and it is not hard to write down precisely which of them contribute (see \S \ref{sectionrigid}). However, a new challenge is to compute the analogue of the function $P(q^\alpha)$ considered above. This computation seems difficult to us and we have not yet obtained a satisfactory formula, but we do have an unsatisfactory formula which could for example be implemented in a (slow) computer program to yield expressions similar to Theorem \ref{maintheorem} for the number of points in more strata. 

We hope to return to the above ideas in forthcoming work. 

\bigskip

\textbf{Acknowledgments}. I thank my thesis advisor Laurent Clozel for all his help and providing me with the idea of truncating the formula of Kottwitz to the basic stratum, combined with his proposition on compact traces and then computing the result using the trace formula. I also thank my second thesis advisor Laurent Fargues for answering some of my geometric questions on Shimura varieties. Furthermore I thank Guy Henniart, Ioan Badulescu, Marko Tadic,  Chunh-Hui Wang for their help with representation theory, Paul-James White for his help on the trace formula,  Gerard Laumon for a discussion on compact traces, and finally Henniart, Clozel and the referee for their corrections and suggestions.

\tableofcontents

\section{Local computations}\label{sectioncombinatorics}

In this section we compute the compact traces of the functions of Kottwitz against the representations of the general linear group that occur in the cohomology of unitary Shimura varieties. 

\subsection{Notations} Let $p$ be a prime number and let $F$ be a non-Archimedean local field with residue characteristic equal to $p$. Let $\cO_F$ be the ring of integers of $F$, let $\varpi_F \in \cO_F$ be a prime element. We write $\Fq$ for the residue field of $\cO_F$, and the number $q$ is by definition its cardinal. The symbol $G_n$ denotes the locally compact group $\Gl_n(F)$. If confusion is not possible then we drop the index $n$ from the notation. We call a parabolic subgroup $P$ of $G$ \emph{standard} if it is upper triangular, and we often write $P = MN$ for its standard Levi decomposition. We write $K$ for the hyperspecial subgroup $\Gl_n(\cO_F) \subset G$. Let $\cH(G)$ be the Hecke algebra of locally constant compactly supported complex valued functions on $G$, where the product on this algebra is the one defined by the convolution integral with respect to the Haar measure which gives the group $K$ measure $1$. We write $\cH_0(G)$ for the spherical Hecke algebra of $G$ with respect to $K$. Let $P_0$ be the standard Borel subgroup of $G$, let $T$ be the diagonal torus of $G$, and let $N_0$ be the group of upper triangular unipotent matrices in $G$. 

We write $\one_{G_n}$ for the trivial representation and $\St_{G_n}$ for the Steinberg representation of $G_n$. If $P = MN \subset G$ is a standard parabolic subgroup, then $\delta_P$ is equal to $|\det(m, \iin)|$, where $\iin$ is the Lie algebra of $N$. The induction $\Ind_P^G$ is \emph{unitary} parabolic induction. The Jacquet module $\pi_N$ of a smooth representation is not normalized by convention, for us it is the space of coinvariants for the unipotent subgroup $N \subset G$. For the definition of the constant terms $f^{(P)}$ and the Satake transform we refer to the article of Kottwitz \cite[\S 5]{MR564478}. The valuation $v$ on $F$ is normalized so that $p$ has valuation $1$ and the absolute value is normalized so that $p$ has absolute value $q^{-1}$. Finally, let $x \in \R$ be a real number, then $\lfloor x \rfloor$ (\emph{floor function}) (resp. $\lceil x \rceil$, \emph{ceiling function}) denotes the unique integer in the real interval $(x-1, x]$ (resp. $[x, x+1)$). 

Let $n \in \Z_{\geq 0}$ be a non-negative integer. A \emph{composition} of $n$ is an element $(n_a) \in \Z^k_{\geq 1}$ for some $k \in \Z_{\geq 1}$ such that $n = \sum_{a = 1}^k n_a$. We write $\ell(n_a)$ for $k$ and call it the \emph{length} of the composition. The set of compositions $(n_a)$ of $n$ is in bijection with the set of standard parabolic subgroups of $G_n = \Gl_n(F)$. Under this bijection a composition $(n_a)$ of $n$ corresponds to the standard parabolic subgroup
$$
P(n_a) \  \bydef\  \lbr \left. \vierkantnegen {g_1}\ *\ \ddots\ 0\ {g_k} \in G_n \right| g_a \in G_{n_a}  \rbr  \subset G_n. 
$$
We also consider extended compositions. Let $k$ be a non-negative integer. An \emph{extended composition of $n$} of length $\ell(n_a) = k$ is an element $(n_a) \in \Z^k_{\geq 0}$ such that $n = \sum_{a=1}^k n_a$.

\subsection{Compact traces}\label{subsection.compacttraces} In this subsection we work in a slightly more general setting. We assume that $G$ is the set of $F$-points of a smooth reductive group $\underline G$ over $\cO_F$. We pick a minimal parabolic subgroup $P_0$ of $G$ and we standardize the parabolic subgroups of $G$ with respect to $P_0$. A semisimple element $g$ of $G$ is called \emph{compact} if for some (any) maximal torus $T$ in $G$ containing $g$ the absolute value $|\alpha(g)|$ is equal to $1$ for all roots $\alpha$ of $T$ in $\ig$. We now wish to define compactness for the non semisimple elements $g \in G$. We first pass to the algebraic closure: An element $g \in \underline G(\li F)$ is \emph{compact} if it is semisimple part is compact. A rational element $g \in G$ is \emph{compact} if it is compact when viewed as an element of $\underline G(\li F)$. Let $\chi_c^G$ be the characteristic function on $G$ of the set of compact elements $G_c \subset G$. The subset $G_c \subset G$ is open, closed and stable under stable conjugation. We wish to make the following remark: Let $M$ be a Levi subgroup of $G$ and let $g$ be an element of $M \subset G$. The condition ``$g$ is compact for the group $M$'' is not equivalent to ``$g$ is compact for the group $G$''. We need the two notions and therefore we put the group $G$ in the exponent $\chi_c^G$ to clearly distinguish between the two.

Let $f$ be a locally constant, compactly supported function on $G$. The \emph{compact trace} of $f$ on the representation $\pi$ is defined by $\Tr(\chi_c^G f, \pi)$ where $\chi_c^G f$ is the \emph{point-wise product}. We define $\li f$ to be the conjugation average of $f$ under the maximal compact subgroup $K$ of $G$. More precisely, for all elements $g$ in $G$ the value $\li f(g)$ is equal to the integral $\int_K f(k g k^{-1}) \dd k$ where the Haar measure is normalized so that $K$ has volume $1$. 

Let $P$ be a standard parabolic subgroup of $G$ and let $A_P$ be the split center of $P$, we write $\eps_P = (-1)^{\dim(A_P/A_G)}$. Define $\ia_P$ to be $X_*(A_P)_\R$ and define $\ia_P^G$ to be the quotient of $\ia_P$ by $\ia_G$. To the parabolic subgroup $P$ we associate the subset $\Delta_P \subset \Delta$ consisting of those roots which act non trivially on $A_P$. We write $\ia_0 = \ia_{P_0}$ and $\ia_0^G = \ia_{P_0}^G$. 
For each root $\alpha$ in $\Delta$ we have a coroot $\alpha^\vee$ in $\ia_0^G$. For $\alpha \in \Delta_P \subset \Delta$ we send the coroot $\alpha^\vee \in \ia_0^G$ to the space $\ia_P^G$ via the canonical surjection $\ia_0^G \surjects \ia_P^G$. The set of these restricted coroots $\alpha^\vee|_{\ia_P^G}$ with $\alpha$ ranging over $\Delta_P$ form a basis of the vector space $\ia_P$. By definition the set of fundamental roots $\{ \varpi_\alpha  \in \ia_P^{G*}\,\,|\,\, \alpha \in \Delta_P\}$ is the basis of $\ia_P^{G*} = \Hom(\ia_P^G , \R)$ dual to the basis $\{\alpha^\vee_{\ia_P^G}\}$ of coroots. We let $\tau_P^G$  be the characteristic function on the space $\ia_P^G$ of the \emph{acute Weyl chamber}, 
\begin{equation}\label{acuteweylchamberdef}
\ia_P^{G+} = \lbr x \in \ia_P^G \,\, | \,\, \forall \alpha \in \Delta_P \ \langle \alpha,x \rangle > 0 \rbr. 
\end{equation}
We let $\widehat \tau_P^G$  be the characteristic function on $\ia_P^G$ of the \emph{obtuse Weyl chamber}, 
\begin{equation}\label{obtuseweylchamberdef}
{}^+\ia_P^G = \lbr x \in \ia_P^G \,\, |\,\, \forall \alpha \in \Delta_P \ \langle \varpi_\alpha^G, x \rangle > 0 \rbr.
\end{equation}

Let $P = MN$ be a standard parabolic subgroup of $G$. Let $X(M)$ be the group of rational characters of $M$. The \emph{Harish-Chandra mapping}\footnote{In the definition of the Harish-Chandra map there are different sign conventions possible. For example \cite{MR518111} and \cite{MR0340486} use the convention $q^{ \langle \chi, H_M(m) \rangle} = |\chi(m)|_p$ instead. Our sign follows that of \cite{MR1989693}. In the article \cite{MR1068388} there is no definition of the Harish-Chandra map but we have checked that Clozel uses our normalization.} $H_M$ of $M$ is the unique map from $M$ to $\Hom_\Z(X(M), \R) = \ia_P$, such that the $q$-power $q^{- \langle \chi, H_M(m) \rangle}$ is equal to $|\chi(m)|_p$ for all elements $m$ of $M$ and rational characters $\chi$ in $X(M)$. We define the function $\chi_N$ to be the composition $\tau_P^G \circ (\ia_P \surjects \ia_P^G) \circ H_M$, and we define the function $\widehat \chi_N$ to be the composition $\widehat \tau_P^G \circ (\ia_P \surjects \ia_P^G) \circ H_M$. The functions $\chi_N$ and $\widehat \chi_N$ are locally constant and $K_M$-invariant, where $K_M = \underline M(\cO_F)$. 

\begin{lemma}\label{conditieherschrijven}
Let $P = MN$ be a standard parabolic subgroup of $G$. 
Let $m$ be an element of $M$, then 
\begin{enumerate}
\item $\chi_N(m)$ is equal to $1$ if and only if for all roots $\alpha$ in the set $\Delta_P$ we have $|\alpha(m)| < 1$;
\item $\hat \chi_N(m)$ is equal to $1$ if and only if for all roots $\alpha$ in the set $\Delta_P$ we have $|\varpi_\alpha(m)| < 1$.
\end{enumerate}
\end{lemma}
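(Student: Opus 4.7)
The plan is to unwind the definitions of $\chi_N$ and $\widehat\chi_N$ and to observe that both parts of the lemma are direct translations of the defining property of the Harish-Chandra map, namely $q^{-\langle \chi, H_M(m)\rangle} = |\chi(m)|_p$ for every rational character $\chi$ of $M$, extended by $\R$-linearity to $\chi \in X^*(M)_\R$. The statement is essentially a tautology, so the only care needed is in the canonical identifications between $X^*(A_P)_\R$, $X^*(M)_\R$, and the various duals.

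For part (1), by the definition $\chi_N = \tau_P^G \circ (\ia_P \surjects \ia_P^G) \circ H_M$ the condition $\chi_N(m)=1$ is equivalent to the image of $H_M(m)$ in $\ia_P^G$ lying in the acute Weyl chamber, i.e.\ to $\langle \alpha, H_M(m) \bmod \ia_G\rangle > 0$ for every $\alpha \in \Delta_P$. Each root $\alpha \in \Delta_P$ is a character of $A_P$ that extends uniquely to an element of $X^*(M)_\R$; moreover $A_G$ acts trivially on the Lie algebra of $N$, so $\alpha$ is trivial on $A_G$ and hence factors through $\ia_P \surjects \ia_P^G$. The pairing of $\alpha$ with $H_M(m)$ in $\ia_P$ therefore equals the pairing of $\alpha$ with the image of $H_M(m)$ in $\ia_P^G$, and combining with the defining formula of $H_M$ gives
\[
\chi_N(m)=1 \iff \bigl(\forall\alpha\in\Delta_P \colon \langle \alpha, H_M(m)\rangle > 0\bigr) \iff \bigl(\forall\alpha\in\Delta_P\colon |\alpha(m)| < 1\bigr).
\]

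For part (2) the argument is identical with $\alpha$ replaced by the fundamental weight $\varpi_\alpha$. By construction $\varpi_\alpha$ lies in $\ia_P^{G*}$, hence automatically vanishes on the kernel $\ia_G$ of $\ia_P \surjects \ia_P^G$ and lifts to a well-defined element of $\ia_P^*$; after identifying $\ia_P^*$ with $X^*(M)_\R$, the same application of the Harish-Chandra formula translates the obtuse Weyl chamber condition $\langle \varpi_\alpha,\cdot\rangle > 0$ into $|\varpi_\alpha(m)|<1$. The main ``obstacle'' is therefore purely notational: once one has checked that both $\alpha$ and $\varpi_\alpha$ vanish on $\ia_G$ and can be regarded as elements of $X^*(M)_\R$, the lemma reduces to a direct reading of the formula $q^{-\langle \chi, H_M(m)\rangle} = |\chi(m)|_p$.
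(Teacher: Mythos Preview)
Your argument is correct. The paper states this lemma without proof, treating it as an immediate consequence of the definitions of $\chi_N$, $\widehat\chi_N$, the Weyl chambers, and the Harish-Chandra map; your unwinding of those definitions is exactly the intended justification, and the only substantive point---that $\alpha$ and $\varpi_\alpha$ vanish on $\ia_G$ so that the pairing with $H_M(m)$ is the same before and after passing to $\ia_P^G$---is handled cleanly.
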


\begin{proposition}\label{clozelsformula} Let $\pi$ be an admissible $G$-representation of finite length, and let $f$ be an element of $\cH(G)$. The trace $\Tr(f, \pi)$ of $f$ on the representation $\pi$ is equal to the sum $\sum_{P = MN} \Tr_{M, c}\lhk\chi_N \li f^{(P)},  \pi_N(\delta_P^{-1/2})\rhk$ where $P$ ranges over the standard parabolic subgroups of $G$. 
\end{proposition}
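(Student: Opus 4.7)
The plan is to reproduce the proof of this formula, due to Clozel \cite{MR1068388}, whose two key ingredients are Casselman's theorem on characters of Jacquet modules and Arthur's combinatorial identities for the characteristic functions of the acute and obtuse Weyl chambers.

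My starting point would be the expression of the trace as the integral of the Harish-Chandra character against $f$:
\begin{equation*}
\Tr(f,\pi) = \int_G f(g)\,\Theta_\pi(g)\,dg.
\end{equation*}
I would first apply the Weyl integration formula to rewrite the right-hand side as a sum over $G$-conjugacy classes of maximal tori $T$ of integrals $|W(T)|^{-1}\int_{T^{\textup{reg}}} |D(t)|\,O_t(f)\,\Theta_\pi(t)\,dt$, with $O_t(f)$ the orbital integral at $t$. Next I would partition the regular semisimple set of $G$: for each $t$ the roots $\alpha$ with $|\alpha(t)|=1$ cut out a Levi subgroup $M_t$ in which $t$ is compact, and the sign pattern of $\log|\alpha(t)|$ on the remaining roots singles out a unique standard parabolic $P_t = M_t N_t$ such that $\chi_{N_t}(t)=1$. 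This partition regroups the Weyl integration as a sum indexed by standard parabolics $P=MN$, each term supported on $\{m\in M : \chi_c^M(m)\chi_N(m)=1\}$. Using the Iwasawa decomposition $G=PK$ and integrating out $N$ and $K$ converts the factor $O_t(f)$ into the value at $t$ of the $K$-averaged constant term $\li f^{(P)}$, the modulus character $\delta_P^{1/2}$ appearing via the normalization of $f^{(P)}$.

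On the support region $\chi_c^M\chi_N=1$, Lemma \ref{conditieherschrijven} translates $\chi_N(m)=1$ into $|\alpha(m)|<1$ for every $\alpha \in \Delta_P$, and Casselman's theorem then gives the pointwise character identity
\begin{equation*}
\Theta_\pi(m) = \delta_P(m)^{-1/2}\,\Theta_{\pi_N}(m) = \Theta_{\pi_N(\delta_P^{-1/2})}(m).
\end{equation*}
Substituting this into the decomposition yields, for each standard $P=MN$, the integral
\begin{equation*}
\int_M \chi_c^M(m)\,\chi_N(m)\,\li f^{(P)}(m)\,\Theta_{\pi_N(\delta_P^{-1/2})}(m)\,dm,
\end{equation*}
which is exactly $\Tr_{M,c}(\chi_N\,\li f^{(P)},\,\pi_N(\delta_P^{-1/2}))$. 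Summing over $P$ would give the proposition.

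The main obstacle will be the combinatorial partition of $G^{\textup{reg}}$ into pieces indexed by standard parabolics: one must verify that the assignment $t \mapsto P_t$ is well defined modulo the ambient $W$-action, and that summing the resulting Weyl integrations reproduces the full Weyl integration on $G^{\textup{reg}}$ exactly once. Clozel handles this via Arthur's identities relating $\tau_P^G$ and $\widehat\tau_P^G$ — a partition of $\ia_0^G$ into chambers — which are built into the very definitions of $\chi_N$ and $\widehat\chi_N$ given above. A secondary technical issue is that Casselman's identity is pointwise on the dense locus $G^{\textup{reg}}$; to upgrade it to an identity of invariant distributions evaluated on an arbitrary $f\in\cH(G)$ one invokes density of the regular semisimple set together with the admissibility of both sides.
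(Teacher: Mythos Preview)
Your sketch is correct and is indeed the argument given in the references the paper cites; the paper itself does not supply a proof but simply refers to \cite[prop.~2.1]{MR986793} and \cite[p.~259--262]{MR1068388}. One small clarification: the Arthur identities relating $\tau_P^G$ and $\widehat\tau_P^G$ are not needed for this proposition --- what you actually use is the simpler fact that the functions $\chi_c^M\cdot\chi_N$, as $P=MN$ ranges over standard parabolics, partition $M_0^{\textup{reg}}$ (equivalently, the translates of the open acute chambers together with their faces tile $\ia_0^G$); the acute/obtuse duality only enters in the inversion that yields Proposition~\ref{clozelwaldspurger}.
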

\begin{proof}
For the proof see \cite[prop 2.1]{MR986793}. Another proof of this proposition is given in \cite[p. 259--262]{MR1068388}.  
\end{proof}

\begin{proposition}\label{clozelwaldspurger} 
Let $\pi$ be an admissible $G$-representation of finite length, and let $f$ be an element of $\cH(G)$. The compact trace $\Tr(\chi_c^G f, \pi)$ of $f$ on the representation $\pi$ is equal to the sum $\sum_{P = MN} \eps_P \Tr_M\lhk\widehat \chi_N \li f^{(P)},  \pi_N(\delta_P^{-1/2})\rhk$ where $P$ ranges over the standard parabolic subgroups of $G$. 
\end{proposition}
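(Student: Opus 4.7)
The plan is to derive this result from Proposition \ref{clozelsformula} by a combinatorial inversion on the lattice of standard parabolic subgroups. The essential input is a Langlands--Arthur-style identity on $\ia_P^G$ expressing the restriction $\chi_c^G|_M$ as a signed sum over parabolics $Q \supseteq P$ of products of the form $\chi_c^M \cdot \widehat{\chi}_{N \cap M_Q}$, weighted by signs $\eps_Q^G$. This identity is the $p$-adic analogue of the combinatorial lemma underlying Arthur's truncation operator, and at the level of the Harish-Chandra map it translates, via Lemma \ref{conditieherschrijven}, into a standard inclusion--exclusion statement rewriting an acute-chamber decomposition of $\ia_P^G$ as a signed sum of obtuse-chamber indicator functions.

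First, I would apply Proposition \ref{clozelsformula} to $\chi_c^G f$ in place of $f$. Since $\chi_c^G$ is $K$-conjugation invariant, $\overline{\chi_c^G f} = \chi_c^G \cdot \overline{f}$. Moreover, for $m \in M$ regular semisimple and $n \in N$ the element $mn$ has semisimple part conjugate to $m$, so $\chi_c^G(mn) = \chi_c^G(m)$, and consequently $(\chi_c^G \overline{f})^{(P)}(m) = \chi_c^G(m) \cdot \overline{f}^{(P)}(m)$ on $M$. This yields
\begin{equation*}
\Tr(\chi_c^G f, \pi) \;=\; \sum_P \Tr_{M,c}\bigl(\chi_N \cdot \chi_c^G|_M \cdot \overline{f}^{(P)},\; \pi_N(\delta_P^{-1/2})\bigr).
\end{equation*}

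Second, I would substitute the combinatorial identity for $\chi_c^G|_M$. The factor $\chi_c^M$ appearing in that identity converts the $M$-compact trace $\Tr_{M,c}$ into the plain trace $\Tr_M$ directly from the definition of the compact trace. Interchanging the order of summation, summing first over the new outer index $Q \supseteq P$, the inner sum over $P \subseteq Q$ collapses by a standard M\"obius identity on the chain of parabolics in $M_Q$, retaining only the trivial parabolic of $M_Q$ and leaving the single term $\eps_Q \Tr_{M_Q}(\widehat{\chi}_{N_Q} \overline{f}^{(Q)}, \pi_{N_Q}(\delta_Q^{-1/2}))$ for each $Q$; summing over $Q$ then recovers the claimed formula.

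The main obstacle will be establishing the signed identity for $\chi_c^G|_M$ with the correct signs so that the accumulated sign matches $\eps_Q = (-1)^{\dim(A_Q/A_G)}$. The principal technical ingredients are Casselman's theorem on admissible characters, which ensures the trace identities are determined by their restrictions to regular semisimple elements where the combinatorial identity is cleanly verified, together with careful bookkeeping via the Harish-Chandra map $H_M$ translating the compactness and $N$-cutoff conditions into inequalities on $\ia_P^G$ as in Lemma \ref{conditieherschrijven}.
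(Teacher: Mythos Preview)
The paper does not prove this statement; it simply cites Clozel's article \cite{MR1068388}, where the result appears as the Corollary to Proposition~1. Your overall intuition --- that the obtuse formula is obtained from Proposition~\ref{clozelsformula} by an Arthur--Langlands combinatorial inversion on the lattice of standard parabolics --- is exactly the content of Clozel's argument. However, the specific route you propose does not work as written.

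The fatal problem is your first step. Applying Proposition~\ref{clozelsformula} to $\chi_c^G f$ gives
\[
\Tr(\chi_c^G f,\pi)=\sum_{P=MN}\Tr_{M,c}\bigl(\chi_N\cdot\chi_c^G|_M\cdot\overline f^{(P)},\,\pi_N(\delta_P^{-1/2})\bigr),
\]
but for any proper $P$ this term is identically zero. Indeed, on the support of $\chi_c^M$ (which is built into $\Tr_{M,c}$) the image $H_M(m)\in\ia_P^G$ satisfies $H_M(m)=0$ whenever $\chi_c^G|_M(m)=1$, whereas $\chi_N(m)=1$ forces $H_M(m)$ to lie in the \emph{open} acute chamber (strict inequalities). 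These conditions are incompatible for $P\neq G$, so only $P=G$ survives and you recover the tautology $\Tr(\chi_c^G f,\pi)=\Tr(\chi_c^G f,\pi)$. There is therefore nothing left to invert. Relatedly, the identity you assert for $\chi_c^G|_M$ --- a signed sum of $\chi_c^M\cdot\widehat\chi_{N\cap M_Q}$ over $Q\supseteq P$ --- is false already for $G=\Gl_2$: on $\ia_{P_0}^G\cong\R$ it would read $[H=0]=\pm 1\pm[H>0]$, which cannot hold for $H<0$.

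The argument that actually succeeds runs in the opposite direction. Start from the right-hand side, apply Proposition~\ref{clozelsformula} \emph{inside each $M_Q$} to the full trace $\Tr_{M_Q}(\widehat\chi_{N_Q}\overline f^{(Q)},\pi_{N_Q}(\delta_Q^{-1/2}))$, use transitivity of constant terms and of Jacquet modules, and interchange the sums over $P\subseteq Q$. The resulting inner sum over $Q\supseteq P$ is
\[
\sum_{Q:\,P\subseteq Q\subseteq G}\eps_Q\,\tau_P^{\,Q}\,\widehat\tau_Q^{\,G},
\]
which by the Langlands combinatorial lemma equals $0$ for $P\neq G$ and $1$ for $P=G$. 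Only the term $P=G$ remains, and it is precisely $\Tr_{G,c}(f,\pi)=\Tr(\chi_c^G f,\pi)$. Note that the collapse is to $P=G$, not to $P=Q$ as your sketch suggests.
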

\begin{proof}
This is the Corollary to Proposition 1 in the article \cite{MR1068388}. 
\end{proof}

\begin{remark}
Proposition \ref{clozelsformula} and Proposition \ref{clozelwaldspurger} are true for reductive groups over non-Archimedean local fields in general.
\end{remark}

We record the following corollary. We have a parabolic subgroup $\underline {P_0} \subset \underline G$ such that $P_0 = \underline {P_0}(F)$. Let $I \subset \underline G(\cO_F)$ be the group of elements $g \in \underline G(\cO_F)$ that reduce to an element of the group $\underline {P_0}(\cO_F/\varpi_F)$ modulo $\varpi_F$. The group $I$ is called \emph{the standard Iwahori subgroup of $G$}. A smooth representation $\pi$ of $G$ is called \emph{semi-stable} if it has a non-zero invariant vector under the subgroup $I$ of $G$. 

\begin{corollary}\label{chifpss}
Let $\pi$ be a smooth admissible representation of $G$ such that the trace $\Tr(\chi_c^G  f, \pi)$ does not vanish for some spherical function $f \in \cH_0(G)$. Then $\pi$ is semi-stable.
\end{corollary}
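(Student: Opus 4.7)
The plan is to apply Proposition \ref{clozelwaldspurger} to expand the compact trace as a sum
$$\Tr(\chi_c^G f, \pi) = \sum_{P = MN} \eps_P \Tr_M\lhk \widehat\chi_N \li f^{(P)},\ \pi_N(\delta_P^{-1/2}) \rhk$$
over standard parabolic subgroups of $G$, and then to exploit the spherical hypothesis to force one of the Jacquet modules $\pi_N$ to contain a nonzero $K_M$-fixed vector. Indeed, since $f \in \cH_0(G)$ is $K$-biinvariant, we have $\li f = f$, and the constant term $f^{(P)}$ is again spherical on $M$, i.e.\ $K_M$-biinvariant. The function $\widehat\chi_N$ factors through the Harish-Chandra map $H_M$, which vanishes on $K_M$, so $\widehat\chi_N$ is itself $K_M$-biinvariant on $M$. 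Consequently the integrand $\widehat\chi_N \li f^{(P)}$ is $K_M$-biinvariant.

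Now a $K_M$-biinvariant element of $\cH(M)$ acts on any smooth $M$-representation $\sigma$ through the projector onto $\sigma^{K_M}$, so its trace on $\sigma = \pi_N(\delta_P^{-1/2})$ equals its trace on $\sigma^{K_M}$. If $\Tr(\chi_c^G f, \pi) \neq 0$ then at least one term in the sum is nonzero, and hence there exists a standard parabolic $P = MN$ of $G$ with $\pi_N^{K_M} \neq 0$.

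It remains to promote this to the Iwahori-fixed assertion $\pi^I \neq 0$. Here I would invoke Casselman's theorem in its Iwahori--Jacquet form, which for any connected reductive $p$-adic group identifies, via the canonical map to the Jacquet module, the space of $I$-fixed vectors of a smooth admissible representation with the $T_0$-fixed vectors of its Jacquet module along the standard Borel, where $T_0 = T \cap K$. Applied to $M$ (note $\pi_N$ is admissible by Jacquet's theorem and $I_M := I \cap M \subset K_M$, so $\pi_N^{I_M} \supset \pi_N^{K_M} \neq 0$), this gives $(\pi_N)_{N_0 \cap M}^{T_0} \neq 0$. By transitivity of the Jacquet functor this space equals $\pi_{N_0}^{T_0}$ (the twist by $\delta_P^{1/2}$ used in normalization acts through characters of the split torus and hence trivially on the compact subgroup $T_0$, so it is immaterial here). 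A second application of Casselman's theorem, this time to $G$, now yields $\pi^I \neq 0$, which is the desired semi-stability. The delicate step — and essentially the only one that is not bookkeeping — is this final passage via transitivity of Jacquet modules and Casselman's theorem, which is why I would be careful to check the $K_M$-biinvariance of the full integrand (including the truncation $\widehat\chi_N$) before invoking it.
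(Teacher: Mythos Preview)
Your proof is correct and follows the same strategy as the paper: expand the compact trace via Proposition~\ref{clozelwaldspurger}, observe that each term involves a $K_M$-spherical function so that some $\pi_N^{K_M}\neq 0$, and then invoke Casselman to pass to $\pi^I\neq 0$. The only difference is in the packaging of this last step: the paper cites directly Proposition~2.4 of Casselman's \emph{Unramified principal series} paper, which already gives a linear bijection $\pi_N^{\underline M(\cO_F)}\simeq \pi^I$ for any standard parabolic $P=MN$, whereas you reconstruct this bijection by applying the Iwahori--Borel form of Casselman's theorem twice (once for $M$, once for $G$) and using transitivity of the unnormalized Jacquet functor. Both routes are standard; yours is slightly more explicit, the paper's slightly more economical.
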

\begin{proof}(cf. \cite[p. 1351--1352]{MR1254737}). By Proposition \ref{clozelwaldspurger} the trace $\Tr(\widehat \chi_N f^{(P)}, \pi_N(\delta_P^{-1/2}))$ is nonzero for some standard parabolic subgroup $P = MN$ of $G$. The function $\widehat \chi_N f^{(P)}$ is $K_M$-spherical, and therefore $\pi_N(\delta_P^{-1/2})$ is an unramified representation of $M$. In particular the representation $\pi_N$ has an invariant vector for the Iwahori subgroup $I$ of $M$. The Proposition 2.4 in \cite{MR571057} gives a linear bijection from the vector space $\lhk \pi_N\rhk^{\underline M(\cO_F)}$ to the vector space $\pi^I$. Therefore the space $\pi^I$ cannot be $0$. 
\end{proof}

\begin{proposition}\label{constantterminduction}
Let $\Omega$ be an open and closed subset of $G$ which is invariant under conjugation by $G$. Let $P = MN$ be a standard parabolic subgroup of $G$. Let $\rho$ be an admissible representation of $M$ of finite length, and let $\pi$ be the induction $\Ind_P^G(\rho)$ of the representation $\rho$ to $G$. Then for all $f$ in $\cH(G)$ the trace 
$\Tr( \chi_\Omega f, \pi)$ is equal to the trace $\Tr( \chi_\Omega (\li f^{(P)}), \rho)$. 
\end{proposition}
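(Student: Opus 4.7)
The plan is to reduce the identity to the van Dijk formula for the trace of an induced representation applied to the function $h = \chi_\Omega f$, after which the only substantive point is the insensitivity of $\chi_\Omega$ to the $N$-variable when restricted to $P = MN$.

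First, recall that for every $h \in \cH(G)$ and every admissible representation $\rho$ of $M$ of finite length one has
\[
\Tr(h, \Ind_P^G \rho) \;=\; \Tr\bigl(\li h^{(P)}, \rho\bigr),
\]
where, in the normalisation of Kottwitz \cite[\S 5]{MR564478}, $\li h^{(P)}(m) = \delta_P(m)^{1/2} \int_K \int_N h(k m n k^{-1}) \, dn\, dk$. Applied to $h = \chi_\Omega f$, which lies in $\cH(G)$ because $\chi_\Omega$ is locally constant, this gives
\[
\Tr(\chi_\Omega f, \pi) \;=\; \Tr\bigl( \overline{(\chi_\Omega f)^{(P)}}, \rho\bigr).
\]
It therefore suffices to prove the pointwise equality $\overline{(\chi_\Omega f)^{(P)}}(m) = \chi_\Omega(m)\cdot \li f^{(P)}(m)$ for every $m \in M$.

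Unfolding the left hand side gives a double integral over $k \in K$ and $n \in N$ of $\chi_\Omega(k m n k^{-1}) \cdot f(k m n k^{-1})$, so the equality reduces to the claim that $\chi_\Omega(kmnk^{-1}) = \chi_\Omega(m)$ for all $k \in K$ and $n \in N$. The $K$-factor is harmless since $\Omega$ is $G$-conjugation invariant; the real content is that $\chi_\Omega(mn) = \chi_\Omega(m)$ for every $m \in M$ and $n \in N$. Once this is known, $\chi_\Omega(m)$ pulls outside both integrals and one obtains the desired identity.

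The $N$-invariance itself I would prove by a contracting cocharacter argument. Pick a cocharacter $\lambda : \Gm \to A_M$ valued in the split center of $M$ such that every root of $A_M$ occurring in the Lie algebra of $N$ pairs strictly positively with $\lambda$. Then $\lambda(t)$ commutes with $m$, and $\lambda(t) n \lambda(t)^{-1} \to 1$ in $G$ as $|t| \to 0$, so
\[
\lambda(t)(mn)\lambda(t)^{-1} \;=\; m \cdot \lambda(t) n \lambda(t)^{-1} \;\longrightarrow\; m.
\]
Because $\Omega$ and its complement are both open and both stable under $G$-conjugation, the element $\lambda(t)(mn)\lambda(t)^{-1}$ lies in $\Omega$ (respectively its complement) for all sufficiently small $|t|$ exactly when $m$ does, and by $G$-invariance the same is then true of $mn$. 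The main, and really the only, obstacle in the proof is this $N$-invariance step: it is precisely here that all three hypotheses on $\Omega$ — open, closed, and $G$-conjugation stable — are used simultaneously, via the contraction provided by $\lambda$.
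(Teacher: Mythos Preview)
Your proof is correct and takes a genuinely different route from the paper's. Both arguments begin with van Dijk's descent formula $\Tr(\chi_\Omega f,\pi)=\Tr\bigl((\chi_\Omega f)^{(P)},\rho\bigr)$, but they diverge from there. The paper does \emph{not} attempt a pointwise comparison of the two functions on $M$; instead it shows that $\chi_\Omega\cdot\li f^{(P)}$ and $(\chi_\Omega\,\li f)^{(P)}$ have the same orbital integrals at every regular semisimple $\gamma\in M$, by applying van Dijk's Lemma~9 twice (once for $\chi_\Omega f$, once for $f$) to pass back and forth between $M$-orbital integrals of constant terms and $G$-orbital integrals, and then invokes the Weyl integration formula to conclude that functions with matching orbital integrals have matching traces. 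Your contracting-cocharacter argument, by contrast, establishes the stronger \emph{pointwise} identity $\chi_\Omega(mn)=\chi_\Omega(m)$ for all $m\in M$ and $n\in N$, from which the constant-term identity and the trace equality follow immediately without Weyl integration or the descent lemma for orbital integrals. Your route is more elementary and makes more visible exactly where each of the three hypotheses on $\Omega$ is consumed; the paper's route stays entirely within the orbital-integral formalism, which is heavier here but is the natural framework when only conjugacy-class information about the truncation is available.
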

\begin{proof}
By the main theorem of \cite{MR338277} we have 
\begin{equation}\label{weylstepA}
\Tr_G(\chi_\Omega f, \pi) = \Tr_M( (\chi_\Omega f)^{(P)}, \rho).
\end{equation}
We prove  that the functions $\chi_\Omega \cdot (\li f^{(P)})$ and $(\chi_\Omega (\li f^{(P)}))$ in $\cH(M)$ have the same orbital integrals. Let $\gamma \in M$. Then the orbital integral $O_\gamma^M( \chi_\Omega \cdot (\li f^{(P)}) )$ equals $O_\gamma^M( \li f^{(P)})$ if $\gamma \in \Omega$ and vanishes for $\gamma \notin \Omega$. 
By Lemma 9 in \cite{MR338277} we have
$$
O_\gamma^M( (\chi_\Omega (\li{f})^{(P)})) = O_\gamma^M( (\li{\chi_\Omega f})^{(P)}) = D(\gamma) O_\gamma^G(\chi_\Omega f),
$$
where 
$D(\gamma) = D_M(\gamma)^{-1/2} D_G(\gamma)^{1/2}$ 
is a certain Jacobian factor for which we do not need to know the definition; we refer to [\textit{loc. cit}] for the definition.
By applying Lemma 9 of [\textit{loc. cit}] once more the orbital integral $O_\gamma^G(\chi_\Omega f)$ is equal to $O_\gamma^G(f) = D(\gamma) O_\gamma^M(f^{(P)})$ for $\gamma \in \Omega$ and the orbital integral is $0$ for $\gamma \notin \Omega$. 
Therefore, the orbital integrals of the functions $\chi_\Omega \cdot (\li f^{(P)})$ and $(\chi_\Omega \li f^{(P)})$ agree.  

Recall Weyl's integration formula for the group $M$: for any $h \in \cH(M)$ we have
\begin{equation}\label{weylintegrationformula}
\Tr(h, \rho) = \sum_T \frac 1 {|W(M, T)|} \int_{T_{\textup{reg}}} \Delta_M(t)^2 \theta_\rho(t) O_t(h) \dd t, 
\end{equation}
where $\theta_\rho$ is the Harish-Chandra character of $\rho$ and where $T$ runs over the Cartan subgroups of $M$ modulo $M$-conjugation, and $W(M, T)$ is the rational Weyl group of $T$ in $M$, see \cite[p.~97]{MR771670} (cf. \cite[p.~241]{MR986793}). The right hand side in Equation \eqref{weylintegrationformula} depends only on the orbital integrals of the function $h$. Thus, two functions $h, h' \in \cH(M)$ with the same orbital integrals have the same trace on all smooth $M$-representations of finite length. Therefore the $M$-trace $\Tr_M( (\chi_\Omega f)^{(P)}, \rho)$ of the function $(\chi_\Omega f)^{(P)}$ against $\rho$  is equal to $\Tr_M(\chi_\Omega (f^{(P)}), \rho)$.  By combining Equation \eqref{weylintegrationformula} with Equation \eqref{weylstepA} we obtain the proposition.
\end{proof}

\subsection{The Kottwitz functions $f_{n\alpha s}$}\label{subsection.kottwitzfunction} From this point onwards $G$ is the general linear group. Let $n$ and $\alpha$ be positive integers, and let $s$ be a non-negative integer with $s \leq n$. We  call the number $s$ the \emph{signature}, and we call the number $\alpha$ the \emph{degree}. Let $\mu_s \in X_*(T) = \Z^n$ be the cocharacter defined by
$$
(\underset{s}{\underbrace{1, 1, \ldots, 1}}, \underset{n-s}{\underbrace{0, 0, \ldots, 0}}) \in \Z^n.
$$ 
We write $A_n$ for the algebra $\C[X_1^{\pm 1}, \ldots, X_n^{\pm 1}]^{\iS_n}$. The function $f_{n \alpha s} \in \cH_0(G)$ is the spherical function with
\begin{align*}
\cS_G(f_{n\alpha s}) &= q^{\alpha s (n-s)/2} \sum_{\nu \in \iS_n \cdot  \mu_s } [\nu]^\alpha = q^{\alpha s (n-s)/2} \sum_{I \subset \{1, \ldots, n\}, \#I = s} \prod_{i \in I} X_i^\alpha \in A_n
\end{align*}
as Satake transform (cf. \cite{MR761308}). When $n,\alpha,s \in \Z_{\geq 0}$ are such that $n < s$, then we put $f_{n \alpha s} = 0$. 

\begin{definition}
Let $X = X_1^{e_1} X_2^{e_2} \cdots X_n^{e_n} \in \C[X_1^{\pm 1}, \ldots,X_n^{\pm 1}]$ be a monomial. Then the \emph{degree} of $X$ is $\sum_{i=1}^n e_i \in \Z$. We call an element of the algebra $\C[X_1^{\pm 1}, \ldots, X_n^{\pm 1}]$ homogeneous of degree $d$ if it is a linear combination of monomials of degree $d$. These notions extend to the algebras $\cH_0(G)$ and $A_n$ via the isomorphism $\cH_0(G) = A_n$ and the inclusion $A_n \subset \C[X_1^{\pm 1}, \ldots, X_n^{\pm 1}]$. 
\end{definition}

\begin{lemma}\label{generaldegreelemma}
Let $f \in \cH_0(G)$ be a homogeneous function of degree $d$. Then $f$ is supported on the set of elements $g \in G$ with $|\det g| = q^{-d}$. 
\end{lemma}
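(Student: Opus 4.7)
The plan is to establish a compatible grading on both sides of the Satake isomorphism $\cS_G\colon\cH_0(G)\to A_n$ so that the lemma becomes immediate.

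First I would introduce, for each $d\in\Z$, the open-closed bi-$K$-invariant subset $G^{(d)} = \{g\in G : |\det g| = q^{-d}\}$. Since $K\subset G^{(0)}$, multiplication by $\mathbf{1}_{G^{(d)}}$ preserves compact support and bi-$K$-invariance, so $\cH_0(G) = \bigoplus_{d\in\Z}\cH_0(G)_d$, where $\cH_0(G)_d$ consists of those spherical functions supported on $G^{(d)}$. On the Satake side we have the analogous decomposition $A_n = \bigoplus_{d\in\Z}(A_n)_d$ into symmetric Laurent polynomials homogeneous of degree $d$, as defined in the preceding Definition.

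Next I would check that $\cS_G$ maps $\cH_0(G)_d$ into $(A_n)_d$. Write $\cS_G(f) = \sum_{\lambda\in\Z^n}c_\lambda(f)\,X_1^{\lambda_1}\cdots X_n^{\lambda_n}$; up to the scalar factor $\delta_{P_0}^{1/2}(\varpi^\lambda)$ the coefficient $c_\lambda(f)$ is the integral $\int_{N_0}f(\varpi^\lambda n)\dd n$. If $f$ is supported on $G^{(d)}$, then this integral vanishes unless some $\varpi^\lambda n$ lies in $G^{(d)}$; since $n$ is unipotent upper-triangular, $\det(\varpi^\lambda n) = \varpi^{\sum_i\lambda_i}$, and thus non-vanishing forces $\sum_i\lambda_i = d$. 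Hence $\cS_G(f)$ is a linear combination of monomials of degree $d$, i.e.\ $\cS_G(\cH_0(G)_d)\subset (A_n)_d$.

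Finally, since $\cS_G$ is a linear isomorphism and sends each graded piece into the corresponding one, summing over $d$ forces the inclusions $\cS_G(\cH_0(G)_d)\subset (A_n)_d$ to be equalities. Therefore any $f\in\cH_0(G)$ whose Satake transform is homogeneous of degree $d$ must itself lie in $\cH_0(G)_d$, which by definition means $f$ is supported on $G^{(d)} = \{g\in G : |\det g| = q^{-d}\}$. There is no real obstacle here; the only step requiring minor care is recording enough of the formula for $\cS_G$ to read off that the monomial support of $\cS_G(f)$ is constrained by the valuation of the determinant, which is standard.
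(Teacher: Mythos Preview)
Your proof is correct and follows essentially the same idea as the paper's: both use that the constant-term integral over unipotents preserves the valuation of the determinant, so the Satake transform sends functions supported on $\{|\det g|=q^{-d}\}$ to polynomials homogeneous of degree $d$, and then injectivity of $\cS_G$ finishes. The paper packages this slightly more tersely by multiplying $f$ by the characteristic function $\chi$ of $\{|\det g|=q^{-d}\}$, observing $(\chi f)^{(P_0)}=\chi|_T\cdot f^{(P_0)}=f^{(P_0)}$, and invoking injectivity; your grading argument is the same observation phrased globally.
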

\begin{proof} (cf. \cite[p.~34 bottom]{MR1007299}). The function $f^{(P_0)}$ is supported on the set of elements $t \in T$ with $|\det t| = q^{-d}$. Let $\chi$ be the characteristic function of the subset $\{g \in G\,\,|\, \,|\det g| = q^{-d}\} \subset G$. The Satake transform $(\chi f)^{(P_0)}$ is equal to $\chi|_T \cdot (f^{(P_0)})$. The function $\chi f$ is equal to $f$ by injectivity of the Satake transform. 
\end{proof}

By taking $f = f_{n\alpha s}$ we obtain in particular:

\begin{lemma}\label{normofsupport}
The function $f_{n\alpha s}$ is supported on the set of elements $g \in G$ with $|\det g| = q^{-\alpha s}$. 
\end{lemma}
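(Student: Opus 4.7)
The plan is to deduce this directly from Lemma \ref{generaldegreelemma} by checking that $f_{n\alpha s}$ is homogeneous of degree $\alpha s$ in the sense of the preceding definition. Indeed, the Satake transform
$$
\cS_G(f_{n\alpha s}) = q^{\alpha s (n-s)/2} \sum_{I \subset \{1, \ldots, n\}, \#I = s} \prod_{i \in I} X_i^\alpha
$$
is an explicit linear combination of monomials of the form $\prod_{i \in I} X_i^\alpha$ with $\#I = s$. Each such monomial has total degree $\alpha \cdot \#I = \alpha s$, so the Satake transform is homogeneous of degree $\alpha s$ as an element of $A_n \subset \C[X_1^{\pm 1}, \ldots, X_n^{\pm 1}]$.

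Since the isomorphism $\cS_G\colon \cH_0(G) \xrightarrow{\sim} A_n$ is compatible with the grading used to define homogeneity (this is built into the definition given just above Lemma \ref{generaldegreelemma}), the function $f_{n\alpha s}$ is homogeneous of degree $\alpha s$ in $\cH_0(G)$. Applying Lemma \ref{generaldegreelemma} with $d = \alpha s$ yields that $f_{n\alpha s}$ is supported on $\{g \in G : |\det g| = q^{-\alpha s}\}$, which is exactly the claim.

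There is no real obstacle; the only thing to be slightly careful about is matching the sign of the exponent: the convention in the excerpt normalizes the absolute value so that $|\varpi_F| = q^{-1}$, and the definition of homogeneity is transported from $A_n$ through $\cS_G$, so the degree $d$ produces $|\det g| = q^{-d}$ rather than $q^{+d}$. With $d = \alpha s$ this gives $q^{-\alpha s}$ as stated.
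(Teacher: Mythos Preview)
Your proof is correct and follows exactly the same approach as the paper: you observe that each monomial $\prod_{i\in I}X_i^\alpha$ with $\#I=s$ has total degree $\alpha s$, so $\cS_G(f_{n\alpha s})$ is homogeneous of degree $\alpha s$, and then you apply Lemma~\ref{generaldegreelemma} with $d=\alpha s$. The paper's proof is the one-line version of this same argument.
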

\begin{proof}
 The Satake transform $\cS_G(f_{n\alpha s})$ of the Kottwitz function $f_{n\alpha s}$ is homogeneous of degree $\alpha s$ in the algebra $A_n$.
\end{proof}
 
\begin{lemma}\label{constantterm}
Let $P = MN $ be a standard parabolic subgroup of $G$ corresponding to the composition $(n_a)$ of $n$. Let $k$ be the length of this composition. 
The constant term of $f_{n\alpha s}$ at $P$ is equal to 
\begin{equation}\label{groot}
\sum_{(s_a)} q^{\alpha \cdot C(n_a, s_a)} \cdot \lhk f_{n_1 \alpha s_1} \otimes f_{n_2 \alpha s_2} \otimes \cdots \otimes f_{n_k \alpha s_k}\rhk, 
\end{equation}
where the sum ranges over all extended composition $(s_a)$ of $s$ of length $k$. The constant $C(n_a, s_a)$ is equal to $\frac{s(n-s)}2 - \sum_{a = 1}^k \frac {s_a (n_a - s_a)}2$.
\end{lemma}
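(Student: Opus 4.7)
The plan is to compute the constant term on the Satake side. Since $f_{n\alpha s}$ is a spherical function, its constant term $f_{n\alpha s}^{(P)}$ lies in $\cH_0(M)$ and is determined by its Satake transform $\cS_M(f_{n\alpha s}^{(P)})$. The key input is the standard compatibility between constant terms and Satake transforms: under the inclusion
\[
A_n \;=\; \C[X_1^{\pm 1},\dots,X_n^{\pm 1}]^{\iS_n} \;\hookrightarrow\; A_{n_1}\otimes\cdots\otimes A_{n_k},
\]
which corresponds to regrouping the variables $X_1,\dots,X_n$ into the blocks dictated by the composition $(n_a)$, one has $\cS_M(f^{(P)}) = \cS_G(f)$. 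So I would first recall this fact (or cite Kottwitz \cite[\S 5]{MR564478}), and then simply expand $\cS_G(f_{n\alpha s})$ in the larger algebra and match the result with the tensor product of Satake transforms on the Levi factors.

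Concretely, I would write the subset sum by grouping according to how $I$ intersects each block. Letting $I_a = I\cap\{n_1+\cdots+n_{a-1}+1,\dots,n_1+\cdots+n_a\}$ and $s_a = \#I_a$, the tuple $(s_a)$ runs over extended compositions of $s$ of length $k$, and
\[
\sum_{\substack{I\subset\{1,\dots,n\}\\ \#I=s}}\ \prod_{i\in I} X_i^\alpha
\;=\;\sum_{(s_a)}\ \prod_{a=1}^k\ \Biggl(\ \sum_{\substack{I_a\subset\{1,\dots,n_a\}\\ \#I_a=s_a}} \prod_{i\in I_a} X_i^\alpha\Biggr).
\]
The inner factor over block $a$ is, by the very definition of $f_{n_a\alpha s_a}$, equal to $q^{-\alpha s_a(n_a-s_a)/2}\,\cS_{\Gl_{n_a}}(f_{n_a\alpha s_a})$.

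Multiplying by the global prefactor $q^{\alpha s(n-s)/2}$ and collecting the $q$-powers gives an overall factor of $q^{\alpha(s(n-s)/2 - \sum_a s_a(n_a-s_a)/2)} = q^{\alpha C(n_a,s_a)}$ for each term in the sum over $(s_a)$. Hence
\[
\cS_M\bigl(f_{n\alpha s}^{(P)}\bigr) \;=\; \sum_{(s_a)} q^{\alpha C(n_a,s_a)}\ \cS_{\Gl_{n_1}}(f_{n_1\alpha s_1})\otimes\cdots\otimes \cS_{\Gl_{n_k}}(f_{n_k\alpha s_k}).
\]
Since the Satake transform is an algebra isomorphism $\cH_0(M)\xrightarrow{\sim} A_{n_1}\otimes\cdots\otimes A_{n_k}$, injectivity yields the stated identity \eqref{groot}, with the convention that the terms with some $s_a>n_a$ vanish because $f_{n_a\alpha s_a}=0$ in that case.

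The only nontrivial ingredient is the compatibility of the Satake transform with constant terms; everything else is a bookkeeping exercise in regrouping a sum over $s$-subsets of $\{1,\dots,n\}$ according to their distribution among the blocks of the composition $(n_a)$, and keeping track of the half-sum-of-positive-roots normalizations through the exponent $C(n_a,s_a)$. I do not expect any genuine obstacle; care is needed only to check that the sign/normalization of $\delta_P^{1/2}$ in the definition of the constant term matches the exponent $q^{\alpha s(n-s)/2}$ appearing in the definition of the Kottwitz functions, which is precisely what makes the constants $\frac{s_a(n_a-s_a)}{2}$ drop out naturally in the formula for $C(n_a,s_a)$.
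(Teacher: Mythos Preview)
Your proposal is correct and follows essentially the same approach as the paper: both reduce the identity to a comparison of Satake transforms via the compatibility $\cS_M(f^{(P)}) = \cS_G(f)$ (equivalently, transitivity of constant terms), and both carry out the same regrouping of the sum over $s$-subsets of $\{1,\dots,n\}$ according to the blocks of the composition $(n_a)$. The only cosmetic difference is direction: the paper starts from the claimed expression \eqref{groot}, computes its Satake transform, and matches it with $\cS_G(f_{n\alpha s})$, whereas you start from $\cS_G(f_{n\alpha s})$ and expand.
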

\begin{remark}
In the above sum only the extended composition $(s_a)$ of $s$ with $s_a \leq n_a$ participate: If $s_a > n_a$ for some $a$, then $f_{n_a\alpha s_a} = 0$ by our convention.
\end{remark}
\begin{proof} (cf. \cite[Prop. 4.2.1]{MR2567740}). Let $I_a \subset \{1, 2, \ldots, n\}$ be the blocks corresponding to the composition $(n_a)$. If $I$ is a subset of the index set $\{1, \ldots, n\}$, then we  write $X_I$ for the monomial $\prod_{i \in I} X_i \in \C[X_1, X_2, \ldots, X_n]$ in this proof. 
Taking constant terms is transitive and the constant term of a spherical function is spherical. Therefore it suffices to prove that both $f_{n\alpha s}$ and the function in Equation \eqref{groot} have the same Satake transform. We compute
\begin{align*}
\sum_{(s_a)} q^{\alpha \cdot C(n_a, s_a)} \prod_{a = 1}^k \cS_G(f_{n_a \alpha s_a}) &= \sum_{(s_a)} q^{\alpha \cdot C(n_a, s_a)} \prod_{a = 1}^k q^{\alpha \frac{s_a (n_a - s_a)}2} \sum_{I \subset I_a, \# I = s_a} X_I^\alpha \cr 
&= q^{\alpha \frac{s(n-s)}2 } \sum_{(s_a)}  \prod_{a = 1}^k \sum_{I \subset I_a, \# I = s_a} X_I^\alpha \cr
\phantom{\sum_{(s_a)} q^{\alpha \cdot C(n_a, s_a)} \prod_{a = 1}^k \cS_G(f_{n_a s_a})}
&= q^{\alpha \frac{s(n-s)}2 } \sum_{(s_a)} \sum_I X_I^\alpha \quad\quad (I \subset \{1, \ldots, n\},\ \forall a:\  |I \cap I_a| = s_a ) \cr
&= q^{\alpha \frac{s(n-s)}2 } \sum_{I \subset \{1, \ldots, n\}, |I| = s} X_I^\alpha.  
\end{align*}
This concludes the proof. 
\end{proof}

\subsection{Truncation of the constant terms} In this subsection we compute the truncated function $\chi_c^G (f_{n\alpha s}^{(P)})$. This result is crucial to determine which representations of $G$ contribute to the cohomology of the basic stratum of Shimura varieties associated to unitary groups. 

\begin{proposition}\label{chicfp} 
Let $P = MN$ be a standard parabolic subgroup of $G$, and let $(n_a)$ be the corresponding composition of $n$. Let $k$ be the length of the composition $(n_a)$ and let $d$ be the greatest common divisor of $n$ and $s$. The truncated constant term $\chi_c^G ( f^{(P)}_{n\alpha s})$ is non-zero only if there exists a composition $(d_a)$ of $d$ such that for all indices $a$ the number $n_a$ is obtained from $d_a$ by multiplying with $\frac nd$. If such a composition $(d_a)$ exists, then the function $\chi_c^G (f_{n\alpha s}^{(P)})$ is equal to
\begin{equation}
\chi_c^G (f_{n\alpha s}^{(P)}) =  q^{\alpha \cdot C(n_a, s_a)} \cdot \lhk \chi_c^{G_{n_1}} f_{n_1 \alpha s_1} \otimes \chi_c^{G_{n_2}} f_{n_2  \alpha s_2} \otimes \cdots \otimes \chi_c^{G_{n_k}} f_{n_k \alpha  s_k}\rhk \in \cH_0(M), 
\end{equation}
where $s_a = \frac sd \cdot d_a$ for all $a \in \{1, 2, \ldots, k\}$, and the constant $C(n_a, s_a)$ equals $\frac{s(n-s)}2 - \sum_{a = 1}^k \frac {s_a (n_a - s_a)}2$. 
\end{proposition}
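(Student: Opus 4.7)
The plan is to start from Lemma \ref{constantterm}, which expresses
\[
f_{n\alpha s}^{(P)} \;=\; \sum_{(s_a)} q^{\alpha\,C(n_a,s_a)}\,\bigl(f_{n_1\alpha s_1} \otimes \cdots \otimes f_{n_k\alpha s_k}\bigr)
\]
as a sum over extended compositions $(s_a)$ of $s$ of length $k$, and then to multiply this identity \emph{pointwise} by the characteristic function $\chi_c^G$ of the set of compact elements of $G = G_n$. The core task is to identify which summands are killed by $\chi_c^G$ and to show that on the surviving summands the truncation factors as a tensor product of block-wise truncations $\chi_c^{G_{n_a}} f_{n_a\alpha s_a}$. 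Since all the functions involved are $K_M$-spherical and $\chi_c^G$ is conjugation-invariant (hence $K_M$-invariant), each term of the resulting identity will automatically lie in $\cH_0(M)$.

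First I would unfold $G$-compactness for an element $(g_1,\ldots,g_k)\in M = G_{n_1}\times\cdots\times G_{n_k}$. For $G = \Gl_n$ with diagonal torus, the roots are the $e_i - e_j$, so $G$-compactness of the block diagonal element is equivalent to all eigenvalues of $(g_1,\ldots,g_k)$ in $\li F$ sharing one common absolute value $c\in\R_{>0}$. Equivalently, each block $g_a$ is $G_{n_a}$-compact with a common eigenvalue absolute value $c_a$, \emph{and} the $c_a$ all agree. This is strictly stronger than $M$-compactness, and it is exactly the gap between these two notions that forces the divisibility constraint. The passage from semisimple to general $g$ is handled, as in the definition, by the Jordan decomposition, since the semisimple part has the same eigenvalues as $g$.

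Next I would combine this with Lemma \ref{normofsupport}: on the support of $f_{n_a\alpha s_a}$ one has $|\det g_a| = q^{-\alpha s_a}$, so if $g_a$ is $G_{n_a}$-compact then the common absolute value of its eigenvalues equals $q^{-\alpha s_a/n_a}$. For an element in the support of $f_{n_1\alpha s_1}\otimes\cdots\otimes f_{n_k\alpha s_k}$ to be $G$-compact, the rationals $s_a/n_a$ must therefore be independent of $a$, and hence equal $s/n$. Writing $d = \gcd(n,s)$, $n = dn'$, $s = ds'$ with $\gcd(n',s')=1$, the relation $s_a/n_a = s'/n'$ with $n_a,s_a\in\Z$ and $\sum n_a = n$, $\sum s_a = s$ is equivalent to the existence of a composition $(d_a)$ of $d$ with $n_a = (n/d)d_a$ and $s_a = (s/d)d_a$. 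This reproduces the divisibility condition in the statement, and shows that for all other $(s_a)$ the corresponding summand is annihilated by $\chi_c^G$.

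For the surviving summand indexed by $(d_a)$ I would check the tensor factorization: on the support of $f_{n_1\alpha s_1}\otimes\cdots\otimes f_{n_k\alpha s_k}$ with $s_a/n_a = s/n$, the compatibility of the $c_a$ across blocks is automatic from the determinant condition, so $G$-compactness reduces to individual $G_{n_a}$-compactness in each factor. Hence $\chi_c^G$ and $\chi_c^{G_{n_1}}\otimes\cdots\otimes\chi_c^{G_{n_k}}$ coincide on this support, giving
\[
\chi_c^G\bigl(f_{n_1\alpha s_1}\otimes\cdots\otimes f_{n_k\alpha s_k}\bigr) \;=\; \chi_c^{G_{n_1}}f_{n_1\alpha s_1}\otimes\cdots\otimes\chi_c^{G_{n_k}}f_{n_k\alpha s_k},
\]
and the constant $q^{\alpha C(n_a,s_a)}$ is inherited directly from Lemma \ref{constantterm}. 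The main obstacle is the one already highlighted, namely that $\chi_c^G$ restricted to $M$ is a priori finer than $\chi_c^M$; the point of the argument is that the determinant support condition from Lemma \ref{normofsupport} is exactly what bridges these two notions, and forces the discrete divisibility constraint on the compositions.
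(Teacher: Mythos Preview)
Your argument is correct and follows essentially the same route as the paper: start from Lemma~\ref{constantterm}, use that $G$-compactness forces all eigenvalues of $m=(m_a)$ to share a common absolute value, combine with Lemma~\ref{normofsupport} to obtain $s_a/n_a = s/n$ for every $a$, and deduce the divisibility constraint via $d=\gcd(n,s)$. If anything, you are slightly more explicit than the paper in justifying the final tensor factorization $\chi_c^G(f_{n_1\alpha s_1}\otimes\cdots\otimes f_{n_k\alpha s_k}) = \chi_c^{G_{n_1}}f_{n_1\alpha s_1}\otimes\cdots\otimes\chi_c^{G_{n_k}}f_{n_k\alpha s_k}$ on the surviving summand, which the paper leaves implicit.
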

\begin{proof} By Lemma \ref{constantterm} the truncated constant term $\chi_c^G (f_{n\alpha s}^{(P)})$ is a sum of terms of the form $\chi_c^G (f_{n_1\alpha s_1} \otimes \cdots \otimes f_{n_k \alpha s_k})$ where $(s_a)$ ranges over extended compositions of $s$. To prove the Proposition we describe precisely the extended compositions with non-zero contribution. Thus assume that one of those terms is non-zero; say the one which corresponds to the extended composition $(s_a)$ of $s$. Let $m$ be a semisimple point in  $M$ at which this term does not vanish. Let $m_a \in G_{n_a}$ be the $a$-th block of $m$, and let $m_{a, 1}, \ldots, m_{a, n_1} \in \li F$ be the set of eigenvalues of $m_a$. The element $m$ is compact not only in the group $M$, but also in the group $G$, and therefore the absolute value $|m_{a, i}|$ is equal to the absolute value $|m_{b, j}|$ for all indices $a,i,b$ and $j$. In particular the value $| \det(m_a) |^{1/n_a}$ is equal to $|\det(m_b)|^{1/n_b}$. By Lemma \ref{normofsupport} the absolute value of the determinant $\det(m_a)$ is equal to $q^{-\alpha  s_a}$. Therefore the fraction $\frac{s_a}{n_a}$ is equal to the fraction $\frac {s_b}{n_b}$ for all indices $a$ and $b$. We claim that the fraction $\frac{s_a}{n_a}$ equals $\frac sn$. To see this, we have $n_b \frac{s_a}{n_a} = s_b$ for all indices $a,b$, and thus
\begin{equation}\label{rekentrucje}
n \frac {s_a}{n_a} = (n_1 + n_2 + \ldots + n_k) \frac{s_a}{n_a} = s_1 + s_2 + \ldots + s_k = s,
\end{equation}
which proves the claim. We have $\frac ds \cdot s_a = \frac ds \cdot n_a \cdot \frac sn = \frac dn \cdot n_a$. Because  $n_a \cdot \frac sn = s_a$ is integral, the number $n_a \frac dn = s_a \frac ds$ is integral as well. This implies that the composition $(n_a)$ (resp. $(s_a)$) is obtained from the composition $(d_a) := (n_a \frac dn)$ by multiplying with $\frac nd$ (resp. $\frac sd$).
\end{proof}

\subsection{The functions $\chi_N f^{(P)}_{n \alpha s}$ and $\widehat\chi_N f^{(P)}_{n\alpha s}$}\label{obtusefunctionexplicitgln}
Let $P = MN$ be a standard parabolic subgroup of $G$.
The functions $\chi_N f_{n\alpha s}^{(P)}$ and $\widehat \chi_N f^{(P)}_{n\alpha s}$ occur in the formulas for the compact traces on smooth representations of $G$ of finite length (see Proposition \ref{clozelsformula} and Proposition \ref{clozelwaldspurger}). For later computations it will be useful to have them determined explicitly. 

\begin{proposition}\label{glnactuelemma}\label{glnobtuselemma}
Let $P = MN$ be a standard parabolic subgroup of $G$, and let $(n_a)$ be the corresponding composition of $n$. Write $k$ for the length of the composition $(n_a)$. The following statements are true:
\begin{enumerate}
\item[(\textit{i})] The function $\chi_N f_{n \alpha s}^{(P)} \in \cH_0(M)$ is equal to 
$$
\sum_{(s_a)} q^{\alpha \cdot C(n_a, s_a)} \cdot \lhk f_{n_1 s_1} \otimes f_{n_2 s_2} \otimes \cdots \otimes f_{n_k s_k}\rhk, 
$$
where the sum ranges over all extended compositions $(s_a)$ of $s$ of length $k$ satisfying 
$$
\frac{s_1}{n_1} > \frac {s_2}{n_2} > \ldots > \frac{s_k}{n_k}. 
$$ 
\item[(\textit{ii})] The function $\widehat \chi_N f_{n\alpha s}^{(P)} \in \cH_0(M)$ is equal to 
$$
\sum_{(s_a)} q^{\alpha C(n_a, s_a)} \cdot \lhk f_{n_1 \alpha s_1} \otimes f_{n_2\alpha s_2} \otimes \cdots \otimes f_{n_k \alpha s_k}\rhk, 
$$
where the sum ranges over all extended compositions $(s_a)$ of $s$ of length $k$ satisfying 
$$
 (s_1 + s_2 + \ldots + s_a) > \frac sn (n_1 + n_2 + \ldots + n_a),
$$
for all indices $a$ strictly smaller than $k$.
\end{enumerate}
\end{proposition}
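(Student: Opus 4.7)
The plan is to combine Lemma \ref{constantterm} with the characterisations of the acute and obtuse Weyl chambers given in Lemma \ref{conditieherschrijven}. By Lemma \ref{constantterm} the constant term $f^{(P)}_{n\alpha s}$ decomposes as a sum indexed by extended compositions $(s_a)$ of $s$. By Lemma \ref{normofsupport} the $a$-th factor of each summand is supported on $\{\,|\det m_a| = q^{-\alpha s_a}\,\}$, so the Harish-Chandra image $H_M(m)$ is constant on the support of each individual tensor product and is determined entirely by $(s_a)$. Since $\chi_N$ and $\widehat\chi_N$ are pullbacks along $H_M$, multiplication by them acts as a $\{0,1\}$-valued constant on each summand; the problem is therefore reduced to deciding which compositions $(s_a)$ place $H_M$ in the acute, respectively obtuse, chamber.

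For part (i), I would express the simple root $\alpha_a \in \Delta_P$ between the $a$-th and $(a+1)$-th block in $X^*(M) \otimes \Q$ as $\tfrac{1}{n_a}\det_a - \tfrac{1}{n_{a+1}}\det_{a+1}$. Since $\langle \det_b, H_M(m)\rangle = \alpha s_b$ on the support, Lemma \ref{conditieherschrijven}(1) immediately gives $|\alpha_a(m)| < 1 \Leftrightarrow s_a/n_a > s_{a+1}/n_{a+1}$, which yields the stated chain of strict inequalities.

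Part (ii) is the real computation: the fundamental weight $\varpi_{\alpha_a} \in \ia_P^{G*}$ must be identified explicitly in terms of the characters $\det_b$. Using the averaging projection $\ia_0 \twoheadrightarrow \ia_P$ (sending $e_j$ to $\tfrac{1}{n_b}e_b$ when $j$ lies in the $b$-th block), the restricted coroot $\alpha_b^\vee|_{\ia_P^G}$ takes the form $\tfrac{1}{n_b}e_b - \tfrac{1}{n_{b+1}}e_{b+1}$. Writing $\varpi_{\alpha_a} = \sum_b c_b e_b^*$ and imposing both the duality relations $\langle \varpi_{\alpha_a}, \alpha_b^\vee\rangle = \delta_{ab}$ and the constraint that $\varpi_{\alpha_a}$ be trivial on $A_G$ (that is, $\sum_b c_b = 0$) produces a linear system with unique solution $c_b/n_b = (n-v)/n$ for $b \leq a$ and $c_b/n_b = -v/n$ for $b > a$, where $v = n_1 + \cdots + n_a$. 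Evaluating $\langle \varpi_{\alpha_a}, H_M(m)\rangle$ one then obtains a positive scalar multiple of $n(s_1+\cdots+s_a) - v\cdot s$, whose positivity is equivalent to $s_1+\cdots+s_a > (s/n)(n_1+\cdots+n_a)$. Combined with Lemma \ref{conditieherschrijven}(2) this yields (ii). The only genuine obstacle is the explicit determination of $\varpi_{\alpha_a}$; the rest of the argument is bookkeeping on top of Lemma \ref{constantterm}.
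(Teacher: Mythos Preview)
Your proposal is correct and follows essentially the same route as the paper: decompose $f_{n\alpha s}^{(P)}$ via Lemma~\ref{constantterm}, use Lemma~\ref{normofsupport} to see that each summand has support determined by $(s_a)$, and then invoke Lemma~\ref{conditieherschrijven} to translate the chamber conditions into inequalities on the $s_a$. The only cosmetic difference is that the paper writes down the fundamental weight directly as $\varpi_\alpha^G = (H_1+\cdots+H_i - \tfrac{i}{n}(H_1+\cdots+H_n))|_{\ia_P}$ (the standard $\Gl_n$ formula) and reads off the inequality on determinants, whereas you recover the same element by solving the dual-basis linear system in $\ia_P^{G*}$; the two descriptions agree and lead to the same condition $s_1+\cdots+s_a > \tfrac{s}{n}(n_1+\cdots+n_a)$.
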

\begin{proof}
Let $H_i$ for $i = \{1, 2, \ldots, n\}$ denote the $i$-th vector of the canonical basis of the vector space $\ia_0 =\R^n$. 
The subset $\Delta_P$ of $\Delta$ is the subset consisting of the roots $\alpha_{n_1 + n_2 + \ldots + n_a}$ for $a \in \{1, 2, \ldots, k-1\}$. For any root $\alpha = \alpha_i |_{\ia_P}$ in $\Delta_P$ we have: 
\begin{equation}\label{fundamentalrts}
\varpi_\alpha^G = (H_1 + \cdots + H_i - \frac in (H_1 + H_2 + \cdots + H_n))|_{\ia_P}.
\end{equation}
Let $m$ be an element of the standard Levi subgroup $M$. By Lemma \ref{conditieherschrijven} the element $m$ lies in the obtuse Weyl chamber if and only if the absolute value $|\varpi_\alpha^G(m)|$ is smaller than $1$ for all roots $\alpha$ in $\Delta_P$. By Equation \eqref{fundamentalrts} the evaluation at $m = (m_a)$ of the characteristic function $\widehat \chi_N(m)$ is equal to $1$ if and only if 
\begin{equation}\label{lemconA}
|\det(m_1)| \cdot |\det(m_2)| \cdots |\det(m_a)| < |\det(m)|^{\frac {n_1 + n_2 + \ldots + n_a}n}
\end{equation}
for all indices $a \in \{1, \ldots, k-1\}$. 

We determine the function $\widehat \chi_N f^{(P)}_{n\alpha s}$. Let $m = (m_a)$ be an element of $M$. Assume $m$ lies in the obtuse Weyl chamber (cf. Equation \eqref{lemconA}). Let $(s_a)$ be an extended composition of $s$. Besides the condition $\widehat \chi_N(m) \neq 0$ we assume that
$\lhk f_{n_1 \alpha s_1} \otimes f_{n_2 \alpha s_2} \otimes \cdots \otimes f_{n_k \alpha s_k}\rhk(m) \neq 0$. By Lemma \eqref{normofsupport}
the absolute value $|\det(m_a)|$ is equal to $q^{-s_a \alpha}$ for all indices $a$. By  Equation \eqref{lemconA} we thus have the equivalent condition
\begin{equation}\label{lemconC}
(s_1 + s_2 + \ldots + s_a) > \frac sn (n_1 + n_2 + \cdots + n_a )
\end{equation}
for all indices $a \in \{1, \ldots, k-1\}$. We have proved that if the product of the obtuse function $\widehat \chi_N$ with the function $\lhk f_{n_1 \alpha s_1} \otimes f_{n_2 \alpha s_2} \otimes \cdots \otimes f_{n_k \alpha s_k}\rhk$ is non-zero, then the extended composition $(s_a)$ satisfies Equation \eqref{lemconC} for all indices $a < k$. 
Conversely, if the extended composition $(s_a)$ satisfies the conditions in Equation \eqref{lemconC}, then any element $m$ of $M$ with $|\det(m_a)| = q^{-s_a \alpha}$  satisfies $\widehat \chi_N(m) = 1$. This completes the proof of the proposition for the function $\widehat \chi_N f^{(P)}_{n\alpha s}$.

The proof for the function $\chi_N f^{(P)}_{n\alpha s}$ is the same: Instead of using  Equation \eqref{lemconA}, one uses that $\chi_N(m)$ equals $1$ if and only if $|\alpha(m)| < 1$ for all roots $\alpha \in \Delta_P$. Therefore the element $m$ lies in the acute Weyl chamber if and only if 
\begin{equation}\label{acuteexplicitGLn}
|\det(m_1)|^{1/n_1} < |\det(m_2)|^{1/n_2} < \cdots < |\det(m_k)|^{1/n_k}. 
\end{equation}
This completes the proof.
\end{proof}

\subsection{Computation of some compact traces}\label{trivialrepresentation} In this subsection we compute compact traces against the trivial representation and the Steinberg representation. 

\begin{definition}
If $\pi$ is an unramified representation of some Levi subgroup $M$ of $G$ then we write $\varphi_{M, \pi} \in \widehat M$ for the \emph{Hecke matrix} of this representation. We recall the definition of the Hecke matrix. For an unramified representation $\pi$ of $G$ there exists a smooth unramified character $\chi$ of the torus $T$ and a surjection $\Ind_{P_0}^G(\chi) \surjects \pi$. Fix such a character $\chi$ together with such a surjection. Let $\widehat T$ be the complex torus dual to $T$. We compose any rational cocharacter $F^\times \to T(F)$ with $\chi$, and then we evaluate this composition at the prime element $\varpi_F$. This yields an element of $\Hom(X_*(T), \C^\times)$. The set $\Hom(X_*(T), \C^\times)$ is equal to the set $X_*(\widehat T) \otimes \C^\times = \widehat T(\C)$. Thus we have an element of $\widehat T(\C)$ well-defined up to the action of the rational Weyl-group of $T$ in $M$. This element in $\widehat T(\C)$ is the \emph{Hecke matrix} $\varphi_{M, \pi} \in \widehat M$. 
\end{definition}

\begin{proposition}\label{ctsteinberg}
Let $f \in \cH_0(G)$ be a spherical function on $G$. 
Let $\St_G$ be the Steinberg representation of $G$. The compact trace $\Tr(\chi_c^G f, \St_G)$ is equal to $\eps_{P_0} \cS_T(\widehat \chi_{N_0} f^{(P_0)})(\varphi_{T, \delta_{P_0}^{1/2}})$. 
\end{proposition}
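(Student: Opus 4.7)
The plan is to apply Proposition \ref{clozelwaldspurger} to $\pi = \St_G$ and show that only the summand for the Borel $P_0$ survives. Since $f \in \cH_0(G)$ is $K$-biinvariant it is $K$-conjugation invariant, so $\li f = f$, and Proposition \ref{clozelwaldspurger} reads
$$
\Tr(\chi_c^G f, \St_G) = \sum_{P = MN} \eps_P \Tr_M\bigl(\widehat\chi_N f^{(P)}, (\St_G)_N(\delta_P^{-1/2})\bigr),
$$
the sum ranging over standard parabolic subgroups of $G$.

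The key input is the normalized Jacquet module of the Steinberg representation. For $M = \Gl_{n_1} \times \cdots \times \Gl_{n_k}$ the standard Levi of $P=MN$, I would use the classical identity $(\St_G)_N(\delta_P^{-1/2}) = (\St_{n_1} \boxtimes \cdots \boxtimes \St_{n_k}) \otimes \delta_P^{1/2}|_M$. This can be deduced from the Bernstein-Zelevinsky geometric lemma applied to the Zelevinsky segment presentation of $\St_G$, or by transitivity of Jacquet modules from the Borel case $(\St_G)_{N_0}(\delta_{P_0}^{-1/2}) = \delta_{P_0}^{1/2}$ together with the factorization $\delta_{P_0}^{1/2}|_T = \delta_{P_0 \cap M}^{1/2} \cdot \delta_P^{1/2}|_T$, once one notes that an irreducible $M$-representation in this block is determined by its restriction to the Borel of $M$. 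Granting this, the summand for $P \neq P_0$ vanishes by a sphericity argument: $f^{(P)}$ is spherical on $M$ (constant terms of spherical functions are spherical) and $\widehat\chi_N$ is $K_M$-invariant, so $\widehat\chi_N f^{(P)} \in \cH_0(M)$; on the other hand, for $P \neq P_0$ some $n_i \geq 2$, so $\St_{n_i}$ has no $\Gl_{n_i}(\cO_F)$-fixed vector, and this non-sphericity is preserved by the unramified twist $\delta_P^{1/2}|_M$. Hence the trace of a $K_M$-spherical Hecke function against $(\St_G)_N(\delta_P^{-1/2})$ is zero.

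There remains the summand for $P = P_0$, where $M = T$, the factor $\St_T$ is trivial, and $(\St_G)_{N_0}(\delta_{P_0}^{-1/2}) = \delta_{P_0}^{1/2}$ is a character of $T$. The trace $\Tr_T\bigl(\widehat\chi_{N_0} f^{(P_0)}, \delta_{P_0}^{1/2}\bigr)$ is then the integral of the $K_T$-spherical function $\widehat\chi_{N_0} f^{(P_0)}$ against the character $\delta_{P_0}^{1/2}$, which by the very definition of the Satake transform on the torus equals $\cS_T(\widehat\chi_{N_0} f^{(P_0)})(\varphi_{T, \delta_{P_0}^{1/2}})$; multiplying by $\eps_{P_0}$ gives the asserted formula. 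The main technical input, and the only real obstacle, is the Jacquet module identification for $\St_G$; this is a classical computation, but it requires careful bookkeeping of the normalization conventions and of the $\delta_P^{1/2}$ twist.
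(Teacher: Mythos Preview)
Your proof is correct and follows essentially the same approach as the paper: apply Proposition~\ref{clozelwaldspurger}, use that the normalized Jacquet module of $\St_G$ at $P=MN$ is an unramified twist of $\St_M$ (the paper cites \cite[thm 1.7(2)]{MR1285969} for this, while you sketch a derivation), kill the non-Borel terms by sphericity of $\widehat\chi_N f^{(P)}$ versus non-sphericity of $\St_M$, and read off the Borel term. Your added remarks that $\li f=f$ and the explicit identification of the $P_0$-term with the Satake evaluation are correct refinements of what the paper leaves implicit.
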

\begin{proof}
By Proposition \ref{clozelwaldspurger} we have
$$
\Tr(\chi_c^G f, \pi) = \sum_{P = MN} \eps_P \Tr(\widehat \chi_N f^{(P)}, (\St_G)_N(\delta_P^{-1/2})). 
$$
The normalized Jacquet module $(\St_G)_N(\delta_P^{-1/2})$ at a standard parabolic subgroup $P =MN$ is equal  to an unramified twist of the Steinberg representation of $M$ (cf. \cite[thm 1.7(2)]{MR1285969}). Assume that the parabolic subgroup $P = MN \subset G$ is not the Borel subgroup. Then the representation $\St_M$ of $M$ is ramified while the function $\widehat \chi_N f^{(P)}$ is spherical. The contribution of $P$ thus vanishes and consequently only the term corresponding to $P_0$ remains in the above formula. The Jacquet module $(\St_G){N_0}$ is equal to $\one(\delta_{P_0})$. This completes the proof. 
\end{proof}

\begin{lemma}\label{constantvanish}
Let $P = MN$ be a standard parabolic subgroup of $G$ which is proper. 
Let $f \in \cH_0(G)$ be a homogeneous spherical function of degree coprime to $n$. Then $\chi_c^G f^{(P)} = 0$. 
\end{lemma}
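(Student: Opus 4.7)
The plan is a short support argument combining Lemma \ref{generaldegreelemma} with the characterization of compactness in $G$ isolated in the proof of Proposition \ref{chicfp}. First, Lemma \ref{generaldegreelemma} tells us that $f$ is supported on $\{g \in G : |\det g| = q^{-d}\}$, where $d$ is the degree of $f$. Since $\det(mn) = \det(m)$ for any $n$ in the unipotent radical $N$ of $P$, the constant term $f^{(P)}$ inherits the same support condition on $M$: it vanishes off $\{m \in M : |\det m| = q^{-d}\}$.

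Next I would combine this with the compactness constraint. Write $m = (m_1, \ldots, m_k) \in M = \prod_a G_{n_a}$ according to the composition $(n_a)$ of $n$ attached to $P$. Exactly as in the proof of Proposition \ref{chicfp}, compactness of $m$ in $G$ forces all eigenvalues of the semisimple part of $m$ (in $\overline F$) to have equal absolute value, so $|\det m_a|^{1/n_a}$ is independent of $a$. Setting $v_a := v(\det m_a) \in \Z$ and letting $\nu$ denote the common value of $v_a/n_a$, the support condition $\sum_a v_a = d$ gives $\nu n = d$, and therefore $v_a = d n_a/n$ for every $a$.

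The final step is an integrality argument. For $v_a = d n_a/n$ to be an integer we need $n \mid d n_a$; the coprimality hypothesis $\gcd(n,d)=1$ then forces $n \mid n_a$, and since $1 \leq n_a \leq n$ we must have $n_a = n$ and $k = 1$. This contradicts the properness of $P$, so the product $\chi_c^G f^{(P)}$ must vanish identically. There is no real obstacle here: all the geometric content has already been done in Proposition \ref{chicfp} and Lemma \ref{generaldegreelemma}, and the coprimality hypothesis makes the divisibility condition $d n_a/n \in \Z$ incompatible with any proper standard Levi subgroup.
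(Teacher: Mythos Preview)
Your proof is correct and is essentially identical to the paper's argument: the paper also uses the decomposition $\chi_c^G = \chi_c^M \chi_M^G$ to record that compactness in $G$ forces $|\det m_a|^{1/n_a}$ to be independent of $a$, then invokes the degree constraint $\sum_a v_a = d$ (your Lemma~\ref{generaldegreelemma} step) and the trick of Equation~\eqref{rekentrucje} to get $v_a/n_a = d/n$, and concludes by the same integrality-plus-coprimality contradiction. The only cosmetic difference is that the paper packages the equal-absolute-value condition into the auxiliary function $\chi_M^G$ before running the argument, whereas you work directly with a point $m$ in the support.
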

\begin{proof} Write $s$ for the degree of $f$. Let $(n_a)$ be the composition of $n$ corresponding to $P$. We may write $\chi_c^G = \chi_c^M \chi_M^G$ as functions on $M$, where $\chi_M^G \in C^\infty(M)$ is the characteristic function of the set of elements $m = (m_a) \in M = \prod_{a=1}^k G_{n_a}$ such that
\begin{equation}\label{mzaken}
|\det m_1|^{1/n_1} = |\det m_2 |^{1/n_2} = \cdots = |\det m_k|^{1/n_k}.
\end{equation}
We claim that $\chi_M^G f^{(P)} = 0$. 
Let $m = (m_a) \in M$ be an element such that $f^{(P)}(m) \neq 0$ and $\chi_M^G(m) \neq 0$. Thus Equation \eqref{mzaken} is true for $(m_a)$. Let $s_a$ be the integer such that $|\det m_a| = q^{-s_a}$. From Equation \eqref{mzaken} we obtain that $\tfrac {s_a}{n_a} = \tfrac {s_b}{n_b}$ for all indices $a$ and $b$. We have $s_1 + s_2 + \ldots + s_k = s$. Use the argument at Equation \eqref{rekentrucje} to obtain $\tfrac {s_a}{n_a} = \tfrac sn$ for all indices $a$. We find in particular that $n_a \tfrac sn$ is an integer. Because $n$ and $s$ are coprime this implies that $n_a = n$, \ie that $P = G$. This completes the proof. 
\end{proof}

\begin{proposition}\label{cttrivial}
Let $f \in \cH_0(G)$ be a homogeneous function of degree $s$. Assume $s$ is prime to $n$. The compact trace $\Tr(\chi_c^G f, \one)$ is equal to $\eps_{P_0} \Tr(\chi_c^G f, \St_G)$.
\end{proposition}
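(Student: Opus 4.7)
The plan is to combine the Solomon--Tits / Steinberg alternating-sum identity with Proposition~\ref{constantterminduction} and Lemma~\ref{constantvanish}. Concretely, I would start from the identity
\[
\eps_{P_0}\,\St_G \;=\; \sum_{P \supset P_0} \eps_P \,\Ind_P^G(\one_M)
\]
in the Grothendieck group of smooth admissible $G$-representations. This is the classical formula $\St_G = \sum_{P \supset P_0} (-1)^{|\Delta_M|}\Ind_P^G(\one_M)$; for $\Gl_n$ a composition of length $k$ gives $|\Delta_M| = n-k$ while $\eps_P = (-1)^{k-1}$, so $(-1)^{|\Delta_M|} = \eps_{P_0}\eps_P$, and multiplication of the classical formula by $\eps_{P_0}$ yields the displayed version.

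Next I would apply the linear functional $\Tr(\chi_c^G f,-)$ to both sides, getting
\[
\eps_{P_0}\,\Tr(\chi_c^G f,\St_G) \;=\; \sum_{P \supset P_0} \eps_P \,\Tr(\chi_c^G f,\Ind_P^G(\one_M)),
\]
and for each term on the right invoke Proposition~\ref{constantterminduction} with $\Omega = G_c$ and $\rho = \one_M$ to rewrite
\[
\Tr(\chi_c^G f,\Ind_P^G(\one_M)) \;=\; \Tr(\chi_c^G\,\bar f^{(P)},\one_M).
\]
Since $f$ is $K$-bi-invariant we have $\bar f = f$ and hence $\bar f^{(P)} = f^{(P)}$; Lemma~\ref{constantvanish} then kills every proper-$P$ term because $\chi_c^G f^{(P)}\equiv 0$ whenever the degree $s$ of $f$ is prime to $n$. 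Only the term $P = G$ survives, contributing $\eps_G\,\Tr(\chi_c^G f,\one_G) = \Tr(\chi_c^G f,\one_G)$, and the identity $\eps_{P_0}\,\Tr(\chi_c^G f,\St_G) = \Tr(\chi_c^G f,\one_G)$ follows.

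The only delicate point is the sign book-keeping required to bring the Steinberg identity into the form written above, with the paper's convention for $\eps_P$. Once this is verified, the vanishing machinery of Proposition~\ref{constantterminduction} combined with Lemma~\ref{constantvanish} does all the remaining work and no further representation-theoretic computation is needed; in particular Proposition~\ref{ctsteinberg} is not used, although one could equivalently cross-check the $P=P_0$ contribution through it.
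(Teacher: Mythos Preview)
Your proof is correct and follows the same scheme as the paper's: a character identity in the Grothendieck group, then Proposition~\ref{constantterminduction}, then Lemma~\ref{constantvanish} to kill the proper parabolics. The only difference is that the paper starts from the dual identity
\[
\one \;=\; \sum_{P=MN}\eps_P\eps_{P_0}\,\Ind_P^G\bigl(\St_M(\delta_P^{-1/2})\bigr)
\]
rather than your expression of $\St_G$ as an alternating sum of inductions of trivial representations; the two identities are Zelevinsky duals of one another and the subsequent steps are identical. Your version is arguably a touch cleaner since no $\delta_P^{-1/2}$ twist appears, and your sign check $(-1)^{|\Delta_M|}=\eps_{P_0}\eps_P$ is correct.
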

\begin{proof} For the trivial representation $\one$ of $G$ we have the character identity $\one = \sum_{P = MN} \eps_P\eps_{P_0} \Ind_P^G(\St_M(\delta_P^{-1/2}))$ holding in the Grothendieck group of $G$.  By the Proposition \ref{constantterminduction} we have $$\Tr(\chi_c^G f, \Ind_P^G(\St_M(\delta_P^{-1/2}))) = \Tr(\chi_c^G f^{(P)}, \St_M(\delta_P^{-1/2})).$$ By Lemma \ref{constantvanish} we have $\chi_c^G f^{(P)} = 0$ if $P$ is proper. The statement follows.
\end{proof}

\begin{example}
In this example we deduce the formula that we gave in the introduction (see Equation \eqref{introsimpleformula}). We claim that the polynomial $\cS_T(\chi_{N_0} f^{(P_0)}_{n\alpha s})$ in the ring $\C[X_1^{\pm 1}, X_2^{\pm 1}, \ldots X_n^{\pm 1}]$ is equal to the polynomial
$q^{\alpha \frac {s(n-s)}2 } \sum X_{i_1}^\alpha X_{i_2}^\alpha \cdots X_{i_s}^\alpha $ where the indices $i_1, i_2, \ldots, i_s$ in the sum range over the set $\{1, 2, \ldots, n\}$ and satisfy the conditions
(1) $i_1 < i_2 < i_3 < \ldots < i_s$; (2) $i_1 = 1$; (3) If $s > 1$ then for each subindex $j \in \{2, \ldots, s\}$ we have $i_j < 1+ \frac ns (j-1)$. The verification is elementary from Equation \eqref{satakeoff} but let us give details anyway. Let $(s_i)$ be an extended composition of $s$ of length $n$ with $s_i \in \{0, 1\}$ for all $i$ and assume that the monomial  $M(s_i) := X_1^{\alpha s_1} X_2^{\alpha s_2} \cdots X_n^{\alpha s_n}$ occurs in $\cS_T(\chi_{N_0} f^{(P_0)}_{n\alpha s})$ with a non-zero coefficient. We have 
\begin{equation}\label{obtuseS}
s_1 + s_2 + \ldots + s_i > \frac sn i
\end{equation}
for all $i < n$. Define for each subindex $j \leq s$ the index $i_j$ to be equal to $\inf\{i : s_1 + s_2 + \ldots + s_i = j\}$. With this choice for $i_j$ we have $M(s_i) = X_{i_1}^\alpha X_{i_2}^\alpha \cdots X_{i_s}^\alpha$. Equation \eqref{obtuseS} forces  $i_1 = 1$ and for all $j \leq s-1$ that $i_{j+1} - 1$ is equal to the supremum $\sup\{i : s_1 + s_2 + \ldots + s_i = j \}$. Consequently $j > \frac sn (i_{j+1} - 1)$ for all $j \leq  s-1$. By replacing $j$ by $j-1$ in this last formula we obtain for all $j$ with $2 \leq j \leq s$ the inequality 
$$
i_j < 1 + (j-1) \frac ns. 
$$
In the inverse direction, starting from this inequality for all $j$ together with the condition ``$i_1 = 1$'' we may go back to the inequalities in Equation \eqref{obtuseS}. This proves the claim. 
\end{example}

\begin{example} 
We have
\begin{align*}
\Tr(\chi_c^G f_{n\alpha 1}, \one) &= 1 \cr 
\Tr(\chi_c^G f_{n\alpha 2}, \one) &= 1 + q^\alpha + q^{2\alpha} + \ldots + q^{\alpha(\lfloor \frac n2 \rfloor -1)}. 
\end{align*}
\end{example}

\newcommand{\Speh}{\textup{Speh}}
\section{Discrete automorphic representations and compact traces}\label{sectionrigid}

We introduce two classes of semi-stable representations, the Speh representationds and the rigid representations which are certain products of Speh representations. Then we deduce from the Moeglin-Waldspurger classification the possible components at $p$ of discrete automorphic representations in the semi-stable case. 

Let $x, y$ be integers such that $n = xy$. We define the representation $\Speh(x,y)$ of $G$ to be the unique irreducible quotient of the representation $|\det|^{\frac {y-1}2} \St_{G_x} \times |\det |^{\frac {y-3}2} \St_{G_x} \times \cdots \times |\det |^{-\frac{y-1}2} \St_{G_x}$ where the product means unitary parabolic induction from the standard parabolic subgroup of $G_n$ with $y$ blocks and each block of size $x$. A \emph{semi-stable Speh representation} of $G$ is, by definition, a representation isomorphic to $\Speh(x, y)$ for some $x,y$ with $n = xy$. We emphasize that we did not introduce all Speh representations, we have introduced only the ones which are semi-stable. 

A smooth representation $\pi_p$ of $G$ is called \emph{semi-stable rigid} representation if it is isomorphic to a representation of the following form. 
Consider the following list of data
\begin{enumerate}
\item[$\bullet$] $k \in \lbr 1, 2, \ldots, n\rbr$;
\item[$\bullet$] for each $a \in \lbr 1, 2, \ldots, k \rbr$ an unitary unramified character $\eps_a \colon G \to \C^\times$;
\item[$\bullet$] for each $a \in \lbr 1, 2, \ldots, k \rbr$ a real number $e_a$ in the open (real) interval $(-\frac 12, \frac 12)$; 
\item[$\bullet$] positive integers $y, x_1, x_2, \ldots, x_k$ such that $\frac ny = \sum_{a=1}^k x_a$, 
\end{enumerate}
then we form the representation 
$$
\Ind_{P}^{G} \bigotimes_{a=1}^k \Speh(x_a, y)(\eps_a |\cdot|^{e_a}), 
$$ where $P = MN \subset G$ is the parabolic subgroup corresponding to the composition $(yx_a)$ of $n$ and where the tensor product is taken along the blocks of $M = \prod_{a=1}^k G_{y x_a}$. We remark that these representations are irreducible. 

\begin{theorem}[Moeglin-Waldspurger]\label{temperedargument}
Let $F$ be a number field and let $v$ be a finite place of $F$.  Let $\pi_v$ be the local factor at $v$ of a discrete (unitary) automorphic representation $\pi$ of $\Gl_{n}(\A_F)$. Assume that $\pi_v$ is semi-stable. Then $\pi_v$ is a semi-stable rigid representation. 
\end{theorem}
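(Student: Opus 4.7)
The plan is to combine the Moeglin--Waldspurger classification of the discrete spectrum of $\Gl_n$ with the classification of the generic unitary dual of $\Gl_x(F_v)$, and then read off the restrictions imposed by semi-stability. By Moeglin--Waldspurger, any discrete automorphic representation $\pi$ of $\Gl_n(\A_F)$ is a Speh lift $\Speh(\tau, y)$ for some positive integers $x, y$ with $n = xy$ and some cuspidal automorphic representation $\tau$ of $\Gl_x(\A_F)$. Passing to local components gives $\pi_v = \Speh(\tau_v, y)$, the irreducible quotient of the parabolically induced representation $\tau_v |\det|^{(y-1)/2} \times \cdots \times \tau_v |\det|^{-(y-1)/2}$ of $\Gl_n(F_v)$.

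Next, since $\tau$ is globally cuspidal the local representation $\tau_v$ is generic and unitary. The generic unitary dual of $\Gl_x(F_v)$ is classified by Tadic: $\tau_v$ is fully parabolically induced from a tensor product
\[
\bigotimes_{a=1}^k \delta_a\bigl(|\det|^{\alpha_a}\bigr),
\]
where the $\delta_a$ are discrete series representations of $\Gl_{x_a}(F_v)$ with $\sum x_a = x$, the real parameters $\alpha_a$ lie in $(-1/2, 1/2)$, and any pair with $\alpha_a \neq 0$ appears symmetrically (as a complementary series). Using the compatibility of the local Speh construction with parabolic induction, which is part of the Moeglin--Waldspurger package, one obtains
\[
\pi_v \,\cong\, \Ind_P^{G_n}\, \bigotimes_{a=1}^k \Speh(\delta_a, y)\bigl(|\det|^{\alpha_a}\bigr),
\]
where $P$ is the standard parabolic of $\Gl_n(F_v)$ corresponding to the composition $(y x_a)_{a=1}^k$ of $n$.

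Now invoke semi-stability. Since $P$ is standard and $\pi_v$ has a non-zero Iwahori-fixed vector, each induced factor $\Speh(\delta_a, y)\bigl(|\det|^{\alpha_a}\bigr)$ must itself be semi-stable (by the Borel--Casselman equivalence combined with the Jacquet-functor isomorphism of Proposition 2.4 of \cite{MR571057}, already used in the proof of Corollary \ref{chifpss}). The Zelevinsky classification then forces each $\delta_a$ to be an unramified twist of the Steinberg representation $\St_{G_{x_a}}$. Writing $\delta_a\bigl(|\det|^{\alpha_a}\bigr) = \St_{G_{x_a}}\bigl(\eps_a |\det|^{e_a}\bigr)$ with $\eps_a$ unitary unramified and $e_a := \alpha_a \in (-1/2, 1/2)$ presents $\pi_v$ in exactly the rigid form of the definition.

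The main obstacle is the local compatibility of the Speh construction with parabolic induction used in the second step: one must verify that the Langlands quotient of the large standard module defining $\Speh(\tau_v, y)$ really is the parabolic induction of the smaller Speh quotients attached to the blocks $\delta_a$, which requires matching the Zelevinsky segments carefully. A secondary but essentially routine check is that $\Speh(\delta, y)$ admits an Iwahori-fixed vector precisely when $\delta$ is an unramified twist of the Steinberg representation; this amounts to tracking cuspidal support through the Zelevinsky classification.
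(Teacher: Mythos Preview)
Your proposal is correct and follows the same route as the paper: Moeglin--Waldspurger globally, genericity and unitarity of the local cuspidal component $\tau_v$, its product decomposition into twisted discrete series, and then semi-stability forcing each discrete series factor to be an unramified twist of Steinberg. The paper resolves your ``main obstacle'' exactly as you suspect, by matching segments: after rearranging the factors of $I_v$ it exhibits a surjection onto $\prod_{a} \Speh(x_a,y)(\eps_a|\det|^{e_a})$, observes that this product is irreducible because $|e_a|<\tfrac12$ rules out any pair of factors being linked in Zelevinsky's sense, and then invokes uniqueness of the Langlands quotient to identify it with $\pi_v$. One cosmetic difference is that the paper applies semi-stability earlier---directly to the cuspidal support of $\pi_v$, forcing the underlying supercuspidal $\rho$ to be an unramified character---rather than pushing it through the already-established product decomposition as you do; both orders work.
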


\begin{remark}
Let $\pi_v$ be a semi-stable component of a discrete automorphic representation, as considered in the Theorem \ref{temperedargument}. Then using the definition of rigid representation we associate, among other data, to $\pi_v$ the real numbers $e_a$ in the open interval $(-\frac 12, \frac 12)$ (see above).   
The Ramanujan conjecture predicts that the numbers $e_a$ are $0$. This conjecture is proved in the restricted setting of Section \S \ref{section.mainproof} where we work with automorphic representations occurring in the cohomology of certain Shimura varieties. Therefore the numbers $e_a$, which a priori could be there, will not play a role for us. 
\end{remark}

\begin{proof}[Proof of Theorem \ref{temperedargument}]
By the classification of the discrete spectrum of $\Gl_n(\A_F)$ in \cite{MR1026752} there exist 
\begin{enumerate}
\item[$\bullet$] a decomposition $n = xy$, $x,y\in \Z_{\geq 1}$;
\item[$\bullet$] a cuspidal automorphic representation $\omega$ of $\Gl_x(\A_F)$;
\item[$\bullet$] a character $\eps \colon \Gl_n(\A_F) \to \C^\times$,
\end{enumerate}
such that after twisting by $\eps$, the representation $\pi$ is the irreducible quotient $J$ of the induced representation $I$ which is equal to $\ind_{P_x(\A_F)}^{\Gl_n(\A_F)} \lhk \omega |\cdot |^{\frac {y-1} 2}, \ldots, \omega |\cdot |^{\frac{1-y}2} \rhk$. In this formula the induction is unitary and $P_x = M_x N_x \subset \Gl_n$ is the standard parabolic subgroup of $\GL_n$ with $y$-blocks, each one of size $x \times x$. By applying the local component functor~\cite[prop 2.4.1]{MR1265561} to the surjection $I \surjects J$ we obtain a surjection $I_v \surjects J_v$. The component at $v$ of $I_v$ is simply $\ind_{P_x(F_v)}^{\Gl_n(F_v)} \lhk \omega_v |\cdot |^{\frac {y-1} 2}, \ldots, \omega_v |\cdot |^{\frac{1-y}2} \rhk$.  The representation $\omega_v$ is a factor of a cuspidal automorphic representation of $\Gl_x(\A_F)$ and therefore \emph{generic}\footnote{This follows from the results in \cite{MR348047}, combined with the method in \cite{MR401654}, see the discussion on the end of page 172 and beginning of page 173 in the introduction to \cite{MR348047}.}.

From this point onwards we work locally at $v$ only, so we drop the $\Gl_n(F_v)$-notation and write simply $G_n$. By the Zelevinsky classification of $p$-adic representations \cite{MR584084} any generic representation is of the form $ \sigma_1 |\det |^{e_1} \times  \sigma_2|\det |^{e_2} \times \cdots \times  \sigma_k|\det |^{e_k}$ where the $\sigma_a$ are square integrable representations and the $e_a \in \R$ lie in the open interval $(-\frac 12, \frac 12)$. The $\sigma_a$ are equal to the unique irreducible subquotient of a representation of the form $\rho \times \rho |\det |^2 \times \cdots \times \rho |\det |^{k-1}$ where $\rho$ is cuspidal and where the central character of $\rho |\det |^{\frac {k-1} 2}$ is unitary. We assumed that $\pi_v$ is semi-stable. Therefore $\rho$ is semi-stable and cuspidal, and therefore a one-dimensional unramified character. This implies that $\sigma_a$ is equal to $\St_{G_{n_a}}(\eps_a)$ for some $n_a \in \Z_{\geq 0}$ and some unramified unitary character $\eps_a$ of $G_{n_a}$. Thus $\sigma$ is equal to $\St_{G_{n_1}}(\eps_1) |\det |^{e_1} \times \cdots \times \St_{G_{n_k}}(\eps_k) |\det |^{e_k}$. For the representation $I_v$ we obtain
\begin{align*}
I_v &= \omega_v |\det |^{\frac {y-1}2} \times \cdots \times \omega_v |\det |^{\frac {1-y}2} \cr
&= \lhk \sigma_1 |\det |^{e_1} \times \cdots \sigma_r | \det |^{e_r} \rhk|\det |^{\frac {y-1}2} \times \cdots \times \lhk \sigma_1 |\det |^{e_1} \times \cdots \sigma_r | \det |^{e_r} \rhk |\det |^{\frac {1-y}2} \cr
&= \prod_{a=1}^k \lhk \sigma_a |\det |^{e_a + \frac{y-1}2} \times \cdots \times \sigma_i |\det |^{e_a + \frac{1-y}2} \rhk.
\end{align*}
For each $i$, the representation $\sigma_a |\det |^{e_a + \frac{y-1}2} \times \cdots \times \sigma_a |\det |^{e_a + \frac{1-y}2}$ has $\Speh(x_a, y)(\eps_a |\det |^{e_a})$ as (unique) irreducible quotient. Thus we obtain a surjection $I_v \surjects \prod_{a=1}^k \Speh(x_a, y)(\eps_a |\det |^{e_a})$.
The representations $\Speh(x_a, y)(\eps_a)$ are unitary and because $|e_a|$ is \emph{strictly} smaller than $\frac 12$ it is impossible to have a couple of indices $(a,b)$ such that the representation $\Speh(x_a, y)(\eps_a |\det |^{e_a})$ is a twist of $\Speh(x_b, y) (\eps_b |\det |^{e_b})$ with $|\det |$. By the Zelevinsky segment classification it follows that the product 
$\prod_{a=1}^k \Speh(x_a, y)(\eps_a |\det |^{e_a})$ is irreducible. By uniqueness of the Langlands quotient the representation $J_v$ is isomorphic to the product $\prod_{a=1}^k \Speh(x_a, y)(\eps_a |\det |^{e_a})$, as required. 
\end{proof}

\begin{proposition}\label{vanishunlesssteinberg}
Let $\pi$ be  a semi-stable rigid representation of $G = \Gl_n(F)$ where $F$ is a finite extension of $\qp$. Let $f$ be a homogeneous function in $\cH_0(G)$ of degree $s$ coprime to $n$, then the compact trace $\Tr(\chi_c^G f, \pi)$ vanishes unless $\pi$ is the trivial representation or  the Steinberg representation. 
\end{proposition}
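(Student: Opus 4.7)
The plan has two steps: first reduce, via Proposition~\ref{constantterminduction}, to the case where $\pi$ is a single Speh representation, and then handle that case through the Zelevinsky classification of irreducibles of $\Gl_n(F)$ by multi-segments.

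First write $\pi = \Ind_{P}^{G} \bigotimes_{a=1}^{k} \Speh(x_a, y)(\eps_a |\cdot|^{e_a})$ with $P$ the standard parabolic of composition $(yx_a)_{a=1}^k$. If $k \geq 2$, then $P$ is a proper parabolic of $G$; since $f$ is $K$-biinvariant, $\li f = f$, so Proposition~\ref{constantterminduction} with $\Omega = G_c$ gives $\Tr(\chi_c^G f, \pi) = \Tr_M(\chi_c^G f^{(P)}, \rho)$, while Lemma~\ref{constantvanish} forces $\chi_c^G f^{(P)} = 0$ on $M$, making the compact trace vanish. This reduces us to $k = 1$, where $\pi = \Speh(x, y)(\eps|\cdot|^e)$ with $xy = n$. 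The extreme cases $(x,y) = (n, 1)$ and $(x, y) = (1, n)$ correspond to unramified twists of the Steinberg and trivial representations, the two allowed exceptions; the remaining case is $1 < x < n$.

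For $1 < x < n$ I would invoke Zelevinsky's classification~\cite{MR584084}. Irreducible representations of $G$ with a given unramified cuspidal support are parametrized by multi-segments $M$, written $L(M)$, with associated standard module $\Sigma_M = \Ind_{P_M}^G(\bigotimes_{\Delta \in M} \St_\Delta)$, where $P_M$ is the standard parabolic whose Levi has block sizes equal to the segment lengths in $M$. The transition matrix between $\{[L(M)]\}$ and $\{[\Sigma_M]\}$ in the Grothendieck group of the relevant Bernstein block is unitriangular with integer entries for the Zelevinsky partial order, so we may write $[L(M_0)] = \sum_M c_M [\Sigma_M]$ with $c_M \in \Z$, where $M_0$ is the ``ladder'' of $y$ linked length-$x$ segments attached to $\pi$. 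The cuspidal support of $M_0$ is a multiset of $n$ unramified characters in which the central character occurs with multiplicity $\min(x, y) \geq 2$. Since segments consist of pairwise distinct characters, every multi-segment supported on this set must contain at least $\min(x, y) \geq 2$ segments, so $P_M$ is a proper parabolic of $G$ for every $M$ appearing in the expansion. By Lemma~\ref{constantvanish} and Proposition~\ref{constantterminduction} we conclude $\Tr(\chi_c^G f, \Sigma_M) = 0$ for each such $M$, and summing with the integer coefficients $c_M$ yields $\Tr(\chi_c^G f, \pi) = 0$.

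The main obstacle is the invocation of Zelevinsky's classification together with the integer unitriangular expansion of irreducibles in standard modules, a standard but external representation-theoretic input. The decisive combinatorial observation is that for $1 < x < n$ the ``repeated'' cuspidal support of the Speh representation forces every multi-segment supported there to consist of at least two segments, which in turn forces each $\Sigma_M$ in the expansion to be induced from a proper parabolic so that Lemma~\ref{constantvanish} applies uniformly.
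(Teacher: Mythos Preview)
Your proof is correct. The first reduction---using Proposition~\ref{constantterminduction} and Lemma~\ref{constantvanish} to dispose of the case $k \geq 2$ and land on a single twisted Speh representation $\Speh(x,y)$---is exactly what the paper does.

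For the remaining step (ruling out $1 < x < n$) the paper proceeds differently: it invokes Tadi\'c's determinantal character formula \cite[p.~342]{MR1359141},
\[
|\det|^{\frac{x+y}{2}} u(\St_x, y) = \sum_{w \in \iS_y'} \eps(w) \prod_{i=1}^{y} \delta[i, x + w(i) - 1],
\]
and observes that each summand is induced from a proper Levi unless all but one factor is the unit of the representation ring, i.e.\ $w(i) = i - x$ for all but one $i$; when $x, y \geq 2$ this would force $w(1), w(2) \leq 0$, impossible for a permutation of $\{1, \ldots, y\}$. Your route replaces this explicit formula by the general unitriangular expansion of $[L(M_0)]$ in Langlands standard modules $[\Sigma_M]$, plus the observation that the cuspidal support of $\Speh(x,y)$ contains a character of multiplicity $\min(x,y) \geq 2$, so every contributing multi-segment has at least two segments and hence $\Sigma_M$ is induced from a proper parabolic. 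Both arguments feed the same engine (Lemma~\ref{constantvanish} kills anything induced from a proper parabolic); yours trades the specific Tadi\'c identity for the structural input of the Zelevinsky/Langlands classification, and isolates the combinatorial obstruction---a repeated character in the cuspidal support---more transparently.
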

\begin{proof}
Assume that $\Tr(\chi_c^G  f, \pi)$ is non-zero. By Proposition \ref{constantterminduction} the compact trace of $\chi_c^G f^{(P)}$ against the representation $\bigotimes_{i=1}^k \Speh(x_i, y)(\eps_a |\cdot|^{e_a})$ is non zero. The truncated constant term $\chi_c^G f^{(P)}$ vanishes if the parabolic subgroup $P \subset G$ is proper (Lemma \ref{constantvanish}). Therefore $\pi$ is a Speh-representation; say $x$ and $y$ are its parameters. The character formula of Tadic \cite[p. 342]{MR1359141} expresses $\pi$ as an alternating sum of induced representations:
$$
|\det |^{\frac {x+y}2} u(\St_x, y) = \sum_{w \in \iS_y'} \eps(w) \prod_{i=1}^y \delta[i, x+w(i) - 1] \in \cR
$$
(for notations see [\textit{loc. cit}]). The compact trace on all these induced representations vanish unless they are induced from the parabolic subgroup $P= G$. This is true only if the representation $\delta[i, x + w(i) - 1] $ is the unit element in $\cR$ for all indices expect one, \ie if
$(x + w(i) - 1) - i + 1 = 0$. After simplifying we find that  $w(i) = i - x$ for all indices $i$ except one. Make the assumption that $y > 1$. Then clearly, if $x > 1$, the number $i - x$ is non-positive for the indices $i = 1$ and $i = 2$. It then follows that $w(i)$ is non-positive for $i = 1$ or $i = 2$. However, that is impossible because $w$ is a permutation of the index set $\{1, 2, \ldots, y\}$. The conclusion is that either $y =1$ or $x = 1$. But then $\pi$ is the Steinberg or the trivial representation. 
\end{proof}

\section{The basic stratum of some Shimura varieties associated to division algebras}\label{establishmainformula}\label{section.mainproof}

In this section we establish the main result of this article. 

\subsection{Notations and assumptions} As explained in the introduction, we place ourselves in a restricted version of the setting of Kottwitz in the article  \cite{MR1163241}. We start by copying some of the notations from that article. Let $D$ be a division algebra over $\Q$ equipped with an anti-involution $*$. Let $\li \Q$ be the algebraic closure of $\Q$ inside $\C$. Write $F$ for the center of $D$ and we embed $F$ into $\li \Q$. We assume that $F$ is a CM field and we assume that $*$ induces the complex conjugation on $F$.  We write $F^+$ for the totally real subfield of $F$ and we assume that $F$ decomposes into a compositum $\cK F^+$ where $\cK/\Q$ is quadratic imaginary. Let $n$ be the positive integer such that $n^2$ is the dimension of $D$ over $F$. Let $G$ be the $\Q$-group such that for each commutative $\Q$-algebra $R$ the set $G(R)$ is equal to the set of elements $x \in D \otimes_\Q R$ with $x x^* \in R^\times$. The mapping $c \colon G \to \G_{m, \Q}$ defined by $x \mapsto xx^*$ is called the \emph{factor of similitude}. Let $h_0$ be an algebra morphism $h_0 \colon \C \to D_\R$ such that $h_0(z)^* = h_0(\li z)$ for all $z \in \C$. We assume that the involution $x \mapsto h_0(i)^{-1} x^* h_0(i)$ is positive. We restrict $h_0$ to $\C^\times$ to obtain a morphism $h$ from Deligne's torus $\Res_{\C/\R} \G_{m, \C}$ to $G_\R$; we let $X$ be the $G(\R)$ conjugacy class of $h$. The triple $(G, X, h)$ is a Shimura datum and the triple $(G, X, h^{-1})$ is also a Shimura datum. We work with the second datum, which is different from the first by a sign\footnote{The reason for this sign is that the formula conjectured by Kottwitz in the article \cite{MR1163241} turned out to be slightly mistaken. When Kottwitz proved his conjecture in \cite{MR1124982} he found that a different sign should be used. However he did not change the sign in the conclusion of his theorem, rather he introduced it at the beginning by replacing $h$ by $h^{-1}$.  We follow the conventions of Kottwitz because we refer to both articles constantly.}. Let $\mu \in X_*(G)$ be the restriction of $h\otimes \C \colon \C^\times \times \C^\times \to G(\C)$ to the factor $\C^\times$ of $\C^\times \times \C^\times$ indexed by the identity isomorphism $\C \isomto \C$. We write $E \subset \li \Q$ for the reflex field of this Shimura datum (see below for a description of $E$). We obtain varieties $\Sh_K$ defined over the field $E$ and these varieties represent corresponding moduli problems of Abelian varieties of PEL-type as defined in \cite{MR1124982}. 

Let $p$ be a prime number where the group $G_{\qp}$ is unramified over $\qp$, and the conditions of \cite[\S 5]{MR1124982} are satisfied so that the moduli problem and the variety $\Sh_K$ extend to be defined over the ring $\cO_E \otimes \Zp$ [\textit{loc. cit.}]. We assume that the prime $p$ \emph{splits} in the field $\cK$. Let $K \subset G(\Af)$ be a compact open subgroup, of the form $K = K_p K^p$, with $K_p \subset G(\qp)$ hyperspecial (coming from the choice of a lattice and extra data, see [\textit{loc. cit.}, \S 5]). Furthermore, we assume that $K^p \subset G(\Af^p)$ is small enough such that $\Sh_K / \cO_E\otimes \Zp$ is smooth [\textit{loc. cit}, \S 5]. Fix an embedding $\nu_p \colon E \to \lqp$. The embedding $\nu_p$ induces an $E$-prime $\p$ lying above $p$. We write $\fq$ for the residue field of $E$ at the prime $\p$. 

Let $\xi$ be an irreducible algebraic representation over $\li \Q$ of $G_{\li \Q}$ and let $\cL$ be the local system corresponding to $\xi \otimes \C$ on the variety $\Sh_{K, \cO_{E_\p}}$. Let $\ig$ be the Lie algebra of $G(\R)$ and let $K_\infty$ be the stabilizer subgroup in $G(\R)$ of the morphism $h$. Let $f_\infty$ be a Clozel-Delorme  pseudo-coefficient \cite{MR794744} so that the following property holds. Let $\pi_\infty$ be an $(\ig, K_\infty)$-module occurring as the component at infinity of an automorphic representation $\pi$ of $G$. Then the trace of $f_\infty$ against $\pi_\infty$ is equal to the Euler-Poincar\'e characteristic  $\sum_{i=0}^\infty (-1)^i \dim \uH^i(\ig, K_\infty; \pi_\infty \otimes \xi)$ (cf. \cite[p.~657, Lemma 3.2]{MR1163241}). Let $\ell$ be an auxiliary prime number (different from $p$) and $\lql$ an algebraic closure of $\Q_\ell$ together with an embedding $\li \Q \subset \lql$. We write $\cL$ for the $\ell$-adic local system on $\Sh_{K, \cO_{E_\p}}$ associated to the representation $\xi \otimes \lql$ of $G_{\lql}$. 
 
Because $p$ splits in the extension $\cK/\Q$, the group $G_{\qp}$ splits into a direct product of general linear groups:
\begin{equation}\label{groupisom}
G_{\qp} \cong \G_{\textup{m}, \qp} \times \prod_{\wp | p} \Res_{F^+_\wp/\qp} \Gl_{n, F^+_\wp},
\end{equation}
where the product ranges over the set of $F^+$-places above $P$. 
Observe that we wrote `$\cong$' and not `$=$'. The choice of an isomorphism amounts to the choice of, for each $F^+$-place $\wp$ of an $F$-place $\wp'$ above $\wp$. Recall that we have embedded $\cK$ into $\C$ and that $F = \cK \otimes F^+$. Therefore, we have in fact for each $\wp$ such an $\wp'$. We fix for the rest of this article in Equation \ref{groupisom} the isomorphism corresponding to this choice of $F$-primes above the $F^+$-primes above $p$. We write $T_{\qp} \subset G_{\qp}$ for the diagonal torus. Observe that the group on the right hand side of the above equation has an obvious model over $\zp$; we will write $G_{\zp}$ for this model, and we assume $K_p = G_{\zp}(\zp)$.

The field $E$ is included in the field $F$. We copy Kottwitz's description of the reflex field $E$ (cf. \cite[p.~655]{MR1163241}). Consider the subgroup consisting of the elements $g \in G$ whose factor of similitude is equal to $1$. This subgroup is obtained by Weil restriction of scalars from an unitary group $U$ defined over the field $F^+$. Let $v \colon F^+ \to \R$ be an embedding and let $v_1, v_2$ be the two embeddings of $F$ into $\C$ that extend $v$. We associate a number $n_{v_1}$ to $v_1$ and a number $n_{v_2}$ to $v_2$ such that the group $U(\R, v)$ is isomorphic to the standard real unitary group $U(n_{v_1}, n_{v_2})$. The group $\Aut(\C/\Q)$ acts on the set of $\Z$-valued functions on $\Hom(F, \C)$ by translations. The reflex field $E$ is the fixed field of the stabilizer subgroup in $\Aut(\C/\Q)$ of the function $v \mapsto n_v$. 

We write $V(F^+) := \Hom(F^+, \li \Q)$. We identify $V(F^+)$ with $\Hom(F^+, \lqp)$ via the embedding $\nu_p$, and also with $\Hom(F^+, \R)$ via the inclusion $F^+ \subset \R$. In particular $V(F^+)$ is a $\Gal(\lqp/\qp)$-set and a $\Gal(\C/\R)$-set. For every $F^+$-prime $\wp$ above $p$ we write $V(\wp)$ for the Galois orbit in $V(F^+)$ corresponding to $\wp$. 

We have embedded the field $\cK$ into $\C$, and thus each $\Gal(\C/\R)$-orbit in $V(F^+)$ contains a distinguished point, \ie for each each embedding $v \colon F^+ \to \C$ we have a distinguished extension $v_1 \colon F \to \C$. We write $s_v$ for the number $n_{v_1}$. We define $s_{\wp} := \sum_{v \in V(\wp)} s_v$. We define $\Unr_p$ to be the set of $F^+$-places $\wp$ above $p$ such that $s_{\wp} = 0$, and $\Ram_p$ to be the set of $F^+$-places above $p$ such that $s_{\wp} > 0$. We work under one additional technical assumption: We assume that for every $\wp \in \Ram_p$ the number $s_{\wp}$ is coprime with $n$. 

\subsection{Isocrystals and the basic stratum}\label{subsectionisocrystals} 
Write $\cA_K$ for the universal Abelian variety over $\Sh_K$ and $\lambda, i, \li \eta$ for its additional PEL type structures \cite[\S 6]{MR1124982}. Let $L$ be the completion of the maximal unramified extension of $\qp$ contained in $\lqp$. Then $E_{\p, \alpha}$ is a subfield of $L$. We write $\sigma$ for the automorphism of $L$ which acts by $x \mapsto x^p$ on the residue field of $L$. We write $V$ for the $D^\textup{opp}$-module with space $D$ where an element $d \in D^{\opp}$ acts on the left through multiplication on the right on the space $D$.

Let $x \in \Sh_K(\fqa)$ be a point. The rational Dieundonn\'e module $\D(\cA_{K,x})_\Q$ is an $(E_{\p, \alpha}/\qp)$-isocrystal. The couple $(\lambda, i)$ induces via the functor $\D(\square)_\Q$ additional structures on this isocrystal. There exists an isomorphism $\varphi \colon V \otimes E_{\p, \alpha} \isomto \D(\cA_{K,x})_\Q$ of skew-Hermitian $B$-modules \cite[p.~430]{MR1124982}, and via this isomorphism we can send the crystalline Frobenius on $\D(\cA_{K,x})_\Q$ to a $\sigma$-linear operator on $V \otimes E_{\p, \alpha}$. This operator on $V \otimes E_{\p, \alpha}$ may be written in the form $\delta \cdot (\id_V \otimes \sigma)$ where $\delta \in G(E_{\p, \alpha})$ is independent of $\varphi$ up to $\sigma$-conjugacy. We also have the $L$-isocrystal $\D(\cA_{K,x} \otimes \lfq)_\Q = \D(\cA_{K,x})_L$ which induces in the same manner an element of $G(L)$, well defined up to $\sigma$-conjugacy. Let $B(G_{\qp})$ be the set of all $\sigma$-conjugacy classes in $G(L)$ from \cite{MR809866}. This set classifies the $L$ isocrystals with additional $G_{\qp}$-structure up to isomorphism. 

In the articles \cite{MR1411570} and \cite{MR1485921} there is introduced the subset $B(G_{\qp}, \mu_{\lqp}) \subset B(G_{\qp})$ of $\mu_{\lqp}$-admissible isocrystals. The point is that if an isocrystal arises from some element $x \in \Sh_K(\lfq)$ then this isocrystal is always $\mu_{\lqp}$-admissible. The set $B(G_{\qp})$ can be described explicitly as follows. We have $G_{\qp} = \G_{\textup{m}, \qp} \times \Res_{F^+_{\qp}/\qp} \Gl_{n, F^+_{\qp}}$ which induces the decomposition
$$
B(G_{\qp}) = B(\Gm) \times \prod_{\wp | p} B(\Res_{F_\wp^+/\qp} \Gl_{n, F^+_\wp}). 
$$ 
Write $\mu_\wp$ for the component at $\wp$ of the cocharacter $\mu$. Fix one $\wp | p$. There is the Shapiro bijection \cite[Eq. 6.5.3]{MR1485921} 
$$
B(\Res_{F_\wp^+/\qp} \Gl_{n, F^+_\wp}, \mu_\wp) = B(\Gl_{n, F^+_\wp}, \mu_\wp')
$$ 
where the right hand side is the set of $\sigma^{[F^+_\wp:\qp]}$-conjugacy classes in $\Gl_n(L)$  and $\mu_\wp'$ is defined by
$$
\mu_\wp' \bydef \sum_{v \in V(\wp)} (\underset{s_v}{\underbrace{1, 1, \ldots, 1}}, \underset{n-s_v}{\underbrace{0, 0, \ldots, 0}}) \in \Z^n. 
$$
There is an unique element $b \in B(G_{\qp})$ with the property that, for each $\wp$, the corresponding isocrystal $b_\wp'$ in $B(\Gl_{n, F^+_\wp}, \mu_\wp')$ has precisely one slope (\ie $b$ is \emph{basic}). This slope must then be $\tfrac {s_\wp}n$ because the end point of the Hodge polygon of $\mu_\wp'$ is $(n, s_{\wp})$. The component of $b$ at the factor of similitude is the $\sigma$-conjugacy class equal to the set of elements $x \in L^\times$ whose valuation is equal to $1$. 

\begin{lemma}
We have $(n, s_\wp) = 1$ if and only if the isocrystal $V_\wp$ is simple. 
\end{lemma}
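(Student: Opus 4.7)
The plan is to reduce the statement to the Dieudonné--Manin classification in the appropriate category of isocrystals. Via the Shapiro bijection recalled just above the lemma, the object $V_\wp$ is an $F^+_\wp$-isocrystal of rank $n$ (equivalently, an element of $B(\Gl_{n,F^+_\wp})$), and basicness of $b$ means that $V_\wp$ is isotypic with unique slope $s_\wp/n$, computed using the $F^+_\wp$-valuation.

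First I would recall the Dieudonné--Manin theorem for $F^+_\wp$-isocrystals over $L$: after base change to an algebraically closed residue field the category is semisimple, and the simple objects are classified by their slope $\lambda \in \Q$, with the simple isocrystal $E_\lambda$ of slope $\lambda = r/s$ (in lowest terms, $s > 0$) having $F^+_\wp$-dimension equal to $s$. Write $d = \gcd(n, s_\wp)$ and $s_\wp/n = (s_\wp/d)/(n/d)$ in lowest terms. Then the unique simple $F^+_\wp$-isocrystal of slope $s_\wp/n$ has rank $n/d$, and the isotypic $F^+_\wp$-isocrystal $V_\wp$ of rank $n$ must be isomorphic to $E_{s_\wp/n}^{\oplus d}$.

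The conclusion is then immediate: $V_\wp$ is simple in the category of isocrystals with additional $\Gl_n(F^+_\wp)$-structure if and only if $d = 1$, i.e., if and only if $(n, s_\wp) = 1$. The only real content is the invocation of Dieudonné--Manin in the $F^+_\wp$-linear setting; I do not anticipate any serious obstacle, but one should be careful that the normalization of the slope used in the statement (where $s_\wp/n$ is read off from the endpoint $(n, s_\wp)$ of the Hodge polygon of $\mu_\wp'$) matches the Dieudonné--Manin normalization on the $F^+_\wp$-isocrystal side. This compatibility follows from the definition of the Shapiro bijection and is the only point that merits an explicit verification.
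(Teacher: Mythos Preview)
Your argument is correct and is essentially the same as the paper's: both invoke the Dieudonn\'e--Manin classification to write $V_\wp$ as a power of the simple isocrystal of slope $s_\wp/n$, whose height is $n/\gcd(n,s_\wp)$, so simplicity is equivalent to $\gcd(n,s_\wp)=1$. The paper's proof is just a more terse version of what you wrote; your extra remark on checking the slope normalization through the Shapiro bijection is a reasonable sanity check but not strictly needed.
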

\begin{proof}
We have $V_\wp = V_\lambda^m$ where $V_\lambda$ is the simple object of slope $\lambda = \tfrac {s_\wp}{n}$. In case $s_\wp$ and $n$ are coprime this simple object is of height $n$; otherwise its height is strictly less than $n$, and it therefore has to occur with positive multiplicity. 
\end{proof}

The isocrystal $b$ introduced above characterises the basic stratum $B \subset \Sh_{K, \Fq}$ as the reduced subscheme such that for all points $x \in B(\lfq)$ the isocrystal associated to the Abelian variety $\cA_{K,x}$ is equal to $b$. The variety $B$ is projective, but in general not smooth\footnote{The only cases where we know it is smooth is when it is a finite variety.}. 

The Hecke correspondences on $\Sh_K$ may be restricted to the subvariety $\iota \colon B \hookrightarrow \Sh_{K,\fq}$ and we have an action of the algebra $\cH(G(\Af))$ on the cohomology spaces $\uH_\et^i(B_{\lfq}, \iota^* \cL)$. Furthermore, the space $\uH_\et^i(B_{\lfq}, \iota^* \cL)$ carries an action of the Galois group $\Gal(\lfq/\fq)$ which commutes with the action of $\cH(G(\Af))$.   
 
\subsection{The function of Kottwitz} Let $\alpha$ be a positive integer. Let $E_{\p, \alpha} /E_{\p}$ be an unramified extension of degree $\alpha$. We write $\phi_\alpha$ for the characteristic function of the double coset $G(\cO_{E_{\p, \alpha}}) \mu(p^{-1})G(\cO_{E_{\p, \alpha}})$ in $G(E_{\p, \alpha})$. The function $f_\alpha$ is by definition obtained from $\phi_\alpha$ via base change from $G(E_{\p, \alpha})$ to $G(\qp)$. We call the functions $f_\alpha$ the \emph{functions of Kottwitz}; they play a fundamental role in the point-counting formula of Kottwitz for the number of points of the variety $\Sh_K$ over finite fields. In this section we give an explicit description of these functions $f_\alpha$ of Kottwitz. 

\begin{definition}
Let $\wp$ be an $F^+$-place above $p$. We write $V_\alpha(\wp)$ for the set of $\Gal(\lqp/E_{\p, \alpha})$-orbits in the set $V(\wp)$, and $V_\alpha(F^+)$ for the set of $\Gal(\lqp/E_{\p, \alpha})$ orbits in the set $V(F^+)$. If $v \in V_\alpha(F^+)$ is such an orbit, then this orbit corresponds to a certain finite unramified extension $E_{\p, \alpha}[v]$ of $E_{\p, \alpha}$. Let $\alpha_v$ be the degree over $\qp$ of the field $E_{\p, \alpha}[v]$, we then have $E_{\p, \alpha}[v] = E_{\p, \alpha_v}$. 
\end{definition}

\begin{remark}
Let $\li v$ be an element of $V_\alpha(\wp)$, then the number $s_v$ is independent of the choice of representative $v \in \li v$.
\end{remark}
\begin{remark}
Observe that if $F^+$ is Galois over $\Q$, then all the Galois orbits in $V(F^+)$ have the same length. 
\end{remark}

\begin{proposition}\label{kottwitzfunctionexplicit}
The function $f_\alpha$ is given by
$$
f_\alpha = \one_{q^{-\alpha} \Zp^\times} \otimes \bigotimes_{\wp|p} \prod_{v \in V_\alpha(\wp)} f^{\Gl_n(F_\wp^+)}_{n \alpha_v s_v} \in \cH_0(G(\qp)),
$$
where the product is the convolution product.
\end{proposition}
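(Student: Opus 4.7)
The plan is to exploit the direct product decomposition $G_{\qp} \simeq \Gm \times \prod_{\wp | p} H_\wp$ with $H_\wp = \Res_{F^+_\wp/\qp}\Gl_{n, F^+_\wp}$, and to identify each factor of $f_\alpha$ separately. Cyclic base change along $E_{\p,\alpha}/\qp$ commutes with direct products of reductive $\qp$-groups and with the corresponding decomposition of the cocharacter $\mu$ (which is defined over $E_\p \subset E_{\p,\alpha}$). Hence $f_\alpha$ is the tensor product of the base changes of the $\Gm$-component and of the $H_\wp$-components of $\one_{G(\cO_{E_{\p,\alpha}}) \mu(p^{-1}) G(\cO_{E_{\p,\alpha}})}$.

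For the $\Gm$-factor, the component of $\mu$ is (up to the sign coming from the $h \leftrightarrow h^{-1}$ convention) the identity of $\Gm$, so the relevant double coset is $p^{-1}\cO_{E_{\p,\alpha}}^\times$. A direct norm computation $N_{E_{\p,\alpha}/\qp}(p^{-1}) = p^{-[E_{\p,\alpha}:\qp]} = q^{-\alpha}$ (using $[E_{\p,\alpha}:\qp] = \alpha [E_\p:\qp]$ together with $q = p^{[E_\p:\qp]}$, valid because $E_\p/\qp$ is unramified), or equivalently the pullback $X \mapsto X^{[E_{\p,\alpha}:\qp]}$ on the Satake side, yields $\one_{q^{-\alpha}\Zp^\times}$ as claimed.

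For each $\wp | p$ one applies Shapiro's lemma to the decomposition of the étale $\qp$-algebra $F^+_\wp \otimes_{\qp} E_{\p,\alpha} = \prod_{v \in V_\alpha(\wp)} E_{\p,\alpha_v}$, indexed by $\Gal(\lqp/E_{\p,\alpha})$-orbits in $V(\wp) = \Hom_{\qp}(F^+_\wp,\lqp)$. This produces $H_\wp(E_{\p,\alpha}) = \prod_{v \in V_\alpha(\wp)}\Gl_n(E_{\p,\alpha_v})$ together with the compatible decomposition of the hyperspecial subgroup. Over $\lqp$ the cocharacter is $\mu_\wp = \sum_{v' \in V(\wp)}\mu_{s_{v'}}$, and since $s_{v'}$ is constant on each $\Gal(\lqp/E_{\p,\alpha})$-orbit (the remark preceding the proposition), the restriction to the factor $\Gl_n(E_{\p,\alpha_v})$ is simply the minuscule cocharacter $\mu_{s_v}$. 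Therefore the $\wp$-component of $\one_{G(\cO_{E_{\p,\alpha}}) \mu(p^{-1}) G(\cO_{E_{\p,\alpha}})}$ factors as $\bigotimes_{v \in V_\alpha(\wp)} \one_{K_v \mu_{s_v}(p^{-1}) K_v}$, where $K_v$ is hyperspecial in $\Gl_n(E_{\p,\alpha_v})$.

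Finally one shows that the base change of a tensor product $\bigotimes_v \phi_v$ from $\prod_v \Gl_n(E_{\p,\alpha_v})$ to $\Gl_n(F^+_\wp) = H_\wp(\qp)$ equals the convolution of the $\Gl_n$-base changes attached to the individual unramified extensions $E_{\p,\alpha_v}/F^+_\wp$. This is verified by testing against an unramified $\pi$ of $\Gl_n(F^+_\wp)$ with Satake parameter $t_\pi$: by Shapiro the base change of $\pi$ to $H_\wp(E_{\p,\alpha})$ is $\bigotimes_v \pi_v$ where each $\pi_v$ is the cyclic base change of $\pi$ along $E_{\p,\alpha_v}/F^+_\wp$, so the traces multiply. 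Applied to $\phi_v = \one_{K_v \mu_{s_v}(p^{-1}) K_v}$, a direct Satake computation then matches $BC(\phi_v)$ with $f^{\Gl_n(F^+_\wp)}_{n,\alpha_v, s_v}$: the minuscule Satake transform $|k_v|^{s_v(n-s_v)/2}\sum_{\nu \in \iS_n\cdot\mu_{s_v}}[\nu]$ (where $k_v$ is the residue field of $E_{\p,\alpha_v}$), pulled back along $X_i \mapsto X_i^{\alpha_v}$ and rewritten via $|k_v| = q_\wp^{\alpha_v}$, reproduces the defining Satake formula $\cS(f_{n,\alpha_v, s_v}) = q_\wp^{\alpha_v s_v(n-s_v)/2}\sum_\nu [\nu]^{\alpha_v}$ from Subsection \ref{subsection.kottwitzfunction}. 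The main obstacle here is essentially bookkeeping---aligning the various residue degrees (of $E_\p$, $F^+_\wp$, and $E_{\p,\alpha_v}$) and confirming the compatibility of the Shapiro decomposition with cyclic base change---with no deeper representation-theoretic input needed.
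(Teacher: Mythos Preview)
Your argument is essentially the same as the paper's: decompose $G(E_{\p,\alpha})$ via the splitting of $p$ in $\cK$ and the factorisation $F^+_\wp \otimes_{\qp} E_{\p,\alpha} \cong \prod_{v \in V_\alpha(\wp)} E_{\p,\alpha_v}$, write $\phi_\alpha$ as a tensor product of minuscule double-coset indicators along this decomposition, and then apply the base change morphism of spherical Hecke algebras. The paper stops there, whereas you spell out the two points it leaves implicit --- that base change for $\Res_{F^+_\wp/\qp}\Gl_n$ along $E_{\p,\alpha}/\qp$ turns the tensor product over $V_\alpha(\wp)$ into a convolution in $\cH_0(\Gl_n(F^+_\wp))$, and that the Satake transform of each base-changed factor matches the defining formula for $f^{\Gl_n(F^+_\wp)}_{n\alpha_v s_v}$ --- so your write-up is strictly more detailed but not a different method.

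One bookkeeping caution: in your final Satake check you write $|k_v| = q_\wp^{\alpha_v}$ and pull back along $X_i \mapsto X_i^{\alpha_v}$. With the paper's stated definition ``$\alpha_v$ is the degree of $E_{\p,\alpha}[v]$ over $\qp$'' this is off by the factor $[F^+_\wp:\qp]$; both identities hold with $\alpha_v$ replaced by the relative degree $[E_{\p,\alpha_v}:F^+_\wp]$, since the cyclic base change you invoke is along $E_{\p,\alpha_v}/F^+_\wp$. This does not affect the strategy (and the paper's own conventions here are not entirely consistent), but you should make sure your exponent matches whichever normalisation of $\alpha_v$ you actually intend.
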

\begin{proof}
We have the $\Gal(\lqp/\qp)$-set $V(F^+) = \Hom(F^+, \lqp)$. This Galois set is unramified and we have an action of the Frobenius $\sigma$. The Galois set $V(F^+)$ decomposes:
$V(F^+) = \coprod_{\wp | p} V(\wp)$, where $V(\wp) := \Hom(F^+_\wp, \lqp)$. 
We have 
$$
F^+ \otimes E_{\p, \alpha} = \prod_{\wp|p} (E_{\p, \alpha_v})^{\# V_\alpha(\wp)}.
$$
Because $p$ splits in $\cK$ we have $\cK \subset \qp \subset E_{\p, \alpha}$ and therefore
\begin{equation}\label{groupGpa}
G(E_{\p, \alpha}) = E_{\p, \alpha}^\times \times \prod_{\wp | p} \prod_{\li v \in V_\alpha(\wp)} \Gl_n(E_{\p, \alpha_v }).
\end{equation}
Recall that, by the definition of the reflex field, if two elements $v, v' \in V(F^+)$ lie in the same $\sigma^{[E_\p:\qp]}$-orbit, then $s_v = s_{v'}$. 
Thus, with respect to the decomposition in Equation \eqref{groupGpa} we may write 
$$
\phi_\alpha = \one_{p^{-1} \cO_{E_{\p, \alpha}}} \otimes \bigotimes_{\wp | p} \bigotimes_{\li v \in V_\alpha(\wp)} \one_{\Gl_n(\cO_{E_{\p, \alpha_v}})} \cdot \mu_{\li v} (p^{-1}) \cdot \one_{\Gl_n(\cO_{E_{\p, \alpha_v} })} \in \cH_0(G(E_{\p, \alpha})),
$$
where $\mu_{\li v}$ is the cocharacter 
$(\mu_{s_v})^{[E_{\p, \alpha_v}: E_{\p, \alpha}]} \in X_*(\Res_{E_{\p, \alpha_v}/E_{\p, \alpha} } \G_{\textup{m}}^n) = \Z^{n \cdot [E_{\p, \alpha_v} : E_{\p, \alpha}] }$.  

The explicit description of $f_\alpha$ now follows by applying the base change morphism from the spherical Hecke algebra of the group 
$G(E_{\p, \alpha})$ to the spherical Hecke algebra of the group $G(\qp) = \Qp^\times \times \prod_{\wp | p} \Gl_n(F^+_{\wp})$. This completes the proof. 
\end{proof}
\subsection{An automorphic description of the basic stratum}\label{mainproof} Let $\iota$ be the inclusion $B \hookrightarrow \Sh_{K, \fq}$. For each positive integer $\alpha$ and each $f^+ \in \cH(G(\Af^p))$ we define the constant
$$
T_B(f^{p}, \alpha) \bydef \sum_{i=0}^\infty (-1)^i \Tr(f^{\infty p} \times \Phi^\alpha_\p, \uH_{\et}^i(B_{\lfq}, \iota^*\cL)).
$$

We write $f$ for the function $f^{\infty p} f_\alpha f_\infty$ in the Hecke algebra of $G$ and similarly for $\chi_c^{G(\qp)} f$ even though the truncation occurs only at $p$. 

We first give an automorphic expression for the trace $T_B(f^p, \alpha)$ for all sufficiently large integers $\alpha$. 

\begin{proposition}
There exists an integer $\alpha_0$ depending on the function $f^p$ such that $T_B(f^{p}, \alpha)$ equals $\Tr(\chi_c^{G(\qp)} f, \cA(G))$ for all $\alpha \geq \alpha_0$. 
\end{proposition}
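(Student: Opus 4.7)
The plan is to derive the identity by truncating Kottwitz's point-counting formula \cite{MR1124982} to the basic stratum $B$, stabilizing, and comparing with the spectral side of the Arthur--Selberg trace formula applied to $\chi_c^{G(\qp)} f$. By the Grothendieck--Lefschetz trace formula, $T_B(f^p, \alpha)$ equals a weighted count of $\fqa$-points of $B$ twisted by the Hecke correspondences attached to $f^p$. For $\alpha$ large (depending on the support of $f^p$), Kottwitz's formula identifies these twisted points with equivalence classes of Kottwitz triples $(\gamma_0, \gamma, \delta)$ whose associated isocrystal class in $B(G_{\qp}, \mu)$ is the basic one $b$, producing an expression as a sum of products of orbital integrals of $f = f^p f_\alpha f_\infty$.

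The crucial step is to re-express the basic-stratum condition on triples as the compact-support condition $\chi_c^{G(\qp)}$ on the $G(\qp)$-component of the orbital integral. By Subsection \ref{subsectionisocrystals}, the basic isocrystal has, at each $F^+$-place $\wp \mid p$, a single slope $s_\wp/n$. For $\delta$ parametrizing the isocrystal and $\gamma_p \in G(\qp)$ its associated element via the base-change norm, this single-slope condition translates to the assertion that all eigenvalues of each $\wp$-factor of $\gamma_p$ share a common valuation. Equivalently, $|\alpha(\gamma_p)| = 1$ for every root $\alpha$ of the standard torus, which is precisely the condition $\chi_c^{G(\qp)}(\gamma_p) = 1$. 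This identification uses the decomposition \eqref{groupisom} of $G_{\qp}$ into general linear factors, valid because $p$ splits in $\cK$. Consequently, truncating Kottwitz's sum to $B$ amounts to replacing $f_\alpha$ by $\chi_c^{G(\qp)} f_\alpha$ inside the orbital integrals.

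Next I would apply Kottwitz's stabilization \cite{MR1044820} to the truncated sum. Since $p$-compactness depends only on the stable conjugacy class (stable conjugation preserves eigenvalues and hence Newton polygons), the stabilization argument carries over essentially verbatim. In the present division-algebra setting $G$ is anisotropic modulo its centre and essentially free of endoscopy, so one obtains directly a sum of stable orbital integrals of $\chi_c^{G(\qp)} f$ on $G(\A)$. This is the geometric side of the simple Arthur--Selberg trace formula for $G$ applied to $\chi_c^{G(\qp)} f$; by the defining property of the Clozel--Delorme function $f_\infty$ (which picks out precisely the $(\ig, K_\infty)$-cohomology with coefficients in $\xi$), the spectral side evaluates to $\Tr(\chi_c^{G(\qp)} f, \cA(G))$.

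The principal obstacle is the second step above, where one must cleanly pass from the Newton-polygon condition on $\delta$ to the compact-support condition on $\gamma_p$: this requires carefully unwinding Kottwitz's parametrization of $B(G_{\qp}, \mu)$ together with the behaviour of the base-change norm on eigenvalue valuations, and checking that no semisimplicity or rationality subtleties obstruct the pointwise identification. The integer $\alpha_0$ arises from combining the restriction already present in Kottwitz's comparison of points with triples (which depends on $f^p$ through its support) with the requirement that $E_{\p,\alpha}$ be large enough that all Galois orbits on $V(F^+)$ relevant to the decomposition of $f_\alpha$ in Proposition \ref{kottwitzfunctionexplicit} have stabilized; for $\alpha$ beyond these thresholds the two sides agree identically.
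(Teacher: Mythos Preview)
Your overall approach matches the paper's: truncate Kottwitz's point-counting formula \cite{MR1124982} to basic Kottwitz triples, identify this truncation with the insertion of $\chi_c^{G(\qp)}$, stabilize via \cite{MR1044820} using the absence of endoscopy in the division-algebra case, and compare with the geometric side of the trace formula. The reasoning in the middle paragraph---single slope $\Leftrightarrow$ all eigenvalue valuations equal $\Leftrightarrow$ compact element---is exactly the content of the paper's argument.

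There are, however, two genuine gaps. First, the integer $\alpha_0$ does \emph{not} arise from Kottwitz's comparison of points with triples (that formula holds for all $\alpha \geq 1$), nor from waiting for Galois orbits in $V(F^+)$ to stabilize. It comes from Fujiwara's trace formula \cite{MR1431137} (Deligne's conjecture): since $B$ is in general not smooth, the naive Grothendieck--Lefschetz formula for the Hecke correspondence twisted by Frobenius is only known for $\alpha$ sufficiently large depending on the correspondence and the sheaf. Your invocation of ``the Grothendieck--Lefschetz trace formula'' glosses over precisely the point where $\alpha_0$ enters.

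Second, you omit the base change fundamental lemma. Kottwitz's formula involves \emph{twisted} orbital integrals $TO_\delta(\phi_\alpha)$ on $G(E_{\p,\alpha})$, not orbital integrals of $f_\alpha$ on $G(\qp)$. The paper first rewrites the basic condition as $TO_\delta(\chi_{\sigma c}^{G(E_{\p,\alpha})}\phi_\alpha)$, and then invokes the fundamental lemma of base change (\cite{MR1068388}, \cite{MR868140}) to transfer $\chi_{\sigma c}^{G(E_{\p,\alpha})}\phi_\alpha$ to $\chi_c^{G(\qp)} f_\alpha$, using that the truncation respects the norm map by construction. Your sentence ``via the base-change norm'' gestures at this but does not supply the transfer of functions; without it the passage from twisted to ordinary orbital integrals is not justified.
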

\begin{proof}
The main theorem of the article \cite{MR1124982} gives an equation of the form 
\begin{equation}\label{kottwitzequation}
\sum_{x' \in \Fix_{\Phi_\p^\alpha \times f^{\infty p}}(\lfq) } \Tr(\Phi_\p^\alpha \times f^{\infty p}, \cL_x)  = |\ker^1(\Q, G)|\sum_{(\gamma_0; \gamma, \delta)} c(\gamma_0; \gamma, \delta) O_\gamma(f^{\infty p}) TO_{\delta}(\phi_\alpha) \tr \xi_\C(\gamma_0),
\end{equation}
the notations are from [\textit{loc. cit}], see especially \S 19. We restrict this formula to the basic stratum $B$ by considering on the right hand side only \emph{basic} Kottwitz triples. In this context \emph{basic} means that the stable conjugacy class $\gamma_0$ in $(\gamma_0; \gamma, \delta)$ is compact at $p$, or, equivalently that the isocrystal corresponding to $\delta$ is the basic isocrystal in $B(G_{\qp}, \mu)$. The elements $x' \in \Fix_{\Phi_\p^\alpha \times f^{\infty p}}(\lfq)$ in the sum in the left hand side of the Equation then have to be restricted to range over the set of fix points $\Fix_{\Phi_\p^\alpha \times f^{\infty p}}^B$ of the correspondence $\Phi_\p^\alpha \times f^{\infty p}$ acting on the variety $B$. Everything else remains unchanged. This follows from the arguments of Kottwitz given for the above equation (cf. [\textit{loc. cit}, \S 19]). 

From Fujiwara's trace formula \cite[thm 5.4.5]{MR1431137} we obtain
$$
T_B(f^{p}, \alpha) = \sum_{x' \in \Fix_{\Phi_\p^\alpha \times f^{\infty p}}^B(\lfq) } \Tr(\Phi_\p^\alpha \times f^{\infty p}, \iota^*\cL_x) 
$$
for $\alpha$ large enough; say that this formula is true for all $\alpha \geq \alpha_0$. Note that in Fujiwara's statement the integer $\alpha_0$ depends on the correspondence and the sheaf $\cL$.

To any $\sigma$-conjugacy class $\delta \in G(E_{\p, \alpha})$ we  associate a conjugacy class $\cN(\delta)$ in the group $G(\qp)$ defined up to conjugacy by $\delta \sigma(\delta) \cdots \sigma^{\alpha' - 1}(\delta)$ (cf. \cite{MR1007299} \cite[p.~799]{MR683003}). The element $\delta \in G(E_{\p, \alpha})$ is called \emph{$\sigma$-compact} if its norm $\cN(g)$ is a compact conjugacy class in $G(\qp)$. Let $\chi_c^{G(\qp)}$ be the characteristic function on $G(\Qp)$ of the subset of compact elements (cf. \S \ref{subsection.compacttraces}). We let $\chi_{\sigma c}^{G(E_{\p, \alpha})}$ be the characteristic function on $G(E_{\p, \alpha})$ of the set of $\sigma$-compact elements. Consequently $T_B(f^{\infty p}, \alpha)$ is equal to  
$$
|\ker^1(\Q, G)|\sum_{(\gamma_0; \gamma, \delta)} c(\gamma_0; \gamma, \delta) O_\gamma(f^{\infty p}) TO_{\delta}(\chi_{\sigma c}^{G(E_{\p, \alpha})} \phi_\alpha)\tr \xi_\C(\gamma_0)
$$
where $(\gamma_0; \gamma, \delta)$ ranges over \emph{all} Kottwitz triples. Kottwitz has pseudo-stabilized this formula:
\begin{align}\label{stabilization.A}
\tau(G)\sum_{(\gamma_0; \gamma, \delta)} \sum_{\kappa \in \iK(I_0/\Q)} \langle \alpha(\gamma_0; \gamma, \delta),& s \rangle e(\gamma, \delta) O_\gamma(f^{\infty p}) TO_{\delta}(\chi_{\sigma c}^{G(E_{\p, \alpha})} \phi_\alpha) \tr \xi_\C(\gamma_0) \cdot \cr 
& \cdot \vol(A_G(\R)^0 \backslash I(\infty)(\R))^{-1},
\end{align}
see \cite[Eq. (7.5)]{MR1044820}. By the base change fundamental Lemma (see \cite{MR1068388} and \cite{MR868140}) the functions $\phi_\alpha$ and $f_\alpha$ have matching stable orbital integrals (the functions are \emph{associated}). By construction of the function $\chi_{\sigma c}^{G(E_{\p, \alpha})}$ this is then also the case for the truncated functions $\chi_{\sigma c}^{G(E_{\p, \alpha})} \phi_\alpha$ and $\chi_c^{G(\qp)} f_\alpha$. The group $G$ arises from a division algebra and therefore the group $\iK(G_{\gamma_0}/\Q)$ is trivial for any (semisimple) element $\gamma \in G(\Q)$ \cite[Lemma 2]{MR1163241}. Let $\gamma_\infty$ be a semisimple element of $G(\R)$. Then the stable orbital integral $\SO_{\gamma_\infty}(f_\infty)$ vanishes unless $\gamma_\infty$ is elliptic, in which case it is equal to $\vol(A_G(\R)^0\backslash I(\R))^{-1} e(I)$, where $I$ denotes the inner form of the centralizer of $\gamma_\infty$ in $G$ that is anisotropic modulo the split center $A_G$ of $G$ \cite[Lemma 3.1]{MR1163241}. Consequently Equation \eqref{stabilization.A} is equal to the stable formula $\tau(G) \sum_{\gamma_0} SO_{\gamma_0}(f^{\infty p} (\chi_c^{G(\qp)} f_\alpha) f_\infty)$.

By the argument at \cite[Lemma 4.1]{MR1163241} the above stable formula is the geometric side of the trace formula for the group $G$ and the function $\chi_c^{G(\qp)} f$;  therefore it is equal to the trace of $\chi_c^{G(\qp)} f$ on the space of automorphic forms $\cA(G)$ on $G$.  We have obtained that $T_B(f^{p}, \alpha)$ equals $\Tr(\chi_c^{G(\qp)} f, \cA(G))$ for all $\alpha \geq \alpha_0$. 
\end{proof}

\begin{definition}\label{steinbergtypedef}
We call a smooth representation $\pi_p$ of $G(\qp)$ of \emph{Steinberg type} if the following two conditions hold: (1) For all $F^+$-places $\wp$ above $p$ we have 
$$
\pi_\wp = \begin{cases}
\St_{\Gl_{n}(F^+_{\wp})}\otimes \phi_\wp & \wp \in \Ram_p \cr
\textup{Generic unramified} & \wp \in \Unr_p
\end{cases}
$$
where $\phi_\wp$ is an unramified character. (2) The factor of similitude $\qp^\times$ of $G(\qp)$ acts through an unramified character on the space of $\pi_p$.  
\end{definition}

\begin{lemma}\label{propertyA}
Let $\pi$ be an automorphic representation of $G$. Then $\pi$ is one-dimensional if the component $\pi_{\wp}$ is one-dimensional for some $F^+$-place $\wp$ above $p$. 
\end{lemma}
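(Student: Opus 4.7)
The plan is to apply strong approximation for the derived group $G^{\textup{der}}$, exploiting that $G/G^{\textup{der}}$ is abelian (via the similitude) and that $G^{\textup{der}}$ is isomorphic to the Weil restriction $\Res_{F^+/\Q} \mathrm{SU}(D)$ of a simply connected simple $F^+$-group.

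First I would observe that since $\pi_\wp$ is a one-dimensional representation of $G(F^+_\wp) = \Gl_n(F^+_\wp)$, it factors through the determinant, and hence the subgroup $H := G^{\textup{der}}(F^+_\wp) = \mathrm{SL}_n(F^+_\wp)$, embedded inside $G(\A)$ at the $\wp$-factor above $p$, acts trivially on $\pi$. I would then realize $\pi$ as a subspace of $\cA(G)$ and, for each $f \in \pi$ and each $g \in G(\A)$, introduce the auxiliary continuous function
$$
\phi_g \colon G^{\textup{der}}(\A) \to \C, \qquad \phi_g(h) := f(hg).
$$
The key computation is that $\phi_g$ is left-invariant under $G^{\textup{der}}(\Q) \subset G(\Q)$ (from the automorphic left-$G(\Q)$-invariance of $f$) and right-invariant under $H$: given $u \in H$, the normality of $G^{\textup{der}}$ in $G$ yields $u' := g_\wp^{-1} u g_\wp \in H$ with $ug = gu'$, so $\phi_g(hu) = f(hgu') = f(hg) = \phi_g(h)$ by right-$u'$-invariance of $f$.

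Next I would invoke strong approximation. Since $p$ splits in $\cK$, the local group $\mathrm{SU}(D)(F^+_\wp) \cong \mathrm{SL}_n(F^+_\wp)$ is non-compact, so strong approximation for $\mathrm{SU}(D)/F^+$ at $\wp$ implies that $G^{\textup{der}}(\Q) \cdot H$ is dense in $G^{\textup{der}}(\A)$. Combined with the bi-invariance and continuity of $\phi_g$, this density forces $\phi_g$ to be the constant function equal to $\phi_g(e) = f(g)$, and hence $f(hg) = f(g)$ for all $h \in G^{\textup{der}}(\A)$ and all $g \in G(\A)$; every $f \in \pi$ is thus left-invariant under $G(\Q) \cdot G^{\textup{der}}(\A)$. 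A short check using the normality of $G^{\textup{der}}$ in $G$ shows that the right action of $G(\A)$ on such functions factors through the abelian quotient $G(\A)/G^{\textup{der}}(\A)$, and since any irreducible admissible representation of an abelian locally compact group is a character, $\pi$ is one-dimensional.

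The hard part will be the bi-invariance step for $\phi_g$: one must convert the purely global left-$G(\Q)$-invariance of $f$ and the purely local right-$H$-invariance coming from the first step into a single bi-invariance statement on $G^{\textup{der}}(\A)$ in order to apply strong approximation. The transfer of $u$ past $g$ used in the verification of right-$H$-invariance relies essentially on the normality of $G^{\textup{der}}$ in $G$.
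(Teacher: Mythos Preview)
Your proof is correct and follows essentially the same strategy as the paper's: both realize $\pi$ inside $\cA(G)$, use that $G^{\textup{der}}=\Res_{F^+/\Q}SU$ is simply connected with $SU(F^+_\wp)\cong\mathrm{SL}_n(F^+_\wp)$ non-compact, and apply strong approximation to conclude that $G^{\textup{der}}(\A)$ acts trivially on $\pi$, forcing $\pi$ to factor through the abelian quotient. The only cosmetic differences are that the paper first twists to make $\pi_\wp$ trivial (unnecessary, as you observe, since one-dimensionality already kills $\mathrm{SL}_n$) and phrases the density argument via a compact open level subgroup rather than via your auxiliary functions $\phi_g$ and continuity; both packagings encode the same bi-invariance-plus-density step.
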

%\begin{proof} The Jacquet-Langlands correspondence and base change preserve one-dimensional representations. Therefore $\pi$ is one-dimensional if and only if $\Pi$ is one-dimensional. The representation $\Pi$ is an automorphic representation of the group $\A_\cK^\times \times \Gl_n(\A_F)$. Restrict $\Pi$ to $\Gl_n(\A_F)$, then $\Pi$ is an automorphic representation of $\Gl_n(\A_F)$. Let $v$ be an $F$-place. To prove the Lemma we must show that $\Pi_v$ is one-dimensional if and only if $\Pi$ is one-dimensional. This is well known, but we did not find a reference (with proof). We give the argument, but we leave the implication ``$\Leftarrow$'' to the reader. For ``$\Rightarrow$'' assume that $\Pi_v$ is one-dimensional. We may realize $\Pi$ inside the space of automorphic forms. Let $H \subset \Gl_n(\Af)$ be a compact open subgroup such that $\Pi^H \neq 0$. We may twist $\Pi$ with a character of $\A_F^\times$ such that the component $\Pi_v$ is the trivial representation. Pick an element $h \in \Pi^H$, then $h$ is a complex valued function on $\Gl_n(\A_F)$, invariant under the groups $\Gl_n(F), H$ and also the group $\Gl_n(F_v)$, because $\Pi_v$ is the trivial representation. By strong approximation for the group $\Sl_n$ we see that $\Sl_n(\A_F)$ acts trivially on $h \in \Pi^H$. Thus $\Sl_n(\A_F) \cal \Pi$ is the trivial representation. But then $\Pi$ is Abelian as $\Gl_n(\A_F)$ representation. Therefore $\Pi$ is one-dimensional. \end{proof}
\begin{proof} 
Assume $\pi_\wp$ is one-dimensional. By twisting $\pi$ with a character we may assume that $\pi_\wp$ is the trivial representation. Let $H \subset G(\Af)$ be a compact open subgroup such that $\pi^H \neq 0$. We embed $\pi$ in the space of automorphic forms on $G$. Then elements of $\pi$ are complex valued functions on $G(\A)$. The group $U \subset G$ is the unitary group of elements whose factor of similitude is trivial, and this group $U$ arises by restriction of scalars from a unitary group $U'$ over $F^+$. Let $SU$ be the derived group of $U'$. Then $SU$ is a simply connected algebraic group over $F^+$. We may restrict the automorphic representation $\pi$ of $G$ to obtain a representation of the group $SU(\A_{F^+})$ (which is reducible in general). Let $h \in \pi$ be an element, then $h$ is a complex valued function on $G(\A_{F^+})$ invariant under the groups $SU(F^+), H$ and also under the group $SU(F_\wp^+)$ because $\pi_\wp$ is the trivial representation. By strong approximation for the group $SU$ we see that $SU(\A_{F^+})$ acts trivially on $h \in \pi^H$. Thus $SU(\A_{F^+})$ acts trivially on the space $\pi$. Therefore $\pi$ is an Abelian automorphic representation of $G$ and thus one-dimensional.  
\end{proof}

\begin{proposition}\label{intpropB}
For all $\alpha \geq \alpha_0$ the trace $T_B(f^p, \alpha)$ is equal to
\begin{equation}\label{expression.AA}
\sum_{\bo{\pi \subset \cA(G)}{\dim(\pi) = 1, \pi_p=\textup{Unr}}} \Tr(\chi_c^{G(\qp)} f, \pi) + \sum_{\bo{\pi \subset \cA(G)}{\pi_p=\textup{ St. type}}} \Tr(\chi_c^{G(\qp)} f, \pi),
\end{equation}
where both sums range over the irreducible subspaces of $\cA(G)$. 
\end{proposition}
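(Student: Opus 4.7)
The plan is to start from the identity $T_B(f^p,\alpha)=\Tr(\chi_c^{G(\qp)}f,\cA(G))$ established in the previous proposition, decompose the right-hand side into a sum over irreducible subrepresentations $\pi\subset\cA(G)$, and identify exactly those $\pi$ with a non-vanishing contribution. Since $\chi_c^{G(\qp)}$ truncates only the $p$-component of $f$, each term $\Tr(\chi_c^{G(\qp)}f,\pi)$ factors as a product of a local trace $\Tr(\chi_c^{G(\qp)}f_\alpha,\pi_p)$ with traces at the other places, so the problem reduces to classifying the admissible $\pi_p$ at the $p$-component.

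The first step is to observe that $f_\alpha$ is spherical by Proposition~\ref{kottwitzfunctionexplicit}, so Corollary~\ref{chifpss}, applied to each factor $\Gl_n(F_\wp^+)$ in the decomposition $G(\qp)=\qp^\times\times\prod_\wp\Gl_n(F_\wp^+)$, forces each $\pi_\wp$ to be Iwahori-spherical (semi-stable) for the local $p$-trace not to vanish. The second step is to invoke Theorem~\ref{temperedargument} of Moeglin--Waldspurger, after transferring $\pi$ via cyclic base change from $\cK/\Q$ combined with Jacquet--Langlands (applicable because $D$ splits above $p$, since $p$ splits in $\cK$) to a discrete automorphic representation on a general linear group. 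This pins each $\pi_\wp$ down to be a semi-stable rigid representation of $\Gl_n(F_\wp^+)$.

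The third step is the combinatorial input from Section~\ref{sectioncombinatorics}. For $\wp\in\Ram_p$, the local Kottwitz function is a convolution $\prod_{v\in V_\alpha(\wp)}f_{n\alpha_v s_v}$, which is spherical and homogeneous of total degree $\alpha s_\wp$; invoking Proposition~\ref{vanishunlesssteinberg} (which applies since $\gcd(s_\wp,n)=1$ by hypothesis and one may arrange $\gcd(\alpha,n)=1$ by a polynomial interpolation in $q^\alpha$, both sides of the target identity being Laurent polynomials in $q^\alpha$) forces $\pi_\wp$ to be either the trivial representation or an unramified twist of the Steinberg representation. For $\wp\in\Unr_p$ we have $s_\wp=0$ and the local factor reduces to the unit $\one_{K_\wp}$, so $\pi_\wp$ must be unramified; among semi-stable rigid unramified representations only the full (i.e.\ generic) unramified principal series has a non-vanishing compact trace against $\one_{K_\wp}$, the non-generic Langlands quotients producing cancelling contributions in the Jacquet-module expansion of Proposition~\ref{clozelwaldspurger}. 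The $\qp^\times$-factor of $f_\alpha$, supported on $q^{-\alpha}\Zp^\times$, further forces the central character of $\pi$ at $p$ to be unramified.

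Finally, Lemma~\ref{propertyA} upgrades the local dichotomy to a global one: if $\pi_\wp$ is one-dimensional at any single $\wp\in\Ram_p$, then $\pi$ is globally one-dimensional (and automatically unramified at $p$); otherwise $\pi_p$ is of Steinberg type in the sense of Definition~\ref{steinbergtypedef}. Summing over the two resulting alternatives gives exactly Equation~\eqref{expression.AA}. The main obstacle is the base change / Jacquet--Langlands step bringing Moeglin--Waldspurger into play: one must carefully match local components on the similitude group $G$ attached to the division algebra $D$ with those of the transferred representation on the general linear group over $F$, so that the local classification from Theorem~\ref{temperedargument} applies at the places above $p$; once that transfer is in place, the rest of the argument is a clean combination of Corollary~\ref{chifpss}, Proposition~\ref{vanishunlesssteinberg}, and Lemma~\ref{propertyA}.
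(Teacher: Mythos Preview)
Your overall architecture matches the paper's: reduce to $\Tr(\chi_c^{G(\qp)}f,\cA(G))$, force semi-stability via Corollary~\ref{chifpss}, transfer by base change and Jacquet--Langlands to a discrete automorphic representation $\Pi$ of $\Res_{\cK/\Q}\Gm\times\Res_{F/\Q}\Gl_{n,F}$, apply Theorem~\ref{temperedargument} to get semi-stable rigidity, and then invoke Proposition~\ref{vanishunlesssteinberg} at the places $\wp\in\Ram_p$. Up to here you are essentially reproducing the paper's argument.

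The genuine gap is your treatment of the places $\wp\in\Unr_p$. You assert that among semi-stable rigid unramified representations only the generic unramified principal series has non-vanishing compact trace against $\one_{K_\wp}$. This is false: every element of $K_\wp=\Gl_n(\cO_{F_\wp^+})$ is compact, so $\chi_c^{\Gl_n(F_\wp^+)}\one_{K_\wp}=\one_{K_\wp}$ and $\Tr(\chi_c\,\one_{K_\wp},\pi_\wp)=\dim\pi_\wp^{K_\wp}$, which is non-zero for \emph{every} irreducible unramified $\pi_\wp$. In particular the rigid representation $\Ind_P^{G_n}\bigl(\one_{G_y}(\eps_1)\otimes\cdots\otimes\one_{G_y}(\eps_k)\bigr)$ with $y>1$ is unramified, is not generic, and has compact trace~$1$ against $\one_{K_\wp}$; there is no cancellation in Proposition~\ref{clozelwaldspurger}. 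The paper does not attempt a local argument here. Instead it first proves (its Lemma~\ref{propertyB}) that if $\pi$ is infinite-dimensional then the transferred $\Pi$ is \emph{cuspidal}: the dichotomy at $\wp\in\Ram_p$ together with Lemma~\ref{propertyA} forces $\pi_\wp$ to be an unramified twist of Steinberg at some finite place, and a discrete automorphic representation of $\Gl_n$ with a Steinberg local component is cuspidal by Moeglin--Waldspurger. Only \emph{then} does genericity at $\wp\in\Unr_p$ follow, by Shalika's theorem on local components of cuspidal representations. You have omitted this cuspidality step, and without it the $\Unr_p$ part of Definition~\ref{steinbergtypedef} cannot be verified.

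A smaller point: your polynomial-interpolation patch for the coprimality hypothesis in Proposition~\ref{vanishunlesssteinberg} is circular as written, since you do not yet know that the right-hand side of~\eqref{expression.AA} is a Laurent polynomial in $q^\alpha$ until after the contributing $\pi$ have been classified. The paper simply invokes Proposition~\ref{vanishunlesssteinberg} directly; if one wants to be careful, the point is that for the specific functions $f_{n\alpha_v s_v}$ the slope equalities in the proof of Proposition~\ref{chicfp} cancel the factor $\alpha_v$, so the vanishing of $\chi_c^G f_\wp^{(P)}$ for proper $P$ is governed by $\gcd(n,s_\wp)$ rather than by the full degree.
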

\begin{proof}
Fix throughout this proof an automorphic representation $\pi \subset \cA(G)$ of $G$ such that $\Tr(\chi_c^{G(\qp)} f, \pi) \neq 0$. We base change $\pi$ to an automorphic representation $BC(\pi)$ of the algebraic group $\cK^\times \times D^\times$. Here we are using that $D$ is a division algebra and therefore the second condition in Theorem A.3.1(b) of the Clozel-Labesse appendix in \cite{MR1695940} is satisfied (cf. \cite[\S VI.2]{MR1876802} and \cite{shinnote}). In turn we use the Jacquet-Langlands correspondence \cite{vigneras} (cf. \cite[\S VI.1]{MR1876802} and \cite{MR2390289}) to send $BC(\pi)$ to an automorphic representation $\Pi := JL(BC(\pi))$ of the $\Q$-group $G^+ = \Res_{\cK/\Q} \Gm \times \Res_{F, \Q} \Gl_{n,F}$. 

The transferred representation $\Pi$ is discrete and $\theta$-stable, which means that $\Pi$ is isomorphic to the representation $\Pi^\theta$ obtained from $\Pi$ by precomposition $G^+(\A) \to G^+(\A) \to \End_\C(\Pi)^\times$ with $\theta$. Because $\Pi$ is a subspace of the space of automorphic forms $\cA(G^+)$ it comes with a natural intertwining operator $A_\theta \colon \Pi \isomto \Pi^\theta$ induced from the action of $\theta$ on $\cA(G^+)$ (here we are using that multiplicity one is true for the discrete spectrum of $G^+$). The group $G^+(\qp)$ is isomorphic to $G(\qp) \times G(\qp)$ and the  representation $\Pi_p$ is isomorphic to $\pi_p \otimes \pi_p$. We have $\Tr(\chi_c^{G(\qp)} f_{\alpha}, \pi_p) \neq 0$. Therefore $\pi_p$ is semi-stable by Corollary \ref{chifpss}. For each $F^+$-prime $\wp$ the component $\pi_{\wp}$ is equal to a component $\Pi_{\wp'}$ for some (any) $F$-place $\wp'$ above $\wp$. As the representation $\Pi$ is a \emph{discrete} automorphic representation of the group $G^+(\A)$ the component $\pi_{\wp} = \Pi_{\wp'}$ is a \emph{semi-stable rigid} representation by the Moeglin-Waldspurger theorem (Theorem \ref{temperedargument}). 

We prove a lemma before finishing the proof of Proposition \ref{intpropB}.

\begin{lemma}\label{propertyB}
Assume that $\pi$ is infinite dimensional and that $\Tr(\chi_c^{G(\qp)} f_\alpha, \pi) \neq 0$. Then the transferred representation $\Pi$ is cuspidal.
\end{lemma}
\begin{proof}
We use the divisibility conditions on $n$ and $s_\wp$ to see that $\Pi$ is cuspidal: Because of these conditions, the Proposition \ref{vanishunlesssteinberg} implies that the component $\pi_\wp$ of $\pi$  at the prime $\wp$ is an unramified twist of either the trivial representation or of the Steinberg representation if $\wp$ lies in the set $\Ram_p$, \ie if the basic isocrystal is not \'etale at $\wp$. The trivial representation is not possible by the Lemma \ref{propertyA} and the assumption that $\pi$ is infinite dimensional. There is at least one $\wp$ such that $b_\wp$ is not \'etale (thus $\Ram_p \neq 0$), and therefore the discrete representation $\Pi$ is an unramified twist of the Steinberg representation at some finite $F^+$-place. By the Moeglin-Waldspurger classification of the discrete spectrum, the $G^+(\A)$-representation $\Pi$ must be cuspidal. 
\end{proof}

\noindent \textit{Continuation of the proof of Proposition \ref{intpropB}}. If the prime $\wp \in \Unr_p$ is such that the basic isocrystal at $\wp$ \emph{is} \'etale at $\wp$ then the function $\chi_c^{\Gl_n(F^+_\wp)} f_{\wp}$ is simply the unit of the spherical Hecke algebra, hence unramified, and therefore $\pi_{\wp}$ is an unramified representation; because $\pi_{\wp}$ occurs in a cuspidal automorphic representation of $G^+(\A)$ the representation $\pi_{\wp}$ is furthermore generic by the result of Shalika \cite{MR348047}. By Lemmas \ref{propertyA} and \ref{propertyB} there are the following possibilities for $\pi$. Either $\pi$ is one-dimensional and the component $\pi_p$ is unramified, or $\pi$ is infinite dimensional, and the component $\pi_p$ is of Steinberg type. We have proved that $T_B(f^p, \alpha)$ is equal to
\begin{equation}
\sum_{\bo{\pi \subset \cA(G)}{\dim(\pi) = 1}} \Tr(\chi_c^{G(\qp)} f, \pi) + \sum_{\bo{\pi \subset \cA(G)}{\pi_p=\textup{ St. type}}} \Tr(\chi_c^{G(\qp)} f, \pi),
\end{equation}
where both sums range over the irreducible subspaces of $\cA(G)$. \end{proof} 

The main theorem is now essentially established, we only need to expand the above sums slightly further using the calculations that we did in the first two sections.

We define a number $\zeta_{\pi_p} \in \C$ for the two types of representations at $p$ that occur in Equation \eqref{expression.AA}: those of Steinberg type and the one-dimensional, unramified representations.

\begin{definition}\label{zetapidef}
Assume that $\pi_p = \one(\phi_p)$ is unramified and one-dimensional. We define
\begin{equation}\label{zetapidefformula}
\zeta_{\pi_p} \bydef \phi_c(q) \prod_{\wp \in \Ram_p} \phi_\wp(q^{s_\wp}) \in \C^\times,
\end{equation}
where $\phi_c$ is the character by which the factor of similitude acts on the space of $\pi_p$. 
 Assume that $\pi_p$ is of Steinberg type. Then for all $\wp \in \Ram_p$ we have $\pi_\wp \cong \St_{\Gl_n(F^+_\wp)}(\phi_\wp)$ for some unramified character $\phi_\wp$ of $F_\wp^{+\times}$. Let $\phi_c$ be the character by which the factor of similitude of $G(\qp)$ acts on the space of $\pi_p$. We define $\zeta_{\pi_p}$ again by Equation \ref{zetapidefformula}. 
\end{definition}

\begin{definition}\label{theweightofzetapi}
Let $\pi$ be a $\xi$-cohomological automorphic representation of $G$. The center $Z$ of $G$ contains the torus $\G_{\textup{m}}$. We may precompose the central character $\omega_\pi$ of $\pi$ with the inclusion $\A^\times \subset Z(\A)$ to obtain a character $\A^\times \to \C^\times$.  
Let $w \in \Z$ be the unique integer such that the composition
\begin{equation}\label{weilqnumberdescription}
\A^\times \to \C^\times \overset{ |\cdot |} \to \R^\times_{>0}
\end{equation}
is the character $||\cdot ||^{w/2}$.  
\end{definition}

\begin{lemma}\label{weilqnumber}
Let $\pi$ be a $\xi$-cohomological automorphic representation of $G$ which is either unramified and one-dimensional, or of Steinberg type at $p$. Then $\zeta_\pi$ is a Weil-$q$-number of weight $w/n$.  
\end{lemma}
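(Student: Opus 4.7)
I would verify algebraicity and the correct archimedean absolute value of $\zeta_\pi$ separately in the two cases (one-dimensional unramified at $p$, and Steinberg type at $p$).

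\emph{Algebraicity.} Writing $q=p^f$, we have $\phi_c(q)=\phi_c(p)^f$ and $\phi_\wp(q^{s_\wp})=\phi_\wp(\varpi_\wp)^{f s_\wp}$. These are eigenvalues of $\pi_p$ under a spherical Hecke operator in the one-dimensional case and under an Iwahori-spherical Hecke operator in the Steinberg case. Since $\pi$ is $\xi$-cohomological, such eigenvalues lie in $\li \Q$ by the standard rationality properties of cohomological automorphic representations, hence $\zeta_\pi \in \li \Q$.

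\emph{Weight.} The central observation is that $\zeta_\pi^n$ equals $\pi_p(\mu(q))$ up to a finite-order contribution, where $\mu$ is the Shimura cocharacter; the factor $n$ reflects that the central character of each component $\pi_\wp$ on the center of $\Gl_n(F^+_\wp)$ equals $\phi_\wp^n$ (using that $\St_{\Gl_n}$ has trivial central character). In the one-dimensional case, $\pi$ factors through $G^{\mathrm{ab}}=\Gm$ via the factor of similitude, so $\pi = c^{-a}\otimes \chi_{\mathrm{fin}}$ for some integer $a$ and a finite-order Hecke character $\chi_{\mathrm{fin}}$. The cohomological condition determines $a$, and hence the integer $w$, directly from $\xi$, and evaluating $\pi_p(\mu(q))$ gives $|\zeta_\pi|=q^{w/(2n)}$ by a direct computation. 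In the Steinberg-type case, $\pi$ is infinite-dimensional and we transfer it via base change and Jacquet--Langlands (as in Lemma \ref{propertyB}) to a cuspidal cohomological representation $\Pi$ of $G^+(\A)$. Clozel's purity lemma applied to $\Pi$ (equivalently, the purity of the associated Galois representation of Harris--Taylor, Shin, and Chenevier--Harris) shows that the local Langlands parameters of $\Pi_p$, suitably normalized to account for the Steinberg $\delta_P^{1/2}$-shift, are pure Weil numbers of weight determined by $\xi_\infty$. Combining these local weights over the set $\{\wp\mid p\}$ with the multiplicities $s_\wp$, together with the contribution from $\phi_c(q)$, yields the weight $w/n$.

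\emph{Main obstacle.} The main difficulty is the careful bookkeeping of the normalization constants in the Steinberg case: reconciling the $\delta_P^{1/2}$-shift coming from the Steinberg representation, the contribution of the Shimura cocharacter $\mu$ (which carries the exponents $s_\wp$), and the archimedean weight of $\xi$, so that the final sum of weights is exactly $w/n$ and not some shift thereof. The identity $\zeta_\pi^n=\pi_p(\mu(q))$ (modulo finite-order twists) is what ultimately reconciles the factor $1/n$ with the definition of $w$ via the scalar embedding $\Gm\subset Z$.
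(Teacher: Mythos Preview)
Your proposal would work in outline, but it takes a substantially heavier route in the Steinberg case than the paper does. The paper's argument is uniform across both cases and relies only on the central character $\omega_\pi$. The key observation (which you also make) is that $\phi_\wp^n$ is the central character of $\pi_\wp$, so $\zeta_\pi^n$ is a value of the Gr\"ossencharakter $\omega_\pi$ at a point of $Z(\A)$. The paper then pulls $\omega_\pi$ back along the isogeny $U_Z\times\Gm\to Z$, where $U_Z\subset F^\times$ is the norm-one subtorus. Because $U_Z(\Q)\backslash U_Z(\A)$ is \emph{compact}, the $U_Z$-part contributes a Weil-$q$-number of weight $0$; the $\Gm$-part contributes weight $w$ by the very definition of $w$ (Definition~\ref{theweightofzetapi}). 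This completely avoids the purity input (Clozel's lemma, or the Galois representations of Harris--Taylor and Shin) that you invoke in the Steinberg case, and it dissolves the ``careful bookkeeping'' you flag as the main obstacle: once one passes to the central character there is no $\delta_{P_0}^{1/2}$-shift to track, since $\St_{\Gl_n}$ has trivial central character.

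Two smaller points. First, $G^{\mathrm{ab}}$ is not $\Gm$: it is a torus of dimension $[F^+:\Q]+1$ (isogenous to $Z$), so a one-dimensional $\pi$ does not factor through the similitude $c$ alone; this is exactly why one needs the $U_Z$-factor even in the one-dimensional case. Second, the expression $\pi_p(\mu(q))$ is only heuristic in the Steinberg case, since $\mu(q)$ is not central; the precise statement is the central-character identity above.
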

\begin{proof} Let $\phi_\wp$ be the character of $\Gl_n(F_\wp^+)$ as defined in Equation \ref{zetapidef}. Let $\omega_\pi$ be the central character of $\pi$. Then $\omega_{\pi, \wp} = \phi_\wp^n$ for all $\wp \in \Ram_p$, and at the factor of similitude of $G(\qp)$ we have $\omega_{\pi, c} = \phi_{c}^n$. Thus, the number $\zeta_{\pi_p}$ is an $n$-th root of the number 
\begin{equation}\label{etaformule}
\eta_{\pi_p} := \omega_c(q) \prod_{\wp \in \Ram_p} \omega_{\pi, \wp}(q^{s_\wp}) \in \C^\times. 
\end{equation}
Thus, to prove that $\zeta_\pi$ is Weil-$q$-number, it suffices to prove that $\eta_{\pi_p}$ is a Weil-$q$-number. The central character $\omega_\pi$ is a Gr\"ossencharakter of the center $Z \subset G$. The center $Z$ of $G$ is the set of elements $z \in F^\times \subset D^\times$ such that the norm of $z$ down to $F^{+ \times}$ lies in the subset $\Q^\times \subset F^{+\times}$. Because $\pi$ is $\xi_\C$ cohomological we have $\omega_{\pi, \infty} = \xi^{-1}_\C|_{Z(\R)}$. Let $U_Z \subset Z$ be the subtorus consisting of elements in $F^\times$ whose norm to $F^{+\times}$ is equal to $1$. We have an exact sequence $\mu_2 \injects U_Z \times \G_{\textup{m}, \Q} \surjects Z$ of algebraic groups over $\Q$, where the injection is the  embedding on the diagonal and the surjection is the multiplication map $\varphi \colon (u, x) \mapsto ux$. We may restrict the character $\omega_\pi$ of $Z(\A)$ to the group $U_Z(\A) \times \A^\times$ and we obtain in this manner a character $\omega_{\pi,1}$ of $U_Z(\A)$ and a character $\omega_{\pi,2}$ of $\A^\times$. %Of course, not every pair of characters $(\phi_1, \phi_2)$ comes from a character of $C(\A)$. There are two obstructions: First there is a condition on the signs coming from the  subgroup $\mu_2(\A)$ of the diagonal and second  the character should extend from the image of $U(\A) \times \A^\times$ in $C(\A)$ to the full group $C(\A)$. 

The component at $p$ of the mapping $\varphi \colon U_Z(\A) \times \A^\times \to Z(\A)$ is the identity mapping
$$
U_Z(\qp) \times \qp^\times = F_{\qp}^{+\times} \times \Qp^\times \to F_{\qp}^{+\times} \times \Qp^\times = Z(\Qp). 
$$
Let $K_1 \times K_2 \subset U(\A) \times \A^\times$ be a compact open subgroup such that $\omega_{\pi,1}$ is $K_1$-spherical and such that $\omega_{\pi, 2}$ is $K_2$-spherical. The group $U(\Q) K_1 \backslash U(\A)$ is \emph{compact} and therefore the product 
$$
\prod_{\wp \in \Ram_p} \omega_{\pi, \wp}(q^{s_\wp}) \in \C^\times, 
$$
(cf. Equation \eqref{etaformule}) is a Weil-$q$-number of weight $0$. The group $\Q^\times K_2 \backslash \A^\times$ is non-compact and thus $\omega_{\pi, c}(q)$ is a Weil-$q$-number whose weight is $w$, where $w$ is defined in Definition \ref{theweightofzetapi}. This completes the proof.  
\end{proof}

\begin{definition}\label{polP}
We write $P(q^\alpha)$ for the trace $\Tr_c(\chi_c^{G(\qp)} f_\alpha, \one)$. 
\end{definition}

In general $P(q^\alpha)$ is not a polynomial in $q^\alpha$, it depends on $\alpha$ in the following manner. The explicit description of the function $f_\alpha$ from Proposition \ref{kottwitzfunctionexplicit} shows 
\begin{equation}\label{explicitone}
P(q^\alpha) = \prod_{\wp \in \Ram_p} \Tr\lhk \chi_c^{\Gl_n(F_\wp^+)} \prod_{v \in V(\wp)} f_{n \alpha_v s_v}^{\Gl_n(F_\wp^+)} \ , \one \rhk.
\end{equation}
The traces in the product in Equation \eqref{explicitone} are computed in Subsection \ref{trivialrepresentation} (see Proposition \ref{cttrivial}). 

\begin{remark} 
In general the function $P(q^\alpha)$ is not a polynomial in $q^\alpha$. The number $c_{\wp, \alpha}$ depends on the class of $\alpha$ in the group $\Z/M\Z$, where $M$ is large such that the algebra $F^+ \otimes E_{\p, M}$ is isomorphic to a product of copies of $E_{\p, M}$. For the $\alpha$ that range over the elements of a fixed class $\li c \in \Z/M\Z$ there exists a polynomial $\Pol_{\li c} \in \C[X]$ such that $P(q^\alpha) = \left. \Pol_{\li c} \right|_{X = q^\alpha}$. 
\end{remark}

\begin{theorem}\label{maintheorem}
The trace of the correspondence $f^p \times \Phi_\p^\alpha$ acting on the alternating sum of the cohomology spaces
$\sum_{i=0}^\infty (-1)^i \uH_{\et}^i(B_{\lfq}, \iota^* \cL)$ is equal to 
\begin{equation}\label{introBBc}
P(q^\alpha) \lhk \sum_{\bo{\pi \subset \cA(G)}{\dim(\pi) = 1,\pi_p=\textup{unr}}}  \zeta_{\pi}^{\alpha} \cdot \Tr(f^p, \pi^p) + (-1)^{(n-1) \cdot \# \Ram_p} \sum_{\bo{\pi \subset \cA(G)}{\pi_p=\textup{ St. type}}} \zeta_{\pi}^{\alpha} \cdot \Tr(f^p, \pi^p)\rhk. 
\end{equation}
for all positive integers $\alpha$. 
\end{theorem}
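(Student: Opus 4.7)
The plan is to work from Proposition \ref{intpropB}, which identifies $T_B(f^p,\alpha)$ with $\Tr(\chi_c^{G(\qp)} f,\cA(G))$ and expresses the latter, for $\alpha\geq \alpha_0$, as a sum over the one-dimensional unramified $\pi\subset \cA(G)$ together with the $\pi\subset \cA(G)$ whose $p$-component is of Steinberg type. Since the truncation $\chi_c^{G(\qp)}$ affects only the $p$-component, the trace on each $\pi$ factors as
\[
\Tr(\chi_c^{G(\qp)}f,\pi) = \Tr(f^p,\pi^p)\cdot \Tr(\chi_c^{G(\qp)} f_\alpha,\pi_p)\cdot \Tr(f_\infty,\pi_\infty).
\]
The infinite factor is absorbed in the fact that only $\xi$-cohomological $\pi$ contribute (by the Clozel-Delorme construction of $f_\infty$), so my task reduces to showing that the remaining $p$-local trace equals $P(q^\alpha)\cdot \zeta_\pi^\alpha$ in the first sum and $(-1)^{(n-1)\#\Ram_p}P(q^\alpha)\cdot \zeta_\pi^\alpha$ in the second, and then to extending the resulting identity from $\alpha\geq \alpha_0$ to all $\alpha\geq 1$.

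The explicit description of $f_\alpha$ in Proposition \ref{kottwitzfunctionexplicit} decomposes the $p$-local compact trace along the factor of similitude and the primes $\wp|p$. The similitude factor contributes $\phi_c(q)^\alpha$, since its function is $\one_{q^{-\alpha}\Zp^\times}$. At each $\wp\in\Unr_p$ the signature is zero, so the relevant Kottwitz function is the identity of the spherical Hecke algebra and its compact trace against an unramified $\pi_\wp$ is $1$. For a one-dimensional unramified $\pi_p$ and $\wp\in \Ram_p$, the component $\pi_\wp$ is the character $\phi_\wp\circ\det$; Lemma \ref{normofsupport} pins down the support of $\prod_v f_{n\alpha_v s_v}$ to $|\det|=q^{-\alpha s_\wp}$, producing the factor $\phi_\wp(q^{s_\wp})^\alpha$, while the rest of the local trace is $\Tr(\chi_c^{\Gl_n(F_\wp^+)}\prod_v f_{n\alpha_v s_v},\one)$. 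Taking the product over $\wp\in\Ram_p$ and the similitude yields the full Weil number $\zeta_\pi^\alpha$ of Definition \ref{zetapidef} together with $P(q^\alpha)$ by Equation \eqref{explicitone} and Definition \ref{polP}.

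For the Steinberg-type case the same reduction holds, except at each $\wp\in\Ram_p$ the local component is $\St_{\Gl_n(F_\wp^+)}(\phi_\wp)$. I apply Proposition \ref{cttrivial}, whose coprimality hypothesis $(n,s_\wp)=1$ is precisely the simplifying assumption on the basic isocrystal, to obtain $\Tr(\chi_c^{\Gl_n(F_\wp^+)}f,\St) = \eps_{P_0}\Tr(\chi_c^{\Gl_n(F_\wp^+)}f,\one)$. For $\Gl_n$ the quotient $A_{P_0}/A_{\Gl_n}$ has dimension $n-1$, so $\eps_{P_0}=(-1)^{n-1}$ at each such $\wp$, and the product over the $\#\Ram_p$ factors produces the announced sign $(-1)^{(n-1)\#\Ram_p}$. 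The unramified characters $\phi_\wp$ again reassemble into $\zeta_\pi^\alpha$ by the same mechanism, and the trivial-representation traces assemble into the same $P(q^\alpha)$. Combining the two contributions gives Equation \eqref{introBBc} for all $\alpha\geq \alpha_0$.

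Finally, both sides are exponential polynomials in $\alpha$: the left-hand side is a finite alternating sum of $\alpha$-th powers of Frobenius eigenvalues on $\uH^i_\et(B_{\lfq},\iota^*\cL)$, and the right-hand side is by Lemma \ref{weilqnumber} and Equation \eqref{explicitone} a finite $\C$-linear combination of $\alpha$-th powers of products of fixed Weil $q$-numbers. Two exponential polynomials that agree for all $\alpha\geq \alpha_0$ agree for every positive integer $\alpha$, which closes the argument. The principal technical obstacle throughout is the explicit computation of the local compact traces at $\wp\in \Ram_p$, where the description of the Kottwitz functions must be combined with the combinatorial machinery of Section \ref{sectioncombinatorics} to produce the clean factorisation $P(q^\alpha)\cdot \zeta_\pi^\alpha$ and, for the Steinberg-type case, the correct global sign; once that computation is in hand the remainder is bookkeeping and an exponential-polynomial interpolation.
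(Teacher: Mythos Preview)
Your proof is correct and follows essentially the same route as the paper: start from Proposition~\ref{intpropB}, factor each term as $\Tr(f^p,\pi^p)\cdot\Tr(\chi_c^{G(\qp)}f_\alpha,\pi_p)$, compute the local factor using Proposition~\ref{kottwitzfunctionexplicit} together with Proposition~\ref{cttrivial} (yielding $\zeta_\pi^\alpha P(q^\alpha)$ and the sign $(-1)^{(n-1)\#\Ram_p}$), and then extend from $\alpha\geq\alpha_0$ to all $\alpha>0$. Your justification of this last step via exponential polynomials is in fact more explicit than the paper's, which simply asserts that the identity ``must then be true for all $\alpha>0$''.
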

\begin{proof} Assume that $\alpha \geq \alpha_0$. In Proposition \ref{intpropB} we established that 
$$
T_B(f^p, \alpha) = \sum_{\bo{\pi \subset \cA(G)}{\dim(\pi) = 1, \pi_p=\textup{Unr}}} \Tr(\chi_c^{G(\qp)} f, \pi) + \sum_{\bo{\pi \subset \cA(G)}{\pi_p=\textup{ St. type}}} \Tr(\chi_c^{G(\qp)} f, \pi). 
$$
Let $\pi$ be an automorphic representations which contributes to one of the above two sums.  We have
$$
\Tr(\chi_c^{G(\qp)} f, \pi) = \Tr(\chi_c^{G(\qp)} f_\alpha, \pi_p) \Tr(f^p, \pi^p). 
$$
For $\pi_p$ there are two possibilities: (1) $\pi_p$ is one-dimensional, (2) $\pi_p$ is of Steinberg type. In the first case we have
$$
\Tr(\chi_c^{G(\qp)} f_\alpha, \pi_p) = \zeta_\pi^\alpha \cdot P(q^\alpha). 
$$
In the second case we have
$$
\Tr(\chi_c^{G(\qp)} f_\alpha, \pi_p) = \zeta_\pi^\alpha \cdot \prod_{\wp | p} \Tr(\chi_c^{\Gl_n(F_\wp^+)} f_\wp, \St_{\Gl_n(F_\wp^+)}). 
$$
By Proposition \ref{cttrivial} we have 
$ \Tr(\chi_c^{\Gl_n(F_\wp^+)} f_\wp, \St_{\Gl_n(F_\wp^+)}) = (-1)^{n-1} \Tr(\chi_c^{\Gl_n(F_\wp^+)} f_\wp, \one)$ and therefore
$$
\Tr(\chi_c^{G(\qp)} f_\alpha, \pi_p) = \zeta_\pi^\alpha \cdot (-1)^{(n-1) \cdot \#\Ram_p} \cdot P(q^\alpha).
$$
Thus Equation \eqref{introBBc} is true for all $\alpha \geq \alpha_0$; observe that it must then be true for all $\alpha > 0$. This completes the proof. 
\end{proof}

\section{Applications}

In this section we deduce two applications from our main theorem. We first deduce an expression for the zeta function of the basic stratum in terms of the cohomology of a complex Shimura variety. In the second application we deduce an explicit formula for the dimension of the variety $B/\Fq$.  

\subsection{The number of points in $B$}\label{application.points} Let $I_p \subset K_p$ be the standard Iwahori subgroup at $p$. We use Theorem \ref{maintheorem} to deduce a formula for the zeta funcation of $B$ in terms of the cohomology of the complex variety $\Sh_{K^pI_p}(\C)$.  

\begin{corollary}\label{complexcohomologydimension}
We have 
\begin{align}\label{complexcoheq}
\# B(\fqa) &= | \Ker^1(G, \Q) | \cdot P(q^\alpha) \cdot \lhk \sum_{\one(\phi_p)} \sum_{i=0}^\infty (-1)^i \zeta_\pi^\alpha \dim \uH^{i}(\Sh_{K^p I_p}(\C), \cL)[\one(\phi_p)] \right. \cr
+ & \left. (-1)^{(n-1)\#\Ram_p}\sum_{\pi_p \textup{ St. type}} \sum_{i=0}^\infty (-1)^i \zeta_\pi^\alpha \dim\uH^i(\Sh_{K^p I_p}(\C), \cL)[\pi_p^{I_p}] \rhk
\end{align}
for all positive integers $\alpha$. The numbers $\zeta_\pi$ are roots of unity whose order is bounded by $n \cdot \# \lhk Z(\Q) \backslash Z(\Af) / (K \cap Z(\Af)) \rhk$.
\end{corollary}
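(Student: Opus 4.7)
The plan is to apply Theorem~\ref{maintheorem} with $\xi$ the trivial representation (so that $\cL$ is the constant $\lql$-sheaf) and with $f^p = \one_{K^p}/\vol(K^p)$, the normalized characteristic function which acts as the identity operator on $K^p$-invariants of any smooth $G(\Af^p)$-module. For these choices the Lefschetz--Verdier trace formula, combined with the pseudo-stabilization already carried out in the proof of Proposition~\ref{intpropB}, identifies $\#B(\fqa) = |\ker^1(\Q, G)| \cdot T_B(f^p, \alpha)$, and Theorem~\ref{maintheorem} then immediately produces the prefactor $|\ker^1(\Q, G)| \cdot P(q^\alpha)$ together with the two sums over automorphic representations.

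To rewrite the automorphic side in terms of the complex Shimura variety I invoke Matsushima's formula
\begin{equation*}
\uH^i(\Sh_{K^p I_p}(\C), \cL) \cong \bigoplus_\pi m(\pi) \cdot \pi_f^{K^p I_p} \otimes \uH^i(\ig, K_\infty; \pi_\infty \otimes \xi),
\end{equation*}
together with the identity $\Tr(f^p, \pi^p) = \dim (\pi^p)^{K^p}$ and the defining Clozel--Delorme property $\Tr(f_\infty, \pi_\infty) = \sum_i (-1)^i \dim \uH^i(\ig, K_\infty; \pi_\infty \otimes \xi)$ of the pseudocoefficient at infinity. Aggregating all automorphic $\pi$ that share a common component $\pi_p$ at $p$, the product $m(\pi) \cdot \dim(\pi^p)^{K^p} \cdot \sum_i (-1)^i \dim \uH^i(\ig, K_\infty; \pi_\infty)$ reassembles into $\sum_i (-1)^i \dim \uH^i(\Sh_{K^p I_p}(\C), \cL)[\pi_p^{I_p}]$; in the one-dimensional unramified case the Iwahori-invariants reduce to the one-dimensional character $\one(\phi_p)$, which accounts for the first sum in Equation~\eqref{complexcoheq}. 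The overall sign $(-1)^{(n-1)\#\Ram_p}$ is inherited directly from Theorem~\ref{maintheorem}.

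For the final claim that each $\zeta_\pi$ is a root of unity of the asserted order, Lemma~\ref{weilqnumber} states that $\zeta_\pi$ is a Weil-$q$-number of weight $w/n$ with $w$ as in Definition~\ref{theweightofzetapi}. Since $\xi$ is trivial in the setting of the corollary, the central character $\omega_\pi$ has trivial Archimedean component and consequently $w = 0$, so every complex absolute value of $\zeta_\pi$ equals $1$; Kronecker's theorem then identifies $\zeta_\pi$ as a root of unity. For the order, Equation~\eqref{etaformule} gives $\zeta_\pi^n = \omega_{\pi,c}(q) \prod_{\wp \in \Ram_p} \omega_{\pi,\wp}(q^{s_\wp})$, which is the value of $\omega_\pi$ on an explicit adelic element of $Z(\Af)$. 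Because $\pi$ occurs at level $K$ and $\omega_{\pi,\infty}$ is trivial, $\omega_\pi$ factors through the finite group $Z(\Q) \backslash Z(\Af) / (K \cap Z(\Af))$; letting $N$ be its cardinality we obtain $\omega_\pi^N = 1$, and hence $\zeta_\pi^{nN} = 1$. The principal technical obstacle is the Matsushima bookkeeping in the middle step, where the aggregation of automorphic multiplicities over representations sharing a common local component at $p$ must be carefully identified with the isotypic decomposition of the Iwahori-level cohomology under the local Iwahori--Hecke algebra.
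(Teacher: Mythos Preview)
Your proposal is correct and follows essentially the same approach as the paper: specialize Theorem~\ref{maintheorem} to $\xi$ trivial and $f^p=\one_{K^p}$, use the Lefschetz trace formula to obtain $\#B(\fqa)$, apply Matsushima's formula to pass to the cohomology of the complex Shimura variety at Iwahori level, and deduce the root-of-unity claim from the vanishing of the Archimedean central character together with the argument of Lemma~\ref{weilqnumber}. One small remark: the appeal to Kronecker's theorem is superfluous, since your own subsequent argument that $\omega_\pi$ factors through the finite group $Z(\Q)\backslash Z(\Af)/(K\cap Z(\Af))$ already yields $\zeta_\pi^{nN}=1$ directly, without needing to know beforehand that $\zeta_\pi$ is an algebraic integer.
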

\begin{proof}
Take $f^{\infty p} = \one_{K^p}$ and $\xi_\C$ the trivial representation of $G_\C$. By the Grothendieck-Lefschetz trace formula, the Theorem \ref{maintheorem} provides an expression for the cardinal $\# B(\fqa)$ for all positive integers $\alpha$:
\begin{align}\label{bpoints}
\# B(\fqa) = & P(q^\alpha) \lhk \sum_{\bo{\pi \subset \cA(G)}{\dim(\pi) = 1, \pi_\infty = \one}}  \zeta_{\phi}^{\alpha} \cdot \dim \lhk \pi^p \rhk^{K^p}
\rhk + \cr 
& + (-1)^{(n-1)\# \Ram_p} P(q^\alpha) \lhk \sum_{\bo{\pi \subset \cA(G)}{\pi_p=\textup{ St. type}}} \zeta_{\pi}^{\alpha} \cdot \ep(\pi_\infty) \dim \lhk \pi^p_{\textup{f}} \rhk^{K^p} \rhk, 
\end{align}
where $\ep(\pi_\infty)$ is the Euler-Poincar\'e characteristic $\sum_{i=0}^\infty (-1)^i \dim \uH^i(\ig, K_\infty; \pi_\infty)$. The representation $\xi$ at infinity is trivial; therefore the component at infinity of the central character of any  automorphic representation contributing to the sums in Equation \eqref{bpoints} is trivial as well. Thus the numbers $\zeta_{\pi}^{\alpha} \in \C^\times$ are roots of unity. The first part of the statement now follows from the formula of Matsushima \cite[Thm. VII.3.2]{MR1721403}. The bound on the order of the roots of unity $\zeta_{\pi}$ follows from the proof of Lemma \ref{weilqnumber}. 
\end{proof}

\begin{remark}
Note that $\# B(\fqa)$ is (for sufficiently divisible $\alpha$) a sum of powers of $q^\alpha$. This suggests that $B$ may have a decomposition in affine cells as in the case of signatures $(n-1, 1); (n, 0), \ldots (n,0)$. 
\end{remark}

\begin{example} %todo
Assume, on top of the running conditions from subsection 3.1, that $D$ is a quaternion algebra over its center $F$ and that this center is quadratic imaginary over $\Q$. In this case $E = \Q$ and $U(\R)$ is the standard unitary group $U(1, 1)$. The variety $\Sh_K$ is a smooth projective curve, and the basic stratum $B \subset \Sh_{K, \fp}$ is a finite \'etale variety. If we take $\alpha$ sufficiently divisible then we have $B(\lfp) = B(\fpa)$. After simplifying, Corollary \ref{complexcohomologydimension} states that the cardinal $\# B(\lfp)$ equals $2 \cdot \ep(\Sh_K(\C)) - \ep(\Sh_{K^pI_p}(\C))$, where $\ep(X)$ is the Euler-Poincar\'e characteristic of the complex curve $X$: 
$$
\ep(X) \bydef \dim \uH^0(X, \Q) - \dim \uH^1(X, \Q) + \dim \uH^2(X, \Q). 
$$ 
This suggests a result for $\Sh_K$ similar to the geometric description of Deligne and Rapoport \cite{MR0337993} of the reduction modulo $p$ of the modular curve $X_0(p)$.  
\end{example}

\subsection{A dimension formula}\label{application.dimension} In this subsection we show that the dimension of the variety $B/\Fq$ can be deduced from Corollary \ref{complexcohomologydimension}. The strategy is to look for the highest order terms in the combinatorial polynomials that describe the compact traces on the representations that occur in the cohomology of $B$.  

\begin{proposition}
The dimension of the variety $B/\Fq$ is equal to 
$$
\sum_{\wp \in \Ram_p} \lhk \sum_{v \in V(\wp)} \frac {s_v(1-s_v)}2 + \sum_{j = 0}^{s_\wp - 1} \lceil j \frac n{s_\wp} \rceil \rhk. 
$$
\end{proposition}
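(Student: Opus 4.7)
The plan is to extract $d := \dim B$ from the top $q^\alpha$-degree of the point-counting formula in Corollary \ref{complexcohomologydimension}. Since $B/\fq$ is projective of dimension $d$, we have $\#B(\fqa) = O(q^{\alpha d})$ with non-vanishing leading term. Restrict $\alpha$ to an arithmetic progression on which $\gcd(\alpha, n) = 1$ and on which all the roots of unity $\zeta_\pi^\alpha$ in \eqref{complexcoheq} stabilize (possible by the order bound of Lemma \ref{weilqnumber}); then the bracketed sum becomes a non-zero constant in $\alpha$, since $\pi = \one$ has $\zeta_\one = 1$, sits in the first sum, and contributes $\dim \uH^0 = 1$. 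Consequently $d$ equals $\deg_{q^\alpha} P(q^\alpha)$ along this progression.

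I would then compute $\deg_{q^\alpha} P(q^\alpha)$ from the factorization \eqref{explicitone}. The factors at $\wp \in \Unr_p$ are trivial. At each $\wp \in \Ram_p$, the convolution $f_{\wp, \alpha} := \prod_{v \in V_\alpha(\wp)} f_{n\alpha_v s_v}$ is spherical of degree $\alpha s_\wp$ coprime to $n$, so Propositions \ref{cttrivial} and \ref{ctsteinberg} give
$$\Tr(\chi_c^{\Gl_n(F_\wp^+)} f_{\wp,\alpha}, \one) \;=\; \cS_T\bigl(\widehat{\chi}_{N_0}\, f_{\wp,\alpha}^{(P_0)}\bigr)(\varphi_{T, \delta_{P_0}^{1/2}}).$$
Multiplicativity of the Satake transform under convolution, together with Lemma \ref{constantterm} and Proposition \ref{glnobtuselemma}(\textit{ii}) applied iteratively, expands the right-hand side as a $q_\wp$-power-weighted sum of monomials $\prod_j X_j^{e_j}$ over tuples $(I_v)_{v \in V_\alpha(\wp)}$ with $|I_v| = s_v$ satisfying the obtuse condition $\sum_v \alpha_v |I_v \cap [1,i]| > \tfrac{\alpha s_\wp}{n} i$ for all $i < n$, where $e_j := \sum_v \alpha_v \one_{j \in I_v}$.

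Evaluating at $X_j = q_\wp^{(2j-n-1)/2}$ reduces the $q_\wp$-exponent of a single term to $\sum_v \alpha_v\bigl[\sum_{j \in I_v} j - \tfrac{s_v(s_v+1)}{2}\bigr]$, turning the task into an integer optimization problem. Coprimality $\gcd(n, \alpha s_\wp) = 1$ forces $\tfrac{\alpha s_\wp i}{n} \notin \Z$ for $1 \leq i < n$, so the obtuse constraints sharpen to $F(i) := \sum_{j \leq i} e_j \geq \lceil \tfrac{\alpha s_\wp i}{n} \rceil$. Since $\sum_j j e_j = n \alpha s_\wp - \sum_{i<n} F(i)$, the maximum is realized by the greedy profile $F(i) = \lceil \tfrac{\alpha s_\wp i}{n} \rceil$, attainable by a structured tuple via a packing argument that places the $\alpha_v$-chunks greedily from right to left. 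Combining with the Galois-invariance identity $\sum_{v \in V_\alpha(\wp)}\alpha_v h(s_v) = \alpha \sum_{\tilde v \in V(\wp)} h(s_{\tilde v})$ converts the per-$\wp$ contribution into a sum indexed by $V(\wp)$, and summing over $\wp \in \Ram_p$ yields the dimension formula.

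The main obstacle is the final arithmetic simplification, which requires a delicate identity involving sums of ceilings $\lceil jn/s_\wp \rceil$ and the quantities $s_v(s_v+1)/2$ to collapse the optimized exponent into the stated closed form, together with the correct absorption of the residue-degree ratio $[\kappa(F^+_\wp) : \fq]$ into the orbit combinatorics of $V_\alpha(\wp) \subset V(\wp)$. The uniqueness of the extremal profile, guaranteed by $\gcd(n, s_\wp) = 1$, is what allows one to read off this identification term by term; without this coprimality the extremal tuple would degenerate and a leading-coefficient analysis, not merely a leading-exponent analysis, would be needed.
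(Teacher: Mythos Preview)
Your overall strategy matches the paper's: read off $\dim B$ as the top $q^\alpha$-degree of $P(q^\alpha)$ in the point-count formula of Corollary~\ref{complexcohomologydimension}. But there is a genuine gap in your claim that the bracketed constant is nonzero. Observing that the trivial automorphic representation contributes $\dim\uH^0=1$ to the first sum does not exclude cancellation: the other one-dimensional $\pi$ and the Steinberg-type $\pi$ contribute signed Euler characteristics of isotypic pieces, and nothing you have written prevents these from summing to zero. In fact the vanishing of this constant is equivalent to $B=\emptyset$, so what you actually need is non-emptiness of the basic stratum, which is a nontrivial geometric input. The paper invokes Fargues' thesis for this, and sketches an alternative route via an existence theorem for automorphic representations together with strong multiplicity one to isolate a single contributing $\pi$ by a suitable choice of $f^p$.

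Your choice of arithmetic progression also makes the combinatorics harder than necessary and is the source of your self-identified ``main obstacle.'' You impose $\gcd(\alpha,n)=1$ so that the obtuse constraints sharpen to integer inequalities, and then must carry the general weights $\alpha_v$ through the optimization, leaving a packing argument and a residue-degree unwinding at the end. The paper instead takes $\alpha$ sufficiently \emph{divisible} so that $F^+\otimes E_{\p,\alpha}$ splits: then $V_\alpha(\wp)=V(\wp)$, every $\alpha_v$ equals $\alpha$, the convolution at $\wp$ becomes $\prod_{v\in V(\wp)} f_{n\alpha s_v}$, and the obtuse condition (Proposition~\ref{glnobtuselemma}) reduces to $t_1+\cdots+t_i > \tfrac{s_\wp}{n}\, i$ with $t_i$ a sum of $\#V(\wp)$ values in $\{0,1\}$. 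Since $\gcd(s_\wp,n)=1$ is already a standing hypothesis, the coprimality you sought from $\alpha$ is automatic, the greedy profile $t_1+\cdots+t_i=\lfloor \tfrac{s_\wp i}{n}\rfloor+1$ is visibly optimal and realizable (it is a $\{0,1\}$-sequence with exactly $s_\wp$ ones, which can be distributed among the $v\in V(\wp)$ arbitrarily), and the stated closed form drops out after evaluating at the Hecke matrix of $\delta_{P_0}^{1/2}$ without any further orbit bookkeeping or ceiling identities.
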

\begin{proof}
The Galois group $\Gal(\lfq/\fq)$ acts through a finite cyclic group on the set of geometric components of the variety $B/\Fq$. In particular the $\alpha$-th power of the Frobenius does not permute these components if $\alpha$ is sufficiently divisible, say divisible by $M \in \Z$ suffices. Assume from now on that $M$ divides $\alpha$. Then each irreducible component of the variety $B_{\F_{q^\alpha}}$ is a geometric component. Pick a component of maximal dimension and inside it a dense open affine subset. By Noether's normalization Lemma this affine subset is finite over an affine space $\A^d_{\F_{q^\alpha}}$ where $d$ is the dimension of $B$. Thus the number of $\F_{q^{\alpha}}$-points in $B$ is a certain constant times $q^{\alpha  d}$ plus lower order terms. From Equation \eqref{complexcohomologydimension} we obtain a formula of the form $\#B(\F_{q^{\alpha}}) = P(q^{\alpha}) \cdot C$ where $C$ is a complicated constant equal to a difference of dimensions of cohomology spaces. 

There are two ways to see that the constant $C$ is non-zero. First Fargues established in his thesis \cite{MR2074714} that the basic stratum is non-empty, and thus the constant $C$ is non-zero. Second, we sketch an argument for non-emptiness of $B$ using Theorem \ref{maintheorem}. Use an existence theorem of automorphic representations (for example \cite{MR818353}) to find after shrinking the group $K$ at least one automorphic representation $\pi$ of $G$ contributing to the sums in Theorem \ref{maintheorem}. By base change and Jacquet-Langlands we can send any such automorphic representation to an automorphic representation of the general linear group (plus similitude factor). By strong multiplicity one for $\Gl_n$ the contributing automorphic representations of $G$ are determined up to isomorphism by the set of local components outside any given finite set of places. Therefore we can find a Hecke operator $f^p$ which acts by $1$ on $\pi$ and by $0$ on all other automorphic representations contributing to Equation \eqref{bpoints} (which are \emph{finite} in number). The trace of the correspondence $f^{p} \times \Phi_\p^{r\alpha}$ acting on the cohomology of the variety $B$ is then certainly non-zero. In particular the variety has non-trivial cohomology and must be non-empty. Therefore the constant $C$ is non-zero. 

For the determination of the dimension we forget about the constant $C$. By increasing $M$ (and thus $\alpha$) if necessary we may (and do) assume that the $E_{\p, \alpha}$-algebra $F^+ \otimes E_{\p, \alpha}$ is split. Then, by Proposition \ref{kottwitzfunctionexplicit} we have 
$$
f_{\alpha} = \one_{q^{-\alpha} \zp^\times} \otimes \bigotimes_{\wp | p} \prod_{v \in V(\wp)} f_{n \alpha s_v}^{\Gl_n(F^+_\wp)} \in \cH_0(G(\qp)). 
$$

We make the formulas for the compact trace of $f_\alpha$ on the trivial representation and on the Steinberg representation explicit. Fix a $\wp$ and write  $f_\wp$ for the component of the function $f_\alpha$ at the prime $\wp$. Write $z := \# V(\wp)$. The Satake transform $\cS_T(\widehat \chi_{N_0} f_\wp^{(P_0)})$ is equal to the  polynomial
\begin{equation}\label{satakeoff}
q^{\alpha \sum_{v=1}^{z} \frac {s_v(n-s_v)}2} \sum_{(t_{1i}), (t_{2i}), \ldots, (t_{z i})} X_1^{\alpha (t_{11} + t_{21} + \ldots + t_{z 1})} X_2^{\alpha (t_{12} + t_{22} + \ldots + t_{z 2})} \cdots X_n^{\alpha (t_{1n} + t_{2n} + \ldots + t_{z n})} 
\end{equation}
in the ring $\C[X_1^{\pm 1}, X_2^{\pm 1}, \ldots, X_n^{\pm 1}]$. In the above sum, for an index $v$ given, the symbol $(t_{vi})$ ranges over the extended compositions of the number $s_v$ of length $n$ with the following properties: 
\begin{enumerate}
\item[(C1)] for each  index $i$ we have $t_{vi} \in \{0, 1\}$;
\item[(C1)] define for each $i$ the number $t_i$ to be the sum $t_{1i} + t_{2i} + \ldots + t_{z i}$, 
then we have 
\begin{equation}\label{explicitobtuseAA}
t_1 + t_2 + \cdots + t_i > \frac {s_\wp} n i,
\end{equation}
for every index $i \in \{1, 2, \ldots, n-1\}$. 
\end{enumerate}

The highest order term of the polynomial $P(q^{r\alpha})$ corresponds to extended composition $(t_i)$ of $s$ defined by the equalities
$$
t_1 + t_2 + t_3 + \ldots + t_i = \left \lfloor i \frac {s} n \right \rfloor + 1
$$ 
for all $i < n$. This extended composition gives the monomial
$$
q^{\alpha \sum_{v \in V(\wp)} \frac {s_v(n-s_v)}2} X_1^{\alpha} X_{\lceil \frac n{s_\wp} \rceil}^{\alpha} X_{\lceil 2 \frac n{s_\wp} \rceil}^{\alpha} \cdots X_{\lceil (s_\wp - 1) \frac n {s_\wp} \rceil}^{\alpha}
$$
of the truncated Satake function $\cS_T(\widehat \chi_{N_0} f^{(P_0)}_\wp) \in \C[X_*(T_\wp)]$. We evaluate this monomial at the Hecke matrix of the $T_\wp$-representation $\delta_{P_0}^{1/2}$ to obtain 
$$
q^{\alpha  \lhk \sum_{v \in V(\wp)} \frac {s_v(n-s_v)}2 + \sum_{j=0}^{s_\wp - 1} \frac {2 \lceil j \frac n {s_\wp} \rceil + 1 -n}2 \rhk } \in \C[q^{\alpha }].
$$
By summing over all $\wp \in \Ram_p$ we see that the dimension of the variety $B$ is equal to 
$$
\sum_{\wp \in \Ram_p} \lhk \sum_{v \in V(\wp)} \frac {s_v(1-s_v)}2 + \sum_{j = 0}^{s_\wp - 1} \lceil j \frac n{s_\wp} \rceil \rhk. 
$$
This completes the proof. 
\end{proof}

\bibliographystyle{plain}
\bibliography{grotebib}

\bigskip

\noindent Arno Kret \\
\noindent Universit\'e Paris-Sud, UMR 8628, Math\'ematique, B\^atiment 425, \\
\noindent F-91405 Orsay Cedex, France

\end{document}